\numberwithin{equation}{section}
\theoremstyle{plain}
\newtheorem{theorem}{Theorem}[section]
\newtheorem{proposition}[theorem]{Proposition}
\newtheorem{lemma}[theorem]{Lemma}
\newtheorem{claim}{Claim}
\newtheorem{definition}[theorem]{Definition}
\newtheorem{remark}[theorem]{Remark}
\title[Non-degeneracy and stability of half-harmonic maps]{Non-degeneracy and quantitative stability of half-harmonic maps from $\R$ to $\S$}
\author{Bin Deng}
\address{School of Mathematical Sciences, University of Science and Technology of China,
Hefei, Anhui Province, P.R. China, 230026}
\email{bingomat@mail.ustc.edu.cn}
\author{Liming Sun}
\address{Academy of Mathematics and Systems Science, the Chinese Academy of Sciences, Beijing 100190, China.}
\email{lmsun@amss.ac.cn}
\author{Jun-cheng Wei}
\address{Department of Mathematics, University of British Columbia, Vancouver, BC, V6T 1Z2, CA.}
\email{jcwei@math.ubc.ca}
\date{\today \,(Last Typeset)}
\subjclass[2020]{Primary 35B38; 35B06. Secondary 58E20}
\keywords{Fractional derivative, half-harmonic map, Blaschke product, stability, non-degeneracy}
\def\R{\mathbb{R}}
\def\S{\mathbb{S}}
\def\Rp{\mathbb{R}^2_+}
\def\D{\mathbb{D}}
\def\deg{\text{deg\,}}
\def\cn{\dot H^{1/2}(\S)}
\def\cnr{\dot H^{1/2}(\R)}
\def\cnS{\dot H^{1/2}(\S;\S)}
\def\cnRS{\dot H^{1/2}(\R;\S)}
\def\id{\text{id}_{\S}}
\def\DRh{(-\Delta_{\R})^{\frac12}}
\def\DSh{(-\Delta_{\S})^{\frac12}}
\def\Z{\mathbb{Z}}
\def\Im{\textup{Im\,}}
\def\Re{\textup{Re\,}}
\def\im{\textup{i\,}}
\def\u{\boldsymbol{u}}
\def\v{\boldsymbol{v}}
\def\l{\lambda}
\def\bphi{\boldsymbol{\phi}}
\def\bpsi{\boldsymbol{\psi}}
\def\K{\boldsymbol{K}_{\vartheta,\vec{\alpha}}}
\begin{document}

\maketitle
\begin{abstract}
We consider half-harmonic maps from $\mathbb{R}$ (or $\mathbb{S}$) to $\mathbb{S}$. We prove that all (finite energy) half-harmonic maps are non-degenerate. In other words, they are integrable critical points of the energy functional. A full description of the kernel of the linearized operator around each half-harmonic map is given. The second part of this paper devotes to studying the quantitative stability of half-harmonic maps. When its degree is $\pm 1$, we prove that the deviation of any map $\boldsymbol{u}:\mathbb{R}\to \mathbb{S}$ from M\"obius transformations can be controlled uniformly by $\|\boldsymbol{u}\|_{\dot H^{1/2}(\mathbb{R})}^2-\deg \boldsymbol{u}$. This result resembles the quantitative rigidity estimate of degree $\pm 1$ harmonic maps $\mathbb{R}^2\to \mathbb{S}^2$ which is proved recently.  Furthermore, we address the quantitative stability for half-harmonic maps of higher degree.  We prove that if $\boldsymbol{u}$ is already near to a Blaschke product, then the deviation of $\boldsymbol{u}$ to Blaschke products can be controlled by $\|\boldsymbol{u}\|_{\dot H^{1/2}(\mathbb{R})}^2-\deg \boldsymbol{u}$. Additionally, a striking example is given to show that such quantitative estimate can not be true uniformly for all $\boldsymbol{u}$ of degree 2. We conjecture similar things happen for harmonic maps $\R^2\to \S^2$.
\end{abstract}

\setlength{\parskip}{0.5em}
\section{Introduction}
\subsection{Motivation and main results}
The analysis of critical points of conformal invariant lagrangians has drawn much attention since 1950, due to their important applications in physics and geometry. One of the prominent examples is harmonic maps $\u:\Omega\to \mathbb{S}^n$, which are critical points of the Dirichlet energy
\begin{align}
    \mathcal{E}(\u)=\int_{\Omega}|\nabla \u|^2dx.
\end{align}
When the domain $\Omega$ is a subset of $\R^2$, $\mathcal{E}(\u)$ is conformally invariant and this plays a crucial role in the regularity theory of such maps (see \citet{helein1990regularite}, \citet{riviere2007conservation} and references therein). The theory has been generalized to even-dimensional domains whose critical are called poly-harmonic maps.

In the recent years, many authors are interested in the analog of Dirichlet energy in odd-dimensional
cases, for instance, \citet{DaLio2013,da2015compactness}, \citet{DaLioRiv2011-1,DaLioRiv2011-2}, \citet{millot2015fractional},
\citet{schikorra2012regularity} and the references therein. In these works, a special but quite interesting case is the so-called half-harmonic maps from $\R$ into $\S$ which can be defined as critical points of the following line energy
\begin{align}\label{intro:E(u)}
    \mathcal{E}(\u)=\frac{1}{2\pi}\int_{\R}|(-\Delta_{\R})^{\frac14}\u|^2dx.
\end{align}
The functional $\mathcal{E}$ enjoys  invariance under the M\"obius group which is the trace of conformal maps keeping invariant the half-space $\Rp$. In fact, $\mathcal{E}(\u)$ coincides with $\|\u\|_{\dot H^{1/2}(\R)}^2$, see \eqref{1.1} below. Computing the associated Euler-Lagrange equation for \eqref{intro:E(u)}, it is easy to see that if $\u:\R\to \S$ is a half-harmonic map, then $\u$ satisfies
\begin{align}\label{intro:E-L}
    (-\Delta)^{\frac{1}{2}} \u(x)=\left(\frac{1}{2 \pi} P.V.\int_{\mathbb{R}} \frac{|\u(x)-\u(s)|^{2}}{|x-s|^{2}} d s\right) \u(x) \quad\text { in } \mathbb{R}.
\end{align}
Fundamental regularity of half-harmonic maps has been obtained in \cite{DaLioRiv2011-1,DaLioRiv2011-2}. A complete classification has been known by \cite{millot2015fractional,da2015compactness} (cf. Theorem \ref{thm:class} below). Associating\footnote{Throughout this paper, bold font $\u$ denotes a vector-valued map, while $u$ means a complex-valued map.} $\u=(u_1,u_2)$ to a complex function $u=u_1+\im u_2$, all half-harmonic maps consist of the following products and their complex conjugates
\begin{align}\label{intro:1/2-H-form}
\psi_{\vartheta,\vec{\alpha}}:=e^{\im \vartheta} \prod_{k=1}^{d} \frac{x-\alpha_k}{x-\bar\alpha_k}
\end{align}
where $\vartheta\in \S$, $\vec{\alpha}=(\alpha_1,\cdots,\alpha_d)$ with
$\alpha_k=x_k+\im \l_k$ lies in the upper half plane $\mathbb{H}$, and $d=\deg \u$ (see \eqref{deg-series} for its definition). Modulo a Cayley transformation, the above expressions are equivalent to Blaschke products or their conjugates.

Apart from the strong analogy to harmonic maps on $\R^2$, half-harmonic maps have intricate connections to minimal surfaces with free boundary, for instance see \cite{fraser2016sharp,laurain2017regularity,millot2015fractional,jost2019qualitative}. On the other hand, in recent years, several papers were devoted to the study of the fractional Sobolev space $H^{1 / 2}$ with values into the circle, in particular in the framework of the Ginzburg-Landau model, see the paper \citet{mironescu2004variational} and reference therein. The simplest of such spaces is $H^{1 / 2}\left(\R ; \S\right)$.



The main purpose of this paper is twofold: First, we prove that each (finite energy)\footnote{According to \cite{enno2018energy}, we do have some maps $\u:\R\to \S$ satisfies \eqref{intro:E-L} but has infinite line energy. We do not study it here.} half-harmonic map is non-degenerate by characterising the kernel of the linearized operator around each half-harmonic map.
Second, we study the quantitative stability estimates near each half-harmonic map. The non-degeneracy and stability are crucial to the half-harmonic map heat flow, which is an analogy of harmonic map heat flow. There are vibrant researches along this direction by \citet{sire2017infinite}, \citet{Sch-Sire-Wang}, \citet{wettstein2021uniqueness}. An interesting conjecture in \cite{sire2017infinite} states that   half-harmonic map heat flow only blow-ups in infinite time, which is quite different from what we know about harmonic map heat flow.

Evidently, differentiating \eqref{intro:1/2-H-form} with $\vartheta$, $\Re \alpha_k$, $\Im \alpha_k$, for $k=1,\cdots,d$ respectively, generates kernel maps for the linearized operators $L_{\psi_{\vartheta,\vec{\alpha}}}$ as
\begin{align}
\begin{split}
    L_{\psi_{\vartheta,\vec{\alpha}}}(\v)=\DRh \v(x)&-\left(\frac{1}{2 \pi} P.V.\int_{\mathbb{R}} \frac{|\bpsi_{\vartheta,\vec{\alpha}}(x)-\bpsi_{\vartheta,\vec{\alpha}}(s)|^{2}}{|x-s|^{2}} d s\right)\v(x)\\
    &-\left(\frac{1}{\pi} \int_{\mathbb{R}} \frac{(\bpsi_{\vartheta,\vec{\alpha}}(x)-\bpsi_{\vartheta,\vec{\alpha}}(y)) \cdot(\v(x)-\v(y))}{|x-y|^{2}} d y\right) \u(x)
\end{split}
\end{align}
where $\v\in \dot H^{1/2}(\R;\S)$ with $\bpsi_{\vartheta,\vec{\alpha}}\cdot\v=0$. Conversely, if the kernels maps of $L_{\psi_{\vartheta,\vec{\alpha}}}$ are all generated by differentiating nearby half-harmonic maps, then we call $\bpsi_{\vartheta,\vec{\alpha}}$ is non-degenerate. Such property is also known as integrability in the context of minimal surfaces \cite{allard1981radial} and harmonic maps \cite{gulliver1989rate}. The non-degeneracy of harmonic maps from $\R^2$ to $\S^2$ has been established in \cite{gulliver1989rate,chen2018nondegeneracy}.

The study to non-degeneracy of half-harmonic maps is initiated by \citet{sire2017nondegeneracy} and \citet{enno2018energy}.
The authors in \cite{sire2017nondegeneracy} confirm the case when $\deg \u=\pm 1$. The authors in  \cite{enno2018energy} can deal with very special case of higher degree, more precisely, when $u=(x-\im)^m/(x+\im)^m$. After a stereographic projection (or Cayley transformation) such $u$ is equivalent to $z^m$, $z\in\S$. Their approach depend on the symmetry of $z^m$ crucially. In the present paper, we completely solve the non-degeneracy for all half-harmonic maps.
\begin{theorem}\label{intro:thm:non}
Suppose $u$ is a half-harmonic map $\R\to \S$, then all the $\dot H^{1/2}(\R)$ maps in the kernel of $ L_{u}$ are generated by differentiating half-harmonic maps close to $u$. More precisely, $\text{dim}_\R\ker L_{\u}=2| \deg u |+1$. In particular, if $u=e^{\im \vartheta}\prod_{j=1}^k(\frac{x-\alpha_j}{x-\bar \alpha_j})^{d_j}$ with $\{\alpha_j\}_{j=1}^k$ are distinct and $d_j\geq 1$, then
\begin{align*}
    \ker L_{u}=\textup{span}_{\R}\left\{1,\Re \frac{1}{(x-\bar \alpha_j)^{s}},\Im \frac{1}{(x-\bar\alpha_j)^{s}}: s=1,\cdots,d_j,j=1,\cdots,k.\right\}\im u.
\end{align*}
\end{theorem}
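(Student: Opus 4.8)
The plan is to first strip off the vector-valued constraint, turning $L_{\u}\v=0$ into a single scalar nonlocal equation, and then to solve that equation by harmonic extension to $\Rp$, using that $u$ is the boundary value of a Blaschke-type product, hence holomorphic. Write an admissible variation as $\v=\im\phi\u$ with $\phi$ real-valued. Combining the product rule $\DRh(FG)=F\,\DRh G+G\,\DRh F-\frac1\pi\,\mathrm{P.V.}\!\int\frac{(F(x)-F(y))(G(x)-G(y))}{|x-y|^{2}}\,dy$ with the Euler--Lagrange equation $\DRh\u=\lambda\u$ (which forces the bilinear coefficient in $L_{\u}$ to equal $\u\cdot\DRh\v$) one gets $L_{\u}\v=(\DRh\v)^{\perp\u}-\lambda\v$, and since the frame $\{\u,\im\u\}$ is orthonormal this is a multiple of $\im\u$; explicitly $L_{\u}\v=(\mathcal M_{\u}\phi)\,\im\u$, where
\begin{equation*}
\mathcal M_{\u}\phi(x):=\DRh\phi(x)-\frac1{2\pi}\,\mathrm{P.V.}\!\int_{\R}\frac{(\phi(x)-\phi(y))\,|\u(x)-\u(y)|^{2}}{|x-y|^{2}}\,dy=\frac1\pi\,\mathrm{P.V.}\!\int_{\R}\frac{(\phi(x)-\phi(y))\,\u(x)\cdot\u(y)}{|x-y|^{2}}\,dy .
\end{equation*}
Thus $\ker L_{\u}\cong\ker\mathcal M_{\u}$; since $\mathcal M_{\u}$ depends on $\u$ only through $\u\otimes\u$, we may replace $\u$ by $\bar\u$ and assume $\deg u=d\ge1$, so $u=B=e^{\im\vartheta}\prod_{j}\big(\tfrac{x-\alpha_j}{x-\bar\alpha_j}\big)^{d_j}$ extends holomorphically to $\Rp$ with $B'/B=\im\lambda$ on $\R$.

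For the inclusion ``$\supseteq$'' I would differentiate the family $\psi_{\vartheta,\vec\alpha}$. By the classification (Theorem~\ref{thm:class}) it is a smooth $(2d+1)$-parameter family of half-harmonic maps, so its tangent vectors at $\u$ lie in $\ker L_{\u}$; differentiating $\log\psi_{\vartheta,\vec\alpha}$ in $\vartheta$ and in the coefficients of the monic polynomial $\prod_\ell(x-\alpha_\ell)$ near the root $\alpha_j$ produces precisely $\im\u$ times $1,\ \Re(x-\bar\alpha_j)^{-s},\ \Im(x-\bar\alpha_j)^{-s}$ for $1\le s\le d_j$. This gives $\dim\ker L_{\u}\ge2d+1$ and the listed maps.

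The heart of the matter is the reverse inequality $\dim\ker\mathcal M_{\u}\le2d+1$. Put $g:=\phi B\in\dot H^{1/2}(\R;\mathbb C)$ and let its harmonic extension to $\Rp$ be $\widehat g=g_{+}+\overline{g_{-}}$ with $g_{\pm}$ holomorphic; finite energy gives $g_{\pm}'\in L^{2}(\Rp)$, and $g_{\pm}$ are defined up to the gauge $g_{+}\mapsto g_{+}+c,\ g_{-}\mapsto g_{-}-\bar c$. Using $B'/B=\im\lambda$ on $\R$, the reality of $\phi$ reads $\Im(g_{+}/B)=\Im(Bg_{-})$ on $\R$, and a further computation collapses $\mathcal M_{\u}\phi=0$ to the \emph{local} condition $\Im\big((Bg_{-}')|_{\R}\big)=0$. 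Since $Bg_{-}'$ is holomorphic on $\Rp$, smooth up to $\R$ (Jacobi fields are smooth) and of sublinear growth at $\infty$ (because $g_{-}'\in L^{2}(\Rp)$), Schwarz reflection across $\R$ makes it an entire function with real boundary values and sublinear growth, hence a real constant; as $Bg_{-}'$ vanishes at $\alpha_j$, that constant is $0$, so $g_{-}$ is constant and, fixing the gauge, $g_{-}\equiv0$ and $\widehat g=g_{+}$ is holomorphic. Then $r:=\widehat g/B$ is meromorphic on $\Rp$ with poles of order $\le d_j$ at $\alpha_j$, equals $\phi$ (hence is real) on $\R$, and grows sublinearly, so by reflection
\begin{equation*}
r(z)=\gamma+\sum_{j}\sum_{s=1}^{d_j}\Big(\frac{c_{j,s}}{(z-\alpha_j)^{s}}+\frac{\overline{c_{j,s}}}{(z-\bar\alpha_j)^{s}}\Big),\qquad\gamma\in\R,\ c_{j,s}\in\mathbb C ,
\end{equation*}
a space of real dimension $1+2\sum_j d_j=2d+1$; restricting to $\R$ recovers exactly the span in the statement, and with the previous paragraph this yields $\dim\ker L_{\u}=2d+1$.

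The main obstacle is the middle of the third step: reducing the nonlocal identity $\mathcal M_{\u}\phi=0$ to the local boundary condition $\Im((Bg_{-}')|_{\R})=0$ — the commutator identities and the holomorphy of $B$ must be combined with care — and then supplying the a priori inputs that legitimize the reflection--Liouville argument, namely smoothness of Jacobi fields up to $\R$ and the bound $g_{\pm}'\in L^{2}(\Rp)$ from finite energy; getting the pole orders to be exactly $\le d_j$ in the coincident-$\alpha_j$ case also needs attention. A less self-contained alternative is to deform $B$ to $(\tfrac{z-\im}{z+\im})^{d}$ inside the connected $(2d+1)$-dimensional moduli space of degree-$d$ half-harmonic maps, combining the computation of \cite{enno2018energy} for $z^{d}$, the uniform bound $\dim\ker L_{\u}\ge2d+1$ and upper semicontinuity of $\dim\ker L_{\u}$; but globalizing this requires the matching lower-semicontinuity (continuity of the nullity), which is itself delicate.
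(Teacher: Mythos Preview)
Your overall architecture matches the paper's: the lower bound by differentiating the $(2d+1)$-parameter family, the reduction to a scalar Jacobi operator, the passage to the harmonic extension, Schwarz reflection on the quotient $r=\widehat g/B$ (the paper's $W=V/U$), and the rational-function count are all the same. The one substantive difference is precisely the step you flag as the ``main obstacle''. Instead of reducing $\mathcal M_{\u}\phi=0$ to the local condition $\Im(Bg_{-}')|_{\R}=0$ via Hardy decomposition and commutator identities, the paper introduces the \emph{linearized Hopf differential}
\[
\Phi\;=\;2\,U_z\,\overline V_z\;=\;\partial_x\boldsymbol U\cdot\partial_x\boldsymbol V-\partial_y\boldsymbol U\cdot\partial_y\boldsymbol V-\im\bigl(\partial_x\boldsymbol U\cdot\partial_y\boldsymbol V+\partial_x\boldsymbol V\cdot\partial_y\boldsymbol U\bigr),
\]
notes that $\Phi$ is holomorphic on $\Rp$ (since $U$ is holomorphic and $V$ is harmonic), checks directly from the linearized boundary equation that $\Im\Phi=0$ on $\R$, and then uses $\Phi\in L^{1}(\Rp)$ to conclude $\Phi\equiv 0$, hence $V$ is holomorphic. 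This is equivalent to your route (indeed $B'/B=\im\lambda$ on $\R$ turns $\Im\Phi|_{\R}=0$ into exactly your $\Im(Bg_{-}')|_{\R}=0$), but it bypasses the nonlocal-to-local reduction entirely: the Hopf differential packages that computation into two lines. The paper also handles the point at infinity differently and more robustly: rather than arguing ``sublinear growth from $g_{\pm}'\in L^{2}$'', it transports the problem to the disk via the Cayley transform, shows $V\circ\mathfrak C^{-1}$ is holomorphic on $\overline{\D}$ (checking the two possible bad boundary points by using two different Cayley maps), and reads off boundedness of $W$ at infinity from smoothness on the closed disk. Your growth argument can be made to work, but the disk picture is cleaner and simultaneously supplies the smoothness-up-to-the-boundary you need for the reflection.
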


The second part of this paper deals with the quantitative stability of half-harmonic maps. To describe it, we note that half-harmonic maps achieve the minimum of $\dot H^{1/2}(\R;\S)$ norm in its homotopy class.  Namely
\begin{theorem}\label{intro:lower-bd}
Suppose that $u\in\cnRS$. Then $\|u\|_{\cnr}^2\geq |\deg u|$. If the equality holds, then $u$, or its complex conjugate, must be the form of \eqref{intro:1/2-H-form}.
\end{theorem}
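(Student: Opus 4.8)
The plan is to reduce the lower bound to a classical fact about the half-Laplacian and the winding number. Recall that for $u\in\cnRS$ the quantity $\|u\|_{\cnr}^2$ equals the Dirichlet energy of the harmonic extension $\tilde u:\Rp\to\mathbb{D}$; writing $\tilde u$ in polar form and using that $u$ takes values in $\S$, one obtains the Cauchy--Schwarz/isoperimetric-type inequality
\begin{align}\label{prop-plan-eq1}
  \|u\|_{\cnr}^2=\frac{1}{2\pi}\int_{\Rp}|\nabla\tilde u|^2\,dx\ \geq\ \frac{1}{\pi}\int_{\Rp}|\tilde u_{x_1}\wedge\tilde u_{x_2}|\,dx\ =\ \frac{1}{\pi}\int_{\Rp}\tilde u^\ast(\omega),
\end{align}
where $\omega$ is (a multiple of) the area form on $\mathbb{D}$ and $\tilde u^\ast(\omega)$ its pullback. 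The right-hand side is a topological/degree term: by Stokes' theorem it reduces to a boundary integral over $\R=\partial\Rp$, which is exactly $|\deg u|$ by the definition \eqref{deg-series}. This gives $\|u\|_{\cnr}^2\geq|\deg u|$.

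First I would set up the harmonic extension carefully and justify $\|u\|_{\cnr}^2=\tfrac{1}{2\pi}\int_{\Rp}|\nabla\tilde u|^2$, citing the identity referenced as \eqref{1.1}; a minor technical point is that finite $\dot H^{1/2}$ energy only forces $u$ into $\S$ a.e., so one should either work with the trace of the extension or approximate $u$ by smooth $\S$-valued maps in $\dot H^{1/2}$ (density of such maps in $\cnRS$, which holds because $\pi_1(\S)$ contributes no local obstruction on a $1$-dimensional domain). Second, I would prove the pointwise inequality $|\nabla\tilde u|^2\geq 2|\tilde u_{x_1}\wedge\tilde u_{x_2}|$, which is just $|a|^2+|b|^2\geq 2|a\wedge b|$, and observe that equality holds pointwise iff $|\tilde u_{x_1}|=|\tilde u_{x_2}|$ and $\tilde u_{x_1}\perp\tilde u_{x_2}$, i.e.\ iff $\tilde u$ is (anti)conformal. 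Third, I would identify $\tfrac{1}{\pi}\int_{\Rp}\tilde u^\ast(\omega)$ with $\deg u$ (up to sign according to orientation) via Stokes, taking care with the noncompactness of $\Rp$ by an exhaustion/decay argument using the finite energy.

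For the rigidity statement, suppose equality holds. Then equality holds in \eqref{prop-plan-eq1}, so a.e.\ $\tilde u$ is weakly (anti)conformal and, being harmonic, is a (anti)holomorphic map $\Rp\to\mathbb{D}$. A holomorphic map from the half-plane to the disc of finite Dirichlet energy with unimodular boundary values is a finite Blaschke product (precomposed with a Cayley transform); transporting back to $\R$ this is exactly the form \eqref{intro:1/2-H-form}, and the anticonformal case gives its complex conjugate. The main obstacle I anticipate is the rigidity direction rather than the inequality: one must rule out equality being attained by maps that are conformal only on part of $\Rp$ or that fail to be smooth, and then invoke the classification of finite-energy holomorphic self-maps of the disc (equivalently Theorem~\ref{thm:class}) to pin down the Blaschke-product form. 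In fact, once $\tilde u$ is shown holomorphic with boundary trace $u\in\S$ a.e.\ and finite energy, one can simply cite the classification theorem already quoted in the excerpt to conclude, so the real work is the regularity/holomorphy step.
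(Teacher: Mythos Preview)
Your approach is essentially the paper's: harmonic extension, the pointwise inequality $|\nabla U|^2\ge 2\,U_x\wedge U_y$ (the paper's identity \eqref{id-U}), and in the equality case the Cauchy--Riemann equations holding in $L^2$, hence holomorphy by Weyl's lemma. Two small points. First, your displayed chain \eqref{prop-plan-eq1} puts an equals sign between $\int|\tilde u_{x_1}\wedge\tilde u_{x_2}|$ and the pullback $\int\tilde u^*(\omega)$; Stokes applies to the \emph{signed} Jacobian, so the correct route is $|\nabla U|^2\ge \pm 2\,U_x\wedge U_y$, giving $\|u\|^2\ge \pm\deg u$ and hence $\ge |\deg u|$. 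Second, for the rigidity the paper works on the disc and, after getting $U:\D\to\D$ holomorphic, invokes the Brezis--Nirenberg fact that $|U(z)|\to 1$ uniformly as $|z|\to 1$ and then Burckel's theorem that such maps are finite Blaschke products; this is more direct than routing through Theorem~\ref{thm:class}, which would require you to first check that a holomorphic $U$ with unimodular trace actually yields a half-harmonic boundary map.
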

A natural question is that whether the discrepancy $\|u\|_{\cnr}^2-|\deg u|$ can control quantitatively the difference of $u$ from the half-harmonic maps. Naively, one expects a linear control as
\begin{align}\label{stab-1/2}
    \inf_{\vartheta\in \S,\vec{\alpha}\in \mathbb{H}^d }\|u-\psi_{\vartheta,\vec{\alpha}}\|_{\cnr}^2\leq C\left(\|u\|_{\cnr}^2-|\deg u|\right)
\end{align}
where $C$ is independent of $u$.

Such type of question has been raised for many other topics. For instance, \citet{brezis1985sobolev} ask a similar question of the classical Sobolev inequality on $\R^n$. Later \citet{bianchi1991note} obtain a quantitative stability estimate in the spirit of  \eqref{stab-1/2}. \citet{fusco2008sharp} prove a  sharp quantitative stability about isoperimetric inequality. To authors' knowledge, other types of quantitative stability estimates include (but not limited to) \cite{figalli2020sharp,karpukhin2021stability,allen2021linear}. Reader can see their papers and reference therein.

Recently \citet{bernand2021quantitative} prove a quantitative stability for degree $\pm 1$ harmonic maps from $\R^2\to \S^2$ similar to \eqref{stab-1/2}, whose proof is simplified by \cite{hirsch2021note,topping2020rigidity}. Due to the strong analogy between harmonic maps and half-harmonic ones, their works inspire us to prove the following theorem.  Denote
\begin{align}\label{def:D(u)}
    D(u)=\|u\|_{\cnr}^2-|\deg u|.
\end{align}
\begin{theorem}\label{intro:thm:sta-type1}
For $u\in\dot{H}^{1/2}(\R;\S)$ with $\deg u= 1$, there exists an $\alpha\in \mathbb{H}$ and $\vartheta\in\S$ such that
\begin{align}\label{inf_d-D}
    \|u-\psi_{\vartheta,\alpha}\|_{\cnr}^2\leq 36D(u).
\end{align}
If $\deg u=-1$, then the above statement holds with $\bar u$.
\end{theorem}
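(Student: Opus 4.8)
The plan is to reduce the statement to an analysis on the disk $\D$ via the Cayley transform, where degree-one maps become (perturbations of) rotations of the identity, and then run an expansion of the energy near the manifold of M\"obius transformations. First I would fix the conformal picture: writing $u:\R\to\S$ as (the boundary trace of) a map to the unit disk, the half-energy $\|u\|_{\cnr}^2$ equals the Dirichlet energy of the harmonic extension $\hat u$ to $\Rp$, and $\deg u$ is the (topological) degree/winding number. Because $\deg u=1$, Theorem \ref{intro:lower-bd} gives $\|u\|_{\cnr}^2\ge 1$ with equality exactly on the family $\psi_{\vartheta,\alpha}$, so $D(u)\ge 0$ and $D(u)=0$ forces $u=\psi_{\vartheta,\alpha}$. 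The goal is the quantitative linear bound, and the natural strategy is: (i) locate a near-optimal $\psi_{\vartheta,\alpha}$, (ii) decompose $u=\psi_{\vartheta,\alpha}+\v$ with $\v$ tangential (or approximately so) and, after projecting off the kernel directions supplied by Theorem \ref{intro:thm:non}, orthogonal to $\ker L_{\psi_{\vartheta,\alpha}}$, and (iii) show $D(u)$ controls $\|\v\|_{\cnr}^2$ from below with an explicit constant, using coercivity of the linearized operator on the orthogonal complement of its kernel together with control of the quadratic and higher-order remainder terms.

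The key analytic input is a spectral gap for $L_{\psi_{\vartheta,\alpha}}$ — i.e. that on the $\dot H^{1/2}$-orthogonal complement of its $3$-dimensional kernel (for $\deg=\pm1$) one has $\langle L_{\psi_{\vartheta,\alpha}}\v,\v\rangle\ge c\|\v\|_{\cnr}^2$ for a universal $c>0$. By conformal invariance it suffices to prove this for one representative, say $\psi=\frac{x-\im}{x+\im}$ (equivalently the identity on $\S$ after Cayley), where the linearized operator diagonalizes in the Fourier/spherical-harmonic basis; the kernel is the $3$-dimensional space in Theorem \ref{intro:thm:non} and the remaining eigenvalues are bounded below. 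One should track constants here since the theorem asserts the explicit value $36$. Then the energy identity reads schematically
\begin{align}\label{proof-expansion}
\|u\|_{\cnr}^2-1 = \tfrac12\langle L_{\psi}\v,\v\rangle + R(\v),
\end{align}
where the quadratic-form piece is nonnegative modulo kernel directions and $R(\v)$ collects cubic-and-higher terms in $\v$ that are $o(\|\v\|_{\cnr}^2)$ as $\|\v\|_{\cnr}\to0$; combined with the spectral gap this yields \eqref{inf_d-D} in a small neighborhood of the M\"obius manifold, and the global statement follows by a standard compactness/concentration-compactness argument using Theorem \ref{intro:lower-bd} to rule out vanishing and splitting (if $u$ stays far from all $\psi_{\vartheta,\alpha}$ but $D(u)\to0$, extract a limit attaining equality, contradiction).

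I expect the main obstacle to be twofold. First, getting the \emph{sharp} (or at least explicit) constant $36$: one must identify the bottom of the spectrum of $L_{\psi}$ on the orthogonal complement of the kernel precisely, and control the remainder $R(\v)$ in \eqref{proof-expansion} with a clean, non-asymptotic bound rather than a soft $o(\cdot)$ — this likely requires an honest expansion of the nonlinear term $\frac{1}{2\pi}P.V.\int |u(x)-u(s)|^2/|x-s|^2\,ds$ and careful use of the constraint $|u|=1$ (so that $\v$ is not exactly tangential and its normal component is quadratically small, $\psi\cdot\v = -\tfrac12|\v|^2$). Second, making the near-optimal choice of $(\vartheta,\alpha)$ effective: because the M\"obius group is non-compact, one needs to show that the infimum of $\|u-\psi_{\vartheta,\alpha}\|_{\cnr}$ is attained (or nearly attained) and that at a near-minimizer the perturbation $\v$ is genuinely orthogonal to the kernel, which is where Theorem \ref{intro:thm:non}'s explicit description of $\ker L_{\psi_{\vartheta,\alpha}}$ is used to justify the projection. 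Once these two points are handled, assembling \eqref{proof-expansion} with the spectral gap and the compactness argument gives the theorem.
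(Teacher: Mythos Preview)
Your approach via linearization, spectral gap, and compactness is a legitimate route to \emph{some} quantitative stability estimate, but it differs substantially from the paper's argument and would struggle to produce the explicit constant $36$.

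The paper's proof is entirely elementary and avoids the linearized operator altogether. After passing to $\S$ via the stereographic projection (an isometry on $\dot H^{1/2}$), it uses a single M\"obius precomposition to normalize $u$ so that $\fint_{\S} u = 0$, i.e.\ the zeroth Fourier coefficient $c_0$ vanishes; this is Lemma~\ref{lem:mean=0}, proved by a Leray--Schauder degree argument. With $c_0=0$ in hand, the three identities $\|u\|_{\cn}^2 = \sum_k |k||c_k|^2$, $\deg u = \sum_k k|c_k|^2$, and $\sum_k |c_k|^2 = 1$ combine in a few lines of arithmetic to give $\|u - c_1 z - c_{-1}\bar z\|_{\cn}^2 \le 2D(u)$, $1-|c_1|^2\le D(u)$, and $|c_{-1}|^2\le D(u)$, from which the distance to the rotation $e^{\im \arg c_1}\,\id_{\S}$ is bounded by $36D(u)$ via the triangle inequality. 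There is no spectral analysis, no remainder control, and no compactness step: the half-harmonic map $\psi_{\vartheta,\alpha}$ is chosen \emph{a priori} by the normalization, not by minimizing the distance afterward.

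What your approach buys is a conceptual framework that could in principle generalize --- indeed the paper remarks after the proof of Theorem~\ref{intro:thm:local} that non-degeneracy might furnish an alternative proof there. What it costs is the explicit constant: tracking the bottom eigenvalue of $L_\psi$ on the orthogonal complement, bounding the cubic remainder $R(\v)$ non-asymptotically, and patching the large-$D(u)$ regime via compactness would yield \emph{some} $C$, but recovering $C=36$ through that machinery would be delicate at best. The Fourier approach is both shorter and sharper for degree one.
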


The above theorem leaves us an intriguing question for half-harmonic maps with higher degree. The answer to this could shed some light to harmonic maps with higher degree. We find that there are some fundamental differences between the case degree $\pm1$ and higher degree. For instance, in degree 1, $\psi_{\vartheta,\alpha}$ is equivalent to $\psi_{0,\im}$ after some harmless rotation and M\"obius transformation of $\mathbb{H}$, while both sides of  \eqref{inf_d-D} is invariant under these operations. Essentially, the stability estimates near $\psi_{\vartheta,\alpha}$ is equivalent to that of $\psi_{0,\im}$. Thus we have a uniform constant in \eqref{inf_d-D}. However, in higher degree we do not have such equivalence. For instance, consider $\psi_{\vartheta,(\alpha_1,\alpha_2)}$ in degree 2. One can use M\"obius transformation to bring $\alpha_1$ to $\im$, but there is no control of $\alpha_2$, which might be very near to the boundary of $\mathbb{H}$. Indeed, we prove the following dichotomy for $\deg=2$.
\begin{theorem}\label{intro:thm:un-stable}
For any $M>0$, there exists $u\in \dot H^{1/2}(\R;\S)$ with $\deg u=2$ such that
\begin{align}\label{intro:inf>M}
    \inf_{\vartheta\in \S,\vec{\alpha}\in \mathbb{H}^2 }\|u-\psi_{\vartheta,\vec{\alpha}}\|_{\dot H^{1/2}(\R)}^2\geq M \left(\|u\|_{\dot H^{1/2}(\R)}^2-2\right).
\end{align}
\end{theorem}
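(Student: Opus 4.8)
The plan is to exhibit an explicit family of degree-$2$ maps that resemble two bubbles of wildly different scales, glued far apart, so that any single Blaschke product $\psi_{\vartheta,\vec\alpha}$ can match at most one of the two bubbles well, leaving an $O(1)$ error, while the energy excess $\|u\|_{\dot H^{1/2}(\R)}^2-2$ can be made arbitrarily small. Concretely, I would take for small $\varepsilon>0$
\begin{align*}
u_\varepsilon(x)=\left(\frac{x-\im}{x+\im}\right)\left(\frac{x-R_\varepsilon-\im\varepsilon}{x-R_\varepsilon+\im\varepsilon}\right)\,g_\varepsilon(x),
\end{align*}
where $R_\varepsilon\to\infty$ is a separation parameter and $g_\varepsilon$ is a small correction (possibly trivial, $g_\varepsilon\equiv1$) introduced only if needed to keep $u_\varepsilon$ exactly $\S$-valued with controlled excess. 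The first factor is a unit-scale bubble centered near $0$, the second is a bubble of scale $\varepsilon$ centered near $R_\varepsilon$; as $\varepsilon\to0$ and $R_\varepsilon\to\infty$ fast enough, the interaction between the two bubbles in the $\dot H^{1/2}$ inner product decays, so $\|u_\varepsilon\|_{\dot H^{1/2}(\R)}^2=2+o(1)$, i.e.\ $D(u_\varepsilon)\to0$. The point of the two disparate scales is that the M\"obius group acting on $\mathbb H$ cannot simultaneously normalize both $\im$ and $R_\varepsilon+\im\varepsilon$ to comparable configurations, so there is no "recentering" that makes the competitor an honest perturbation of both bubbles at once.

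The key steps, in order, are: (1) verify $u_\varepsilon\in\dot H^{1/2}(\R;\S)$ with $\deg u_\varepsilon=2$ — immediate if $g_\varepsilon\equiv1$, since the product of degree-$1$ Blaschke-type factors is again $\S$-valued of the right degree; (2) estimate $\|u_\varepsilon\|_{\dot H^{1/2}(\R)}^2$ by writing $u_\varepsilon=\psi^{(1)}\psi^{(2)}$ and expanding; here each $\|\psi^{(j)}\|_{\dot H^{1/2}}^2=1$ by Theorem \ref{intro:lower-bd} (equality case), and one shows the cross term is $o(1)$ using scale separation — this is most cleanly done via the harmonic extension to $\Rp$ and the Dirichlet-integral representation, where two harmonic extensions concentrated at scales $1$ and $\varepsilon$ around points at distance $R_\varepsilon$ have $L^2$-overlap of their gradients bounded by a negative power of $R_\varepsilon/\max(1,\varepsilon)\sim R_\varepsilon$; (3) prove the lower bound \eqref{intro:inf>M}: fix any $\vartheta,\vec\alpha=(\alpha_1,\alpha_2)$ and show $\|u_\varepsilon-\psi_{\vartheta,\vec\alpha}\|_{\dot H^{1/2}(\R)}^2\ge c>0$ uniformly. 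For this, localize: in a neighborhood of scale $1$ about $0$, $u_\varepsilon$ looks like the first bubble; in a neighborhood of scale $\varepsilon$ about $R_\varepsilon$, like the second. A degree count / continuity argument forces $\{\alpha_1,\alpha_2\}$ to have one point "assigned" to each cluster, but whichever of $\alpha_1,\alpha_2$ is near $R_\varepsilon+\im\varepsilon$ must itself sit at height $\approx\varepsilon$, and then $\psi_{\vartheta,\vec\alpha}$ restricted to the unit-scale region is forced to be close to a \emph{constant} times the single factor $\frac{x-\alpha_{\text{other}}}{x-\bar\alpha_{\text{other}}}$ — but a degree-$1$ Blaschke factor times a constant cannot approximate the degree-$1$ bubble $\frac{x-\im}{x+\im}$ to better than $O(1)$ unless $\alpha_{\text{other}}$ is itself comparable to $\im$, in which case the $\varepsilon$-scale region is mismatched. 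Quantifying this mutual exclusion gives the uniform lower bound, and taking $M$ fixed we choose $\varepsilon$ small so that $c>M\,D(u_\varepsilon)$.

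The main obstacle I expect is step (3): making the "one bubble can't match both" heuristic into a clean uniform-in-$(\vartheta,\vec\alpha)$ lower bound, because $\vec\alpha$ ranges over the noncompact space $\mathbb H^2$ and one must rule out all degenerations — $\alpha_j\to\partial\mathbb H$, $\alpha_j\to\infty$, $\alpha_1\to\alpha_2$, and intermediate scales between $1$ and $\varepsilon$. The efficient way to handle this is a compactness/concentration argument: suppose the infimum in \eqref{intro:inf>M} over a sequence $\varepsilon_n\to0$ were $o(D(u_{\varepsilon_n}))=o(1)$; then $\psi_{\vartheta_n,\vec\alpha_n}\to u_{\varepsilon_n}$ in $\dot H^{1/2}$, and passing to the limit (after extracting the two bubble profiles of $u_{\varepsilon_n}$ by rescaling at scales $1$ and $\varepsilon_n$) one would obtain a degree-$2$ Blaschke product that in one rescaling limit equals $\frac{x-\im}{x+\im}$ and in the other equals $\frac{x-\im}{x+\im}$ as well, while carrying the total $\dot H^{1/2}$-mass $2$ — forcing (by the equality analysis of Theorem \ref{intro:lower-bd} and conservation of energy under weak limits) that $\psi_{\vartheta_n,\vec\alpha_n}$ splits as two separated unit-energy bubbles at scales asymptotically $1$ and $\varepsilon_n$, which contradicts the rigid form \eqref{intro:1/2-H-form}: a genuine degree-$2$ $\psi_{\vartheta,\vec\alpha}$ with $\alpha_1,\alpha_2$ at heights $h_1,h_2$ and horizontal separation $D$ has its two "bubbles" interacting at strength a fixed function of the cross-ratio $\frac{|\alpha_1-\bar\alpha_2|}{|\alpha_1-\alpha_2|}$, which cannot be simultaneously negligible unless one height degenerates, and a careful bookkeeping shows this costs a definite amount of $\dot H^{1/2}$-deviation from $u_{\varepsilon_n}$. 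I would isolate this rigidity as a short self-contained lemma about degree-$2$ Blaschke products before assembling the three steps.
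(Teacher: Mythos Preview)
Your construction has a fatal gap at its foundation: with $g_\varepsilon\equiv 1$, your map
\[
u_\varepsilon(x)=\frac{x-\im}{x+\im}\cdot\frac{x-R_\varepsilon-\im\varepsilon}{x-R_\varepsilon+\im\varepsilon}
\]
is \emph{itself} a half-harmonic map of the form \eqref{intro:1/2-H-form} with $\vec\alpha=(\im,\,R_\varepsilon+\im\varepsilon)$. By Theorem~\ref{intro:lower-bd} (equality case) or \eqref{eng-u-d} one has $\|u_\varepsilon\|_{\dot H^{1/2}(\R)}^2=2$ exactly, and the infimum in \eqref{intro:inf>M} is $0$ (achieved by $\psi_{0,\vec\alpha}=u_\varepsilon$). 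Both sides vanish and the example is trivial. Your step~(2) computation of a ``cross term $o(1)$'' is mistaken (the cross term is identically zero), and your step~(3) heuristic that a Blaschke product ``cannot match both bubbles'' is false for the simple reason that the family $\psi_{\vartheta,\vec\alpha}$ already contains two-bubble configurations at arbitrary scales and separations. The M\"obius group plays no role here: you don't need to normalize $\vec\alpha$, you just take $\vec\alpha$ equal to the poles of $u_\varepsilon$.

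To get a nontrivial example you must genuinely perturb away from the Blaschke manifold, and the content of the theorem is precisely that the right perturbation exists. The paper's mechanism is quite different from yours and more delicate: one takes two bubbles of the \emph{same} scale at positions $\pm j^2$, call the product $\psi_j$, and perturbs by $\varepsilon h_\perp\,\im\psi_j$ where $h$ is a logarithmic cutoff equal to $+1$ near $+j^2$ and $-1$ near $-j^2$. The point is that the kernel of the linearized operator $\mathcal{L}_{\psi_j}$ is $5$-dimensional, but as $j\to\infty$ four of its elements split into the $2+2$ translation/dilation kernels of the two individual bubbles, while the global rotation $K_1\equiv 1$ does \emph{not} split into the two local rotations. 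The function $h$ is, morally, the difference of the two local rotations: it is (after projection) orthogonal to $\ker\mathcal{L}_{\psi_j}$, yet $\int h\,\mathcal{L}_{\psi_j}h=O(1/\log j)$ because $h$ is a near-kernel element for each bubble separately. Thus $D(u)=O(\varepsilon^2/\log j)$ while $\|u-\psi_j\|_{\dot H^{1/2}}^2\sim\varepsilon^2$; an implicit-function-theorem argument (Propositions~\ref{prop:module} and~\ref{prop:inf-achieve}) then shows the infimum over all $(\vartheta,\vec\alpha)$ is actually achieved at $\psi_j$. Your unspecified correction $g_\varepsilon$ would have to carry exactly this structure, and nothing in your sketch indicates how to choose it.
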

The function $u$ we choose is a perturbation of $\psi_{0,\vec{\alpha}}$ with $\alpha_1=j^2+\im$ and $\alpha_2=-j^2+\im$ with $j\to \infty$. This exactly captures the dilemma in higher degree mentioned above. This example shows that one should not hope the stability for higher degree as simple as \eqref{stab-1/2}. Nevertheless, we can prove a local version of quantitative stability. Namely, if $u$ is already sufficiently close  to some half-harmonic map, then \eqref{stab-1/2} still holds.
\begin{theorem}\label{intro:thm:local}
For any compact set $\Omega\Subset \mathbb{H}$, there exists $\delta_{d,\Omega,\varepsilon}$ with the following significance. Suppose $u\in\dot H^{1/2}(\R;\S)$ satisfies $\deg u=d>0$ and
\begin{align}\label{intro:u-phi<delta}
    \|u-\psi_{\vartheta,\vec{\beta}}\|_{\cnr}^2\leq \delta_{d,\Omega,\varepsilon}
\end{align}
for some $\vartheta\in \S$ and $\vec{\beta}\in \Omega^d$. Then there exists a constant $C_{d,\Omega,\varepsilon}>0$ (independent of $u$), $\vartheta'\in \S$ and $\vec{\alpha}\in \Omega_\varepsilon^d \Subset\mathbb{H}^{d}$ such that
\begin{align}\label{intro:aDu}
    \|u-\psi_{\vartheta',\vec{\alpha}}\|_{\cnr}^2\leq C_{d,\Omega,\varepsilon} D(u).
\end{align}
Here $\Omega_{\varepsilon}=\cup_{p\in \Omega}B_{\varepsilon}(p)$ with $B_{\varepsilon}(p)$ are open balls in $\D$.
\end{theorem}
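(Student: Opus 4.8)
Since $\mathcal{E}(u)=\|u\|_{\cnr}^{2}$ is a genuine quadratic Hilbert form, the proof should need no delicate nonlinear expansion. Fix a half-harmonic map $\psi=\psi_{\vartheta',\vec\alpha}$ of degree $d$ and write $\phi=u-\psi$ for a map $u:\R\to\S$. Using $\mathcal E(\psi)=d$ (Theorem~\ref{intro:lower-bd}), the Euler--Lagrange equation \eqref{intro:E-L} in the form $\DRh\psi=V_\psi\,\psi$ with $V_\psi(x)=\frac1{2\pi}P.V.\int_\R\frac{|\psi(x)-\psi(s)|^2}{|x-s|^2}\,ds\ge 0$, and the pointwise identity $\psi\cdot\phi=-\tfrac12|\phi|^2$ forced by $|u|=|\psi|=1$, one obtains the \emph{exact} relation
\begin{equation}\label{pp:key}
D(u)=\|u\|_{\cnr}^2-d=2\langle\psi,\phi\rangle_{\cnr}+\|\phi\|_{\cnr}^2=\|\phi\|_{\cnr}^2-\frac1{2\pi}\int_\R V_\psi\,|\phi|^2\,dx=:Q_\psi(\phi),
\end{equation}
with no remainder term. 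So the whole theorem reduces to a \emph{uniform coercivity} statement for the Hessian form $Q_\psi$ transverse to $\ker L_\psi$, over the compact parameter family $\vec\alpha\in\Omega_\varepsilon^d$.

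\textbf{Step 1 (modulation).} Let $\mathcal M_d=\{\psi_{\vartheta,\vec\alpha}\}$ be the manifold of degree-$d$ half-harmonic maps. By Theorem~\ref{intro:thm:non} the derivatives of $\psi$ with respect to $\vartheta$ and to $\Re\alpha_k,\Im\alpha_k$ ($k=1,\dots,d$) span $\ker L_\psi$ (dimension $2d+1$) at every point, so $\mathcal M_d$ is a smooth immersed submanifold with $T_\psi\mathcal M_d=\ker L_\psi$. Given $u$ with $\|u-\psi_{\vartheta,\vec\beta}\|_{\cnr}^2\le\delta$ and $\vec\beta\in\Omega^d$, the implicit function theorem produces $(\vartheta',\vec\alpha)$ near $(\vartheta,\vec\beta)$ for which $\phi:=u-\psi_{\vartheta',\vec\alpha}$ is $\cnr$-orthogonal to $\ker L_{\psi_{\vartheta',\vec\alpha}}$: there are $2d+1$ equations for $2d+1$ unknowns, and the Jacobian at $u=\psi_{\vartheta,\vec\beta}$ is, up to sign, the Gram matrix of a basis of $\ker L_{\psi_{\vartheta,\vec\beta}}$, hence invertible. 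Compactness of $\Omega^d$ makes the quantitative IFT data uniform in $\vec\beta$, so for $\delta=\delta_{d,\Omega,\varepsilon}$ small the solution is unique, lies in $\Omega_\varepsilon^d$, and $\|\phi\|_{\cnr}\lesssim_{d,\Omega}\|u-\psi_{\vartheta,\vec\beta}\|_{\cnr}\le\sqrt\delta$.

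\textbf{Step 2 (uniform coercivity).} The claim is that there are $\delta_0,c>0$ depending only on $d,\Omega,\varepsilon$ so that $D(u)=Q_\psi(u-\psi)\ge c\|u-\psi\|_{\cnr}^2$ whenever $\psi=\psi_{\vartheta',\vec\alpha}$ with $\vec\alpha\in\Omega_\varepsilon^d$, $\deg u=d$, $\|u-\psi\|_{\cnr}\le\delta_0$, and $(u-\psi)\perp_{\cnr}\ker L_\psi$; together with Step~1 (after shrinking $\delta_{d,\Omega,\varepsilon}$ so that $\|\phi\|_{\cnr}\le\delta_0$) and \eqref{pp:key} this yields \eqref{intro:aDu} with $C_{d,\Omega,\varepsilon}=1/c$. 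I would prove the claim by contradiction: suppose $\psi_n=\psi_{\vartheta_n',\vec\alpha_n}$, $u_n$, $\phi_n=u_n-\psi_n$, $t_n:=\|\phi_n\|_{\cnr}\in(0,\delta_0]$, with $\phi_n\perp\ker L_{\psi_n}$ and $Q_{\psi_n}(\phi_n)/t_n^2\to0$. Along a subsequence $\vec\alpha_n\to\vec\alpha_*$ in $\overline{\Omega_\varepsilon^d}\Subset\mathbb{H}^d$, $\psi_n\to\psi_*:=\psi_{\vartheta_*',\vec\alpha_*}$ smoothly (so $V_{\psi_n}\to V_{\psi_*}$, uniformly bounded, $L^1$, and decaying), $\hat\phi_n:=\phi_n/t_n\rightharpoonup\hat\phi_*$ in $\cnr$, and $t_n\to t_*\in[0,\delta_0]$. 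If $t_*=0$: the relation $\psi_n\cdot\hat\phi_n=-\tfrac12 t_n|\hat\phi_n|^2$ and local compactness ($\cnr\hookrightarrow L^2_{\mathrm{loc}}$) give $\psi_*\cdot\hat\phi_*=0$, while decay of $V_{\psi_n}$ gives $\int V_{\psi_n}|\hat\phi_n|^2\to\int V_{\psi_*}|\hat\phi_*|^2$; weak lower semicontinuity then yields $Q_{\psi_*}(\hat\phi_*)\le\liminf Q_{\psi_n}(\hat\phi_n)=0$ with $\int V_{\psi_*}|\hat\phi_*|^2=1$, so $\hat\phi_*\neq0$. But on tangential vector fields $Q_{\psi_*}=\frac1{2\pi}\langle L_{\psi_*}\cdot,\cdot\rangle\ge0$ (the second variation at the minimizer $\psi_*$, using Theorem~\ref{intro:lower-bd}) with null space exactly $\ker L_{\psi_*}$ (Theorem~\ref{intro:thm:non}); and $\hat\phi_*\perp\ker L_{\psi_*}$ because $\ker L_{\psi_n}\to\ker L_{\psi_*}$ in the Grassmannian of $(2d+1)$-planes (being spanned by the explicit modes of Theorem~\ref{intro:thm:non}, which vary continuously even when the $\alpha_j$ collide). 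Hence $\hat\phi_*=0$, a contradiction. If instead $t_*>0$, then $D(u_n)=Q_{\psi_n}(\phi_n)\le\tfrac1n t_n^2\to0$, so $\|u_n\|_{\cnr}^2\to d$; since $u_n$ stays within $\delta_0<1$ of $\psi_n\to\psi_*$, weak convergence forces $\|u_*\|_{\cnr}^2\ge d-O(\delta_0^2)$, leaving strictly less than $1$ of excess energy, so a bubble decomposition in $\cnr(\R;\S)$ admits no bubble (each nontrivial one costing $\ge1$ of energy by Theorem~\ref{intro:lower-bd}); thus $\deg u_*=d$, $\|u_*\|_{\cnr}^2=d$, $u_n\to u_*$ \emph{strongly}, and $u_*\in\mathcal M_d$. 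Then $\phi_*:=u_*-\psi_*=\psi_{\vartheta'',\vec\gamma}-\psi_*$ with $(\vartheta'',\vec\gamma)$ within $O(t_*)$ of $(\vartheta_*',\vec\alpha_*)$, so Taylor expansion along $\mathcal M_d$ gives $\phi_*=\kappa+O(t_*^2)$ with $\kappa\in\ker L_{\psi_*}$, $\|\kappa\|_{\cnr}\asymp t_*$; but $\phi_*\perp\kappa$ forces $\|\kappa\|_{\cnr}\lesssim t_*^2$, impossible for $\delta_0$ small. This proves the claim, hence the theorem, with $\delta_{d,\Omega,\varepsilon}$ the minimum of the IFT threshold of Step~1 and a constant making $\|\phi\|_{\cnr}\le\delta_0$.

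\textbf{Expected main obstacle.} Step~2 is the crux, and three points need care. First, the concentration analysis: excluding bubbling at the near-minimal level $\|u_n\|_{\cnr}^2\to d$ relies on the energy quantization of half-harmonic bubbles (each at least $1$, Theorem~\ref{intro:lower-bd}) together with $\delta_0<1$, and one should make the ``no excess energy'' step rigorous through a profile/bubble decomposition in $\cnr(\R;\S)$. Second, $\cnr$ is the \emph{critical} Sobolev space on $\R$ (it embeds into $BMO$, not into any $L^p$), so the convergence $\int V_{\psi_n}|\hat\phi_n|^2\to\int V_{\psi_*}|\hat\phi_*|^2$ and the local compactness are cleanest to run after passing to the harmonic extension on $\Rp$ (where $\mathcal E$ becomes the Dirichlet energy and $V_\psi$ a boundary weight) or, equivalently, transferring everything to $\S$ via the Cayley transform, where the domain is compact and $\dot H^{1/2}(\S)\hookrightarrow L^p(\S)$ compactly for all $p<\infty$. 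Third, the displacement $\phi=u-\psi$ is not exactly tangential — it has normal part $(\psi\cdot\phi)\psi=-\tfrac12|\phi|^2\psi$ — and one must verify this quadratically small contribution does not spoil coercivity; the contradiction scheme above absorbs it automatically via $t_*=0$, but a direct estimate would split $\phi=\phi^\top+\phi^\perp$, $|\phi^\perp|=\tfrac12|\phi|^2$, and control $\phi^\perp$ by product estimates in $\cnr\cap L^\infty$. The remaining ingredients — the IFT modulation, the algebraic reduction \eqref{pp:key}, and keeping $\vec\alpha$ inside $\Omega_\varepsilon^d$ — are routine, the last being exactly the mechanism (compactness of $\Omega$) that Theorem~\ref{intro:thm:un-stable} shows cannot be dispensed with.
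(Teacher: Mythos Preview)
Your approach is sound but \emph{genuinely different} from the paper's. The paper does not use the non-degeneracy theorem, modulation, or any spectral/coercivity argument at all. Instead it transfers to $\S$, expands everything in Fourier series, and proceeds by \emph{induction on the degree} $d$: a M\"obius transformation (located via a Leray--Schauder degree argument, Lemma~\ref{lem:mean=0} and Lemma~\ref{lem:loc-zeros}) brings a zero of the harmonic extension of $u$ to the origin so that $c_0=0$; one then shows $\|u\bar z-\phi_{\vec b'}\|_{\cn}$ is small for some $\vec b'$ of length $d-1$, applies the inductive hypothesis to $u\bar z$, and lifts back using explicit coefficient estimates (Lemma~\ref{lem:A-lower-bd}, Proposition~\ref{prop:H1/2-L2}) to control the missing zeroth mode. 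In fact the paper says explicitly in \S1.2 that your route---non-degeneracy $\Rightarrow$ spectral gap $\Rightarrow$ local stability---``might be possible'' but was not pursued because ``using Fourier series is more direct.'' What your approach buys is conceptual clarity and an exact identity $D(u)=Q_\psi(u-\psi)$ with no remainder; what the paper's buys is that it avoids the soft-analysis steps you flag (concentration--compactness at the critical level, $L^2_{\mathrm{loc}}$ compactness for $\cnr$), trading them for concrete but somewhat opaque Fourier bookkeeping. Your Step~2 is essentially correct, though the $t_*>0$ branch as written leans on a bubble decomposition in $\dot H^{1/2}(\R;\S)$ that you would need to either cite or prove; an alternative is to observe that $D(u_n)\to 0$ together with $\|u_n-\psi_n\|\le\delta_0$ already forces $\|u_n\|^2\to d$ and then use the Fourier characterization $D(u)=2\sum_{k\le -1}|k||c_k|^2$ on $\S$ to get strong convergence of the negative modes, which suffices.
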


For harmonic maps, one can expect similar things happening here. We conjecture that there is local version of quantitative stability, while no uniform one as simple as \eqref{stab-1/2} holds for higher degree.


\subsection{Comments on the proofs}
We briefly sketch the main idea behind the proofs of Theorem \ref{intro:thm:non}-\ref{intro:thm:local}. To show the non-degeneracy, we start with the harmonic extension (denote as $\boldsymbol{U}$) of any half-harmonic map $\u$ to $\Rp$.  It has been proven by \cite{millot2015fractional,enno2018energy} that the Hopf differential of $\boldsymbol{U}$
\begin{align}\label{intro:Phi1}
\Phi=(|\partial_x\boldsymbol{U}|^2-|\partial_y\boldsymbol{U}|^2)-2\im \partial_x\boldsymbol{U}\cdot \partial_y\boldsymbol{U}\equiv 0.
\end{align}
This actually implies $U$ must be a holomorphic or anti-holomorphic function on $\mathbb{H}$.
If $v\in \ker L_u$, then we anticipate its harmonic extension $V$ is also holomorphic or anti-holomorphic on $\mathbb{H}$. This is done by considering
\begin{align}
    \tilde \Phi=2U_z\bar V_z=\partial_x \boldsymbol{U}\cdot \partial_x \boldsymbol{V}-\partial_y \boldsymbol{U}\cdot \partial_y \boldsymbol{V}-\im(\partial_x \boldsymbol{U}\cdot \partial_y \boldsymbol{V}+\partial_x \boldsymbol{V}\cdot \partial_y \boldsymbol{U}).
\end{align}
Note that $\tilde \Phi$ can be thought of the derivative of $\Phi$ in \eqref{intro:Phi1}. One can show that $\tilde \Phi\equiv 0$ which implies $V$ is (anti-)holomorphic on $\mathbb{H}$ when $U$ is (anti-)holomorphic. A crucial step is defining $W=V/U$. Since $\u\cdot\v=0$ on $\R$, then $W$ is purely imaginary on $\partial \mathbb{H}$. By Scharwz reflection, we can extend $W$ to a meromorphic function on $\mathbb{C}$. One can show that $W$ has no essential pole at infinity and thus $W$ is a rational functions. By counting the dimension of admissible rational functions, we get $\dim_{\R}\ker L_u=2|\deg u|+1$. This exactly matches the dimension of parameters generating nearby half-harmonic maps.

The proof of Theorem \ref{intro:thm:sta-type1} follows closely the approach in \cite{hirsch2021note}. They use harmonic polynomials to do the computation there, while here we use Fourier series of functions $\S\to \S$ instead. Thanks to the fact that the $\cn$ norm and degree of any map $u\in \dot H^{1/2}(\S;\S)$ can be written explicitly using the coefficients of Fourier series, the proof here is much shorter than that in \cite{hirsch2021note}. The proof of Theorem \ref{intro:thm:local} is a carefully refinement of the case degree $\pm 1$ and induction. Non-degeneracy can be used to prove quantitative stability estimates in some cases. The reason behind this is that the linearized operator has a spectral gap on the orthogonal space of its kernel. For instance, one can see \cite{bianchi1991note}. For similar approaches on fractional Sobolev inequality, one can see \cite{chen2013remainder}. There might be possible to prove Theorem \ref{intro:thm:local} using the non-degeneracy result we have shown. Since using Fourier series is more direct, we did not pursue this direction.

To get an example violating the uniform quantitative stability, we choose to perturb $\psi_{\vec{\alpha}}$ with $\alpha_1=j^2+\im$ and $\alpha_2=-j^2+\im$. For any $u$ near to $\psi_{\vec{\alpha}}$, we formally decompose it to the sum of one part in the kernel of linearized operator at $\psi_{\vec{\alpha}}$ and the other part $u_\perp$ in the orthogonal space. If $u_\perp$ is almost orthogonal to the kernel of the linearized operator at $\psi_{\alpha_1}$ or $\psi_{\alpha_2}$, then one still gets \eqref{stab-1/2}.
As $j\to \infty$, $\psi_{\alpha_1}$ and $\psi_{\alpha_2}$ has very weak interaction. The four elements of $\ker L_{\psi_{\vec{\alpha}}}$ split to two elements of $\ker L_{\psi_{\alpha_1}}$ and two elements of $\ker L_{\psi_{\alpha_2}}$. However, the element $1\in\ker L_{\psi_{\vec{\alpha}}} $, which corresponds to the rotation, can not split. This gives us some hope to find $u_\perp$ which is almost orthogonal to $\ker L_{\psi_{\vec{\alpha}}}$ but lies in $\ker L_{\psi_{\alpha_1}}$ and $\ker L_{\psi_{\alpha_2}}$ approximately. One typical example is  that $u$ is 1 near $j^2$  and is $-1$ near $-j^2$, which are centers of  $\psi_{\alpha_1}$ and $\psi_{\alpha_2}$ respectively, as constructed in \eqref{fj}. Once realizing this, the only job left is making sure the infimum in \eqref{intro:inf>M} can be achieved by $\psi_{\vec{\alpha}}$. This is done by using the implicit function theorem near $\psi_{\vec{\alpha}}$. We are inspired by the modulation analysis near Talenti bubbles, for instance see \cite{collot2017dynamics}.

\subsection{Structure of the paper} In Section \ref{sec:pre}, we give a detailed preliminary on $\DRh$, $\cnr$ and half-harmonic maps. The equivalence of half-harmonic maps defined on $\R$ and $\S$ is used implicitly in the following sections. In Section \ref{sec:non-d}, we prove the non-degeneracy of the linearized operator at each half-harmonic map. We divert to consider the stability from Section \ref{sec:stab-1}. There we prove the quantitative stability for degree $\pm1$ and a local result for higher degree. Section \ref{sec:higher} devotes to constructing an example losing uniform stability. Finally, we put some tedious computations in the Appendix which are needed in Section \ref{sec:higher}.

\section{Preliminary}\label{sec:pre}
In this section, we lay some foundations for half-harmonic maps. There are various perspective to define them.
\subsection{Formulations on the real line}
\begin{definition}
For any $f:\R\to \R$, we define
\begin{align}\label{def:1/2-R}
    \DRh f(x)=\frac{1}{\pi}P.V.\int_{\R}\frac{f(x)-f(y)}{|x-y|^2}dy.
\end{align}
\end{definition}
We call $f\in \dot H^{1/2}(\R)$ if
\begin{align}\label{R-1/2-norm}
    \|f\|_{\dot H^{1/2}(\R)}^2:=\frac{1}{4\pi^2}\iint_{\R\times \R}\frac{|f(x)-f(y)|^2}{|x-y|^2}dxdy<\infty.
\end{align}
Suppose $\u=(u_1,u_2)$ is a map from $\R$ into $\S$. Here and the following we always assume $\S$ is embedded in $\R^2=\mathbb{C}$.
Define the energy of $\u:\R\to \S$ by
\begin{align}\label{1.1}
   \mathcal{E}(\u):= \frac{1}{2\pi}\int_{\mathbb{R}}\u\cdot \DRh \u d x.
\end{align}
Using $|\u|=1$, it is easy to verify that
\begin{align}
    \u\cdot \DRh \u (x)=\frac{1}{2\pi}P.V.\int_{\R}\frac{|\u(x)-\u(y)|^2}{|x-y|^2}dy.
\end{align}
Consequently
\begin{align}
\mathcal{E}(\u)=\|\u\|_{\dot{H}^{1/2}(\R)}^2=\|u_1\|_{\dot{H}^{1/2}(\R)}^2+\|u_2\|_{\dot{H}^{1/2}(\R)}^2.
\end{align}
The functional $\mathcal{E}$ is invariant under the trace of conformal maps keeping invariant the half-space $\mathbb{R}^2_+$: the M\"obius group.

The critical points of $\mathcal{E}$ are called \textit{half-harmonic maps}.
\begin{definition} A map $\u \in \dot{H}^{1 / 2}(\mathbb{R}, \S)$ is called a weak half-harmonic map if for any $\boldsymbol{\phi} \in \dot{H}^{1 / 2}\left(\mathbb{R}, \R^2\right) \cap$ $L^{\infty}\left(\mathbb{R}, \R^2\right)$ there holds
\[
\frac{d}{d t}\bigg|_{t=0} \mathcal{E}\left(\frac{\u+t \boldsymbol{\phi}}{|\u+t\boldsymbol{\phi}|}\right)=0.
\]
\end{definition}
Computing the associated Euler-Lagrange equation of \eqref{1.1}, we obtain that if $\u: \mathbb{R} \rightarrow \S$ is a half-harmonic map, then $\u$ satisfies the following equation:
\begin{align}\label{E-L-R}
\left(-\Delta_{\mathbb{R}}\right)^{\frac{1}{2}} \u(x)=\left(\u\cdot \DRh \u\right) \u(x) \quad\text { in } \mathbb{R} .
\end{align}

All the half-harmonic maps (with finite energy) have been classified by \citet{millot2015fractional}.
In the sequel, we identify $\mathbb{R}^{2}$ with the complex plane $\mathbb{C}$ writing $z=x_{1}+\im x_{2}$. We shall write $u=u_1+\im u_2$ for any map $\u=(u_1,u_2)$.

\begin{theorem}[\citet{millot2015fractional}]\label{thm:class}
Let $u \in \dot{H}^{1 / 2}\left(\mathbb{R} ; \S\right)$ be a non-constant half-harmonic map into $\S$. Let $U$ be the harmonic extension of $u$ to $\mathbb{R}_{+}^{2}$. There exist some $d \in \mathbb{N}, \vartheta \in \mathbb{R}$ and $\left\{\alpha_{k}\right\}_{k=1}^{d} \subseteq \mathbb{H}=\{z\in \mathbb{C}:\textup{Im\,}z>0\}$ such that $U(z)$ or its complex
conjugate equals
\begin{align}\label{1/2-H-form}
\psi_{\vartheta,\vec{\alpha}}:=e^{i \vartheta} \prod_{k=1}^{d} \frac{z-\alpha_k}{z-\bar\alpha_k} .
\end{align}
Furthermore,
\begin{align}\label{eng-u-d}
    \|u\|_{\dot H^{1 / 2}(\mathbb{R})}^{2}=\frac{1}{2\pi} \int_{\mathbb{R}_{+}^{2}}\left|\nabla U\right|^{2} d z=d.
\end{align}
\end{theorem}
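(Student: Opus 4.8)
The plan is to study the Poisson extension $U$ of $u$ to $\Rp$ (writing $U=U_1+\im U_2$) and to prove first that its Hopf differential
\[
\Phi(z):=\bigl(|\partial_x U|^2-|\partial_y U|^2\bigr)-2\im\,\partial_x U\cdot\partial_y U=4\,\partial_z U\cdot\partial_z U
\]
vanishes identically, as recorded in \eqref{intro:Phi1}. Since $U$ is harmonic, $\partial_{\bar z}\Phi=8\,\partial_z U\cdot\partial_z\partial_{\bar z}U=2\,\partial_z U\cdot\Delta U=0$, so $\Phi$ is holomorphic on $\mathbb H$. On $\R$ the constraint $|u|\equiv1$ gives $u\cdot\partial_x u=0$, while \eqref{E-L-R} together with the Dirichlet-to-Neumann identity $\partial_y U|_{y=0}=-\DRh u$ shows that $\partial_y U|_{y=0}$ is a scalar multiple of $u$; hence $\partial_x U\cdot\partial_y U=0$ and $\Im\Phi\equiv0$ on $\partial\mathbb H$. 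Using the smoothness of half-harmonic maps up to the boundary (cited above), the Schwarz reflection principle extends $\Phi$ to an entire function. Since $|\Phi|\le|\nabla U|^2$ pointwise, the finite-energy hypothesis $\int_{\Rp}|\nabla U|^2<\infty$ yields $\Phi\in L^1(\mathbb C)$, and an entire $L^1$ function vanishes identically (by the mean value property $|\Phi(z)|\le\frac{1}{\pi R^2}\|\Phi\|_{L^1(\mathbb C)}\to0$ as $R\to\infty$); hence $\Phi\equiv0$.

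The identity $\Phi\equiv0$ reads $(\partial_z U_1)^2+(\partial_z U_2)^2=0$; applying the identity theorem to the two holomorphic factors, either $\partial_z U_1=\im\,\partial_z U_2$ on all of $\mathbb H$ or $\partial_z U_1=-\im\,\partial_z U_2$ on all of $\mathbb H$, i.e.\ $U=U_1+\im U_2$ is holomorphic or anti-holomorphic. Applying what follows to $\bar U$ if necessary, assume $U$ is holomorphic. Then $|U|^2$ is subharmonic and bounded on $\mathbb H$ with boundary values $1$, so $|U|\le1$ on $\mathbb H$ and $|U|=1$ a.e.\ on $\R$: $U$ is a bounded inner function of the half-plane. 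Finiteness of the Dirichlet energy $\int_{\Rp}|\nabla U|^2=2\int_{\Rp}|U'|^2$ forces, via the inner--outer factorization of $H^\infty(\mathbb H)$, the singular inner part of $U$ to be trivial and its Blaschke part to be finite (each singular inner or Blaschke factor carries a definite amount of Dirichlet energy); hence $U=e^{\im\vartheta}\prod_{k=1}^{d}\frac{z-\alpha_k}{z-\bar\alpha_k}$ with $\alpha_k\in\mathbb H$, which is \eqref{1/2-H-form}.

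For the energy identity, recall that $\|u\|_{\dot H^{1/2}(\R)}^2=\frac{1}{2\pi}\int_{\Rp}|\nabla U|^2$ is the standard Plancherel/trace relation underlying \eqref{1.1}. A degree-$d$ Blaschke product is a proper holomorphic map $\mathbb H\to\D$ attaining each value in $\D$ exactly $d$ times with multiplicity, so the area formula gives $\int_{\Rp}|U'|^2=d\,|\D|=d\pi$; since $|\nabla U|^2=2|U'|^2$ for holomorphic $U$, we obtain $\frac{1}{2\pi}\int_{\Rp}|\nabla U|^2=d$, which is \eqref{eng-u-d}.

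The main obstacle is the passage from ``$U$ is an inner function of finite Dirichlet energy'' to ``$U$ is a \emph{finite} Blaschke product'': this rests on Hardy-space factorization theory on the half-plane together with the quantitative fact that a nontrivial singular inner factor, or infinitely many Blaschke factors, would make $\int_{\Rp}|U'|^2$ infinite. The remaining ingredients---the harmonicity computation $\partial_{\bar z}\Phi=0$, the boundary conditions, Schwarz reflection, the $L^1$-Liouville step, and the area formula---are routine once the (cited) boundary regularity of half-harmonic maps is in hand.
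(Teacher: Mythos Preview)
The paper does not give its own proof of this theorem: it is quoted from \citet{millot2015fractional}. What the paper does contain are the ingredients you used, scattered in other places: the Hopf-differential argument (holomorphic, imaginary part vanishes on $\partial\mathbb H$, $L^1$-Liouville) appears verbatim in the proof of Lemma~\ref{lem:V-holo}; the energy identity \eqref{eng-u-d} is derived at the end of Section~\ref{sec:pre} from \eqref{id-U}, \eqref{deg-disk} and the degree formula (your area-formula argument is an equivalent phrasing); and the step from ``holomorphic self-map of the disk with unit boundary modulus'' to ``finite Blaschke product'' is handled in the proof of Theorem~\ref{intro:lower-bd}. So your overall outline matches the paper's own reasoning.

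The one substantive divergence is the last step. You argue through the inner--outer factorization and the heuristic that ``each singular inner or Blaschke factor carries a definite amount of Dirichlet energy.'' That heuristic is not literally correct, because the Dirichlet integral is not additive under multiplication; what is true (and what your area-formula argument for \eqref{eng-u-d} actually shows) is that $\int_{\Rp}|U'|^2$ equals the area of the image counted with multiplicity, which is infinite unless $U$ is a \emph{finite} Blaschke product. The paper avoids this by a different route: in the proof of Theorem~\ref{intro:lower-bd} it first invokes \cite{brezis1995degree} to get $|U(z)|\to 1$ \emph{uniformly} as $|z|\to 1$ (a consequence of $u\in\dot H^{1/2}$), and then cites Burckel's theorem that a holomorphic self-map of $\D$ with this uniform boundary behavior is a finite Blaschke product. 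That route is shorter and sidesteps Hardy-space factorization entirely. Either way your proposal is correct, but you should replace the ``definite energy per factor'' sentence by the area-counting argument, or by the Brezis--Nirenberg/Burckel citation the paper uses.
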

The above theorem says that the Stereographic projection
\begin{align}\label{def:S(x)}
    \mathcal{S}(x)=\left(\frac{2 x}{x^{2}+1}, \frac{x^{2}-1}{x^{2}+1}\right): \mathbb{R} \rightarrow \S
\end{align}
is a half-harmonic map. Actually one can verify it directly. It follows from \eqref{def:1/2-R} and some computations that
\begin{align}
(-\Delta)^{\frac{1}{2}} \mathcal{S}(x)=\frac{1}{\pi} P . V . \int_{\mathbb{R}} \frac{\mathcal{S}(x)-\mathcal{S}(y)}{|x-y|^{2}} d y&=\left(\frac{4 x}{\left(1+x^{2}\right)^{2}}, \frac{2\left(x^{2}-1\right)}{\left(1+x^{2}\right)^{2}}\right), \\
\frac{1}{2 \pi} \int_{\mathbb{R}} \frac{|\mathcal{S}(x)-\mathcal{S}(y)|^{2}}{|x-y|^{2}} d y&=\frac{2}{1+x^{2}}.
\end{align}
It is easy to verify that $\mathcal{S}$ satisfies \eqref{E-L-R}. Therefore it is a half-harmonic map from $\R\to \S$.
\subsection{Formulations on the unit circle}
Using the Stereographic projection in \eqref{def:S(x)},
one can reformulate the problem by the maps from $\S$ to $\S$.
To that end, we parametrize $\S=\{e^{\im\theta}:\theta\in[0,2\pi)\}$. Let $\mathcal{S}(x)=e^{\im\theta}$, $\mathcal{S}(y)=e^{\im\vartheta}$, then
\[|e^{\im\theta}-e^{\im\vartheta}|^2=|\mathcal{S}(x)-\mathcal{S}(y)|^2=\frac{4(x-y)^2}{(x^2+1)(y^2+1)}.\]
Using $x=\mathcal{S}^{-1}(\theta):=\mathcal{S}^{-1}(e^{\im \theta})$, we can write $f(x)$ defined on $\R$ to $\tilde f(\theta)$ defined on $\S$. By the above identity and \eqref{def:1/2-R}, we obtain
\begin{align}\label{1/2-relation-R-S}
    (-\Delta_{\R})^{\frac12}f(x)=\frac{1}{\pi}P.V.\int_{\R}\frac{f(x)-f(y)}{|x-y|^2}dy=\frac{2}{1+x^2}\frac{1}{\pi}P.V.\int_{\S}\frac{\tilde f(\theta)-\tilde f(\vartheta)}{|e^{\im\theta}-e^{\im\vartheta}|^2}d\vartheta.
\end{align}
This leads to the definition of $\DSh$.
\begin{definition}For any $\tilde f:\S\to \R$, we define the
\begin{align}\label{def:1/2-S}
(-\Delta_{\S})^{\frac12}\tilde f(\theta):=\frac{1+x^2}{2}\DRh f(x)=\frac{1}{\pi}P.V.\int_{\S}\frac{\tilde f(\theta)-\tilde f(\vartheta)}{|e^{\im\theta}-e^{\im\vartheta}|^2} d\vartheta.
\end{align}
\end{definition}
We call $\tilde f\in \cn$ if and only if
\begin{align*}
    \|\tilde f\|^2_{\dot H^{1/2}(\S)}:=
    \frac{1}{4\pi^2}\iint_{\S\times \S}\frac{|\tilde f(\theta)-\tilde f(\vartheta)|^2}{|e^{\im\theta}-e^{\im\vartheta}|^2}d\vartheta d\theta<\infty.
\end{align*}
Using \eqref{1/2-relation-R-S}, it is easy to see that for any $f:\R\to \S$,
\begin{align}\label{isometry}
    \| f\|_{\dot H^{1/2}(\R)}=\| f\circ \mathcal{S}^{-1}\|_{\cn}.
\end{align}
Therefore $\mathcal{S}$ is an isometric isomorphism of $\dot H^{1/2}(\R)$ and $\cn$.

Now for any map $\u=(u_1,u_2):\S\to \S$, we still adopt the notation $u=u_1+\im u_2$ is a complex-valued function, we define $\DSh u=\DSh u_1+\im \DSh u_2$ and
\begin{align}\label{uS1n}
\|u\|_{\dot H^{1/2}(\S)}^2:=\|u_1\|_{\dot H^{1/2}(\S)}^2+\|u_2\|_{\dot H^{1/2}(\S)}^2=\Re\int_{\S} \bar u\DSh u.
\end{align}
Here we have used the multiplication of two complex numbers and Re denotes the real part. One readily derive that  $\|\text{id}_{\S}\|_{\dot H^{1/2}(\S)}^2=1$.

One also define the energy as $\mathcal{E}(u)=\|u\|_{\cn}^2$ and the the critical points of $\mathcal{E}$ are called half-harmonic maps $\S\to \S$, which satisfy
\begin{align}\label{E-L-S}
    \DSh u(\theta)=\left(\frac{1}{2\pi}P.V.\int_{\S}\frac{|u(\theta)-u(\vartheta)|^2}{|e^{\im\theta}-e^{\im\vartheta}|^2} d\vartheta\right)u(\theta)\quad \text{on }\S.
\end{align}
The isometry in \eqref{isometry} infers the one-to-one correspondence of half-harmonic maps between $\R\to \S$ and $\S\to \S$ through the stereographic projection.

One of the great advantages of working on $\S$ is that we have the Fourier expansion. Namely for any $u:\S\to \mathbb{C}$, we formally have
\[u=\sum_{-\infty}^\infty c_ke^{\im k\theta}=\sum_{k\in \mathbb{Z}}c_kz^k,\quad \text{where }c_k=\fint_{\S}uz^{-k}d\theta.\]
Using the fact that $\DSh z^k=|k|z^k$ for any $k\in \mathbb{Z}$ (for instance, see \cite{sire2017nondegeneracy}), we have the following interpretation of $\DSh$,
\[(-\Delta_{\S})^{\frac12}u=\sum_{k\in \Z} |k|c_k z^k,\]
consequently \eqref{uS1n} implies
\begin{align}\label{2norm-series}
    \|u\|_{\cn}^2=\sum_{k\in\Z}|k||c_k|^2.
\end{align}
Furthermore, if $u$ maps $\S$ to $\S$, then $|u|=1$ is equivalent to
\begin{align}\label{u=1-series}
    \sum_{j\in \mathbb{Z}} |c_j|^2=1,\quad \sum_{j\in \mathbb{Z}}\bar c_jc_{j+k}=0, \quad \forall \,k\in \mathbb{Z}.
\end{align}

Recall that the degree (or winding number) is well defined for $\dot H^{1/2}(\S;\S)$ by \citet{de1991boundary} (see also \citet{brezis1995degree}). More precisely,
\begin{align}\label{deg-series}
    \deg u=\frac{1}{2\pi \im}\int_{\S}\bar{u}du=\frac{1}{2\pi}\int_{\S}u\wedge u_\theta d\theta=\sum_{-\infty}^\infty k|c_k|^2.
\end{align}
Throughout this paper, we use the notation $(z_1+\im z_2)\wedge (w_1+\im w_2)=z_1w_2-z_2w_1$.
From the above equation and \eqref{2norm-series}, one can see that the degree is continuous in $\dot H^{1/2}(\S;\S)$ topology. For $u\in\dot H^{1/2}(\R;\S)$, we shall define $\deg u=\deg u\circ\mathcal{S}^{-1}$. We have the following simple fact.
\begin{lemma}\label{lem:deg_prod}
If $u,v\in \cnS$, then $\deg (uv)=\deg u+\deg v$.
\end{lemma}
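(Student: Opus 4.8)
The plan is to establish the identity first for smooth maps, where it is elementary, and then pass to the limit, using the continuity of $\deg$ on $\cnS$ recorded right after \eqref{deg-series}. As a preliminary remark, $uv$ is an admissible competitor (so that $\deg(uv)$ makes sense): indeed $|uv|=1$ a.e.\ since $|u|=|v|=1$ a.e., and $\cn\cap L^\infty(\S)$ is an algebra, which follows by inserting $f(\theta)g(\theta)-f(\vartheta)g(\vartheta)=f(\theta)(g(\theta)-g(\vartheta))+g(\vartheta)(f(\theta)-f(\vartheta))$ into the Gagliardo double integral defining $\|\cdot\|_{\cn}$ and using $|u|,|v|\le 1$; the same splitting yields the product estimate $\|fg\|_{\cn}\le C\big(\|f\|_{L^\infty(\S)}\|g\|_{\cn}+\|g\|_{L^\infty(\S)}\|f\|_{\cn}\big)$, which will be reused below. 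Hence $uv\in\cnS$.

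For $u,v\in C^\infty(\S;\S)$ the claim is classical. One route is to lift $u=e^{\im\phi}$ and $v=e^{\im\psi}$ with $\phi,\psi\in C^\infty(\R;\R)$ satisfying $\phi(\theta+2\pi)-\phi(\theta)=2\pi\deg u$ and $\psi(\theta+2\pi)-\psi(\theta)=2\pi\deg v$, so that $uv=e^{\im(\phi+\psi)}$ and reading off the winding number gives $\deg(uv)=\deg u+\deg v$. Equivalently, since $|u|=|v|=1$ one has the pointwise identity
\begin{align*}
\overline{uv}\,d(uv)=|u|^2\,\bar v\,dv+|v|^2\,\bar u\,du=\bar u\,du+\bar v\,dv ,
\end{align*}
and integrating over $\S$ and using \eqref{deg-series} gives the same conclusion. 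For general $u,v\in\cnS$, I would invoke that $C^\infty(\S;\S)$ is dense in $\cnS$ --- a classical fact, which can also be obtained from the $\dot H^{1/2}$ lifting $u=e^{\im d\theta}e^{\im\psi}$ with $d=\deg u$ and $\psi\in\dot H^{1/2}(\S;\R)$, followed by mollification of $\psi$ --- to choose smooth $\S$-valued $u_n\to u$ and $v_n\to v$ in $\cnS$ which, along a subsequence (the smoothing construction provides this), also converge a.e.

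It then suffices to show $u_nv_n\to uv$ in $\cnS$: granting this, the continuity and integer-valuedness of $\deg$ force $\deg u_n=\deg u$, $\deg v_n=\deg v$ and $\deg(u_nv_n)=\deg(uv)$ for $n$ large, and the smooth case concludes the proof. To prove $u_nv_n\to uv$, split $u_nv_n-uv=u_n(v_n-v)+(u_n-u)v$ and bound the $\cn$ norm of each summand by the pointwise identity above. The ``diagonal'' contributions $\|v\|_{L^\infty}\|u_n-u\|_{\cn}$ and $\|u_n\|_{L^\infty}\|v_n-v\|_{\cn}$ tend to $0$. For the ``off-diagonal'' contribution from $(u_n-u)v$, the relevant term $\iint|(u_n-u)(\theta)|^2|v(\theta)-v(\vartheta)|^2|e^{\im\theta}-e^{\im\vartheta}|^{-2}\,d\theta\,d\vartheta$ tends to $0$ by dominated convergence, since its integrand is $\le 4|v(\theta)-v(\vartheta)|^2|e^{\im\theta}-e^{\im\vartheta}|^{-2}\in L^1(\S\times\S)$ and tends to $0$ a.e. For the contribution from $u_n(v_n-v)$, writing $\rho_n(\theta,\vartheta)=|u_n(\theta)-u_n(\vartheta)|^2|e^{\im\theta}-e^{\im\vartheta}|^{-2}$ and $\rho$ for the same quantity with $u$, the relevant term obeys $\iint|(v_n-v)(\vartheta)|^2\rho_n\le 4\|\rho_n-\rho\|_{L^1(\S\times\S)}+\iint|(v_n-v)(\vartheta)|^2\rho$, and both pieces vanish in the limit --- the first because $\|\rho_n-\rho\|_{L^1}\to0$ (a consequence of $u_n\to u$ in $\cnS$), the second by dominated convergence. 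This last step is the one genuinely delicate point: the difference-quotient kernel $\rho_n$ itself varies with $n$, so one cannot apply dominated convergence directly and must trade it for $L^1$-convergence of $\rho_n$. Two shorter but less self-contained alternatives would be: to cite that $\deg\colon\cnS\to\Z$ is a continuous group homomorphism under pointwise multiplication (see e.g.\ \cite{brezis1995degree}); or to carry out the purely algebraic Fourier computation from the constraints \eqref{u=1-series}, which nonetheless still requires a truncation step since $\{c_k\}$ need not lie in $\ell^1$.
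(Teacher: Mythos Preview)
Your proof is correct, but it takes a substantially longer route than the paper. The paper dispatches the lemma in one line: it applies the integral formula in \eqref{deg-series} directly to $uv$, uses the product rule $d(uv)=u\,dv+v\,du$ together with $|u|^2=|v|^2=1$ to obtain $\overline{uv}\,d(uv)=\bar u\,du+\bar v\,dv$, and integrates over $\S$. This is exactly the pointwise identity you wrote down for the smooth case; the difference is that the paper invokes it for arbitrary $u,v\in\cnS$ without approximation, tacitly reading $\frac{1}{2\pi\im}\int_{\S}\bar u\,du$ as the $H^{1/2}$--$H^{-1/2}$ duality pairing (equivalently the absolutely convergent sum $\sum k|c_k|^2$), in line with the degree theory of \cite{brezis1995degree} cited just before \eqref{deg-series}. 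Your approach --- establish the identity for smooth maps, then pass to the limit via density of $C^\infty(\S;\S)$ in $\cnS$, continuity of $\deg$, and a careful product-convergence argument in $\cn$ --- is fully self-contained and avoids having to interpret the integral formula for rough maps, at the price of several paragraphs of extra work. Both are valid; the paper's version is terse and leans on the cited background, while yours is the natural way to make it rigorous when that background is not taken for granted.
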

\begin{proof} Since $u,v$ take values in $\S$, then using \eqref{deg-series} to obtain
\begin{align*}
    \deg (uv)=\frac{1}{2\pi \im}\int_{\S}\bar u\bar vd(uv)=\frac{1}{2\pi \im}\int_{\S}\bar udu +\bar vdv=\deg u+\deg v.
\end{align*}

\end{proof}

Define $\D=\{\xi=x+\im
y:|\xi|<1\}$ and consider $\S=\partial \D$. Recall that $f\in \dot H^{1/2}(\S)$ if and only if it is the trace of some function in $\dot H^1(\D)$. The energy for $u\in \dot H^{1/2}(\S;\S)$ has a tractable representation
\begin{align}\label{norm-ext-disk}
    \|u\|_{\cn}^2=\inf\left\{\frac{1}{2\pi}\int_{\D}|\nabla U|^2:U\in \dot H^1(\D;\D) \text{ with } \text{Tr\,} U=u\right\}.
\end{align}
This infimum is achieved by the harmonic extension of $u$. Since $\partial U/\partial\nu=\DSh u$ on $\S$, where $\nu$ is the outward unit normal of $\partial\D$, then \eqref{E-L-S} is equivalent to
\begin{align}
\begin{cases}
    \Delta U=0 &\text{in }\D,\\
    \frac{\partial U}{\partial \nu}\wedge U=0 &\text{on }\S.
\end{cases}
\end{align}

All half-harmonic maps, as classified in Theorem \ref{thm:class}, are Blaschke products of $d$ M\"obius transformations of $\D$ or their complex conjugate. This can seen from  Cayley transformation. Recall that \textit{Cayley transform} is a bi-holomorphic mapping $\mathfrak{C}: {\mathbb{H}} \rightarrow {\mathbb{D}}$ which is defined as
\begin{align}\label{cayley}
    \xi=\mathfrak{C}(z):=\im \frac{z-\im}{z+\im}.
\end{align}
Here and the following, we use $\xi$ to denote the complex coordinates of $\D$ and $z$ to denote that of $\mathbb{H}$. Note that
the boundary mapping of $\mathfrak{C}(z)$ is just the stereographic projection \eqref{def:S(x)}.

By the one-to-one correspondence of half-harmonic maps in $\dot H^{1/2}(\R;\S)$ and $\dot H^{1/2}(\S;\S)$, one can  obtain all half-harmonic maps in $\dot H^{1/2}(\S;\S)$ from \eqref{1/2-H-form}.
More precisely, it consists of Blaschke products $\{\phi_{\vartheta',\vec{a}}:\vartheta'\in \S, \,\vec{a}=(a_1,\cdots,a_d)\in \D^d\}$ and their complex conjugates, where
\begin{align}\label{def:phi}
    \phi_{\vartheta',\vec{a}}(\xi):=e^{i\vartheta'}\prod_{j=1}^d\frac{\xi-a_j}{1-\bar a_j \xi}.
\end{align}
The parameters in \eqref{1/2-H-form} and the above equation are related by
\begin{align}
    a_j=\mathfrak{C}(\alpha_j),\quad  \vartheta'=\vartheta+\sum_{j=1}^d\theta_j,\quad e^{\im \theta_j}=\frac{-\alpha_j-\im}{1+\im\bar \alpha_j}.
\end{align}

Using some integration by parts, the degree of $u$ (see \eqref{deg-series}) can also be defined by
\begin{align}\label{deg-disk}
    \deg u=\frac{1}{\pi}\int_{\D}U_x\wedge U_y dxdy.
\end{align}
Notice the following identity when $U=U^1+\im U^2$
\begin{align}\label{id-U}
    |\nabla U|^{2}=\left(U_{x}^{1}-U_{y}^{2}\right)^{2}+\left(U_{y}^{1}+U_{x}^{2}\right)^{2}+2 U_{x} \wedge U_{y} \geqslant 2 U_{x} \wedge U_{y}.
\end{align}
Since $\phi_{\vartheta,\vec{a}}$ is holomorphic in $\D$, $\phi_{\vartheta,\vec{a}}$ achieves the identity for the above equation. Combining this with  \eqref{norm-ext-disk} and \eqref{deg-disk}, we just proved \eqref{eng-u-d} since
\begin{align}
    \|\phi_{\vartheta,\vec{a}}\|_{\cn}^2=\deg \phi_{\vartheta,\vec{a}}=d.
\end{align}

We show the proof of Theorem \ref{intro:lower-bd} to end this section.
\begin{proof}[{\bf Proof of Theorem \ref{intro:lower-bd}}]
The inequality is trivial by \eqref{id-U}. The proof of equality is essentially contained in \cite{mironescu2004variational}. For the reader's convenience, we include it here. Suppose $U$ is the harmonic extension of $u$. It follows from $\|u\|_{\cn}=|\deg u|$ and \eqref{id-U} that
\[U_x^1=U_y^2\quad \text{ and }\quad U_y^1=U^2_x\quad \text{ in }L^2(\D).\]
By Weyl’s lemma, this is equivalent to $U$ being holomorphic from $\D$ to $\D$. Moreover, $|U(z)|\to 1$ uniformly as $|z|\to 1$ (see \cite{brezis1995degree}). Then the result of  \citet{burckel1980introduction} implies all such maps  must be Blaschke products.
\end{proof}




\section{Non-degeneracy of the linearized operator}\label{sec:non-d}

Consider the half-harmonic map $u=u_1+\im u_2:\R\to \S$, equation \eqref{E-L-R} is equivalent to
\begin{align}
    \DRh u\wedge u=0.
\end{align}
The linearized operator at $u$ is
\begin{align}
    L_u[v]=\DRh u\wedge v+\DRh v\wedge u.
\end{align}
We define
\[\ker L_u=\{v=v_1+\im v_2\in \dot H^{1/2}(\R;\mathbb{C}):L_u[v]=0, u_1v_1+u_2v_2=0\}.\]
Suppose $V=V_1+\im V_2$ is the harmonic extension of $v$ to the upper half space $\mathbb{H}$ by Possion kernel, then $v\in \ker L_u$ is equivalent to
\begin{align}\label{eq:V-Rp}
    \begin{cases}
    \Delta V=0&\text{in }\mathbb{H},\\
   \partial_y U\wedge v+\partial_y V\wedge u=0&\text{on }\partial \mathbb{H}.
    \end{cases}
\end{align}



The stereographic projection induces an isometry between $\dot H^{1/2}(\R)$ and $\cn$. For a half-harmonic map $u:\S\to \S$, one can also define the linearized operator
\begin{align}
    \tilde L_u[v]=\DSh u\wedge v+\DSh v\wedge u
\end{align}
and $\ker \tilde L_u=\{v\in\dot H^{1/2}(\S;\mathbb{C}):\tilde L_u[v]=0,\Re u\bar v=0\}$.

\begin{lemma}\label{lem:kerR-kerS}
For any half-harmonic map $u:\R\to \S$, let $\tilde u=u\circ\mathcal{S}^{-1}$, we have
\[L_{ u}=J_{\mathcal{S}}(\tilde L_{\tilde u}\circ \mathcal{S}^{-1})\]
where $J_\mathcal{S}=(1-\sin\theta)$ is the Jacobian of $\mathcal{S}$. Consequently $\text{dim}_{\R}\ker L_{ u}=\text{dim}_{\R}\ker \tilde L_{\tilde u}$.
\end{lemma}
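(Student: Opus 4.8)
The plan is to carry out a direct change of variables. Since $\mathcal{S}:\R\to\S$ is the boundary restriction of the Cayley transform and is an isometric isomorphism of $\dot H^{1/2}(\R)$ onto $\cn$ by \eqref{isometry}, it suffices to track how the three operators $\DRh$, the potential term, and the wedge product transform under $x\mapsto\mathcal{S}^{-1}(\theta)$. The key computational input is precisely \eqref{def:1/2-S}, which says $\DSh\tilde f(\theta)=\frac{1+x^2}{2}\DRh f(x)$, i.e. $\DRh f(x)=J_{\mathcal{S}}(\theta)\,(\DSh\tilde f)(\mathcal{S}(x))$ where the conformal factor equals $\frac{2}{1+x^2}$, which I should identify with the stated Jacobian $J_{\mathcal{S}}=1-\sin\theta$ via $e^{\im\theta}=\mathcal{S}(x)$ (so that $\sin\theta=\frac{x^2-1}{x^2+1}$ and $1-\sin\theta=\frac{2}{x^2+1}$). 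The wedge product is purely pointwise and algebraic, hence commutes with composition: $(\DRh v\wedge u)(x)=J_{\mathcal{S}}(\theta)\,(\DSh\tilde v\wedge\tilde u)(\mathcal{S}(x))$, and similarly for the other term.

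Assembling these, for $v\in\dot H^{1/2}(\R;\mathbb{C})$ with $\tilde v=v\circ\mathcal{S}^{-1}$,
\begin{align*}
L_u[v](x)&=\DRh u(x)\wedge v(x)+\DRh v(x)\wedge u(x)\\
&=J_{\mathcal{S}}(\theta)\Big[(\DSh\tilde u\wedge\tilde v)(\mathcal{S}(x))+(\DSh\tilde v\wedge\tilde u)(\mathcal{S}(x))\Big]=J_{\mathcal{S}}(\theta)\,(\tilde L_{\tilde u}[\tilde v])(\mathcal{S}(x)),
\end{align*}
which is exactly the asserted identity $L_u=J_{\mathcal{S}}\,(\tilde L_{\tilde u}\circ\mathcal{S}^{-1})$. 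I should also remark that the pointwise constraint transforms correctly: $u_1v_1+u_2v_2=\Re(\bar u v)$ at $x$ equals $\Re(\bar{\tilde u}\tilde v)$ at $\mathcal{S}(x)$, so $v$ satisfies the orthogonality condition defining $\ker L_u$ iff $\tilde v$ satisfies the one defining $\ker\tilde L_{\tilde u}$.

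For the dimension count: since $J_{\mathcal{S}}(\theta)=1-\sin\theta>0$ for all $\theta$ except $\theta=\pi/2$ (a single point, of measure zero), $L_u[v]=0$ as an $L^2$ (or distributional) identity on $\R$ if and only if $\tilde L_{\tilde u}[\tilde v]=0$ on $\S$. Combined with the equivalence of the orthogonality constraints and with the fact that $v\mapsto\tilde v=v\circ\mathcal{S}^{-1}$ is an $\R$-linear isomorphism of $\dot H^{1/2}(\R;\mathbb{C})$ onto $\cnS$-type target spaces, this gives a linear bijection $\ker L_u\to\ker\tilde L_{\tilde u}$, hence $\dim_\R\ker L_u=\dim_\R\ker\tilde L_{\tilde u}$. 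The only genuinely delicate point is the bookkeeping of conformal factors — making sure the Jacobian appears with the correct power (weight $1$, matching the order $1/2$ of the operator) and that it is the same factor in the potential term as in the $\DRh$ term so that it factors out cleanly; this is handled entirely by \eqref{def:1/2-S} and the pointwise nature of the remaining terms, so I expect no real obstacle, only care with the identification of $\frac{2}{1+x^2}$ with $1-\sin\theta$.
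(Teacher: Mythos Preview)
Your proposal is correct and follows essentially the same approach as the paper: compute the Jacobian $J_{\mathcal{S}}=\frac{2}{1+x^2}=1-\sin\theta$, invoke the defining relation \eqref{def:1/2-S} to transfer $\DRh$ to $\DSh$ with the conformal factor in front, and observe that the wedge is pointwise so the factor pulls out of both terms in $L_u$. Your write-up is in fact more complete than the paper's short proof, since you explicitly check that the orthogonality constraint $\Re(\bar u v)=0$ transfers and that $J_{\mathcal{S}}$ vanishes only at a single point (corresponding to $x=\infty$), which the paper leaves implicit.
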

\begin{proof}It is easy to show that the Jacobian of $\mathcal{S}$ is
\[J_{\mathcal{S}}=\frac{2}{x^2+1}=1-\sin\theta.\]
Recall from \eqref{def:1/2-R} that
\[\DRh u(\mathcal{S}(\theta))=(1-\sin\theta){\DSh  \tilde u(\theta)}.\]
For any $v\in \dot H^{1/2}(\R;\mathbb{C})$, denote $\tilde v=v\circ \mathcal{S}^{-1}\in \dot H^{1/2}(\S;\mathbb{C})$, then
\[L_{ u}[ v](x)=\DRh u\wedge  v+\DRh  v\wedge  u={(1-\sin\theta)}\tilde L_{\tilde u}[\tilde v].\]
\end{proof}

The operator $L_u$ (or $\tilde L_u$) arise naturally in the linearization of the energy functional $\mathcal{E}$.
To see that, let us use $\u^\perp$ denote the vector rotating $\u$ counterclockwise by $\pi/2$. Any variation $\boldsymbol{\phi}$ can be written as $\tilde h\u+h\u^\perp$ for some real-valued function $\tilde h$ and $h$ in $\dot H^{1/2}(\R)$.
\begin{lemma}\label{lem:vari-E}
Suppose that $\u$ is a half-harmonic map $\R\to \S$ and $\boldsymbol{\phi}=\tilde h\u+h\u^\perp$ is a variation satisfying $|\u+\boldsymbol{\phi}|=1$. Then
\begin{align}\label{energ-expand}
    \mathcal{E}(\u+\boldsymbol{\phi})=\mathcal{E}(\u)+\int_{\R} h\mathcal{L}_{\u}[h]+O(h^3)
\end{align}
where the operator $\mathcal{L}_{\u}$ is given by
\begin{align}\label{def:calL}
    \mathcal{L}_{\u}=\DRh-(\u\cdot \DRh \u)+R
\end{align}
with the integral operator
\begin{align}\label{def:R}
    (Rf)(x)=\frac{1}{2\pi}\int_{\R}\frac{|\u(x)-\u(y)|^2}{|x-y|^2}f(y)dy.
\end{align}
\end{lemma}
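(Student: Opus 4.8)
The plan is to expand $\mathcal{E}(\u+\bphi)=\|\u+\bphi\|_{\dot H^{1/2}(\R)}^2$ as a genuine quadratic form and then use the pointwise constraint $|\u+\bphi|=1$ to eliminate $\tilde h$ in favour of $h$. Since the bilinear form $B(\boldsymbol f,\boldsymbol g)=\frac{1}{2\pi}\int_{\R}\boldsymbol f\cdot\DRh\boldsymbol g\,dx$ is symmetric (self-adjointness of $\DRh$), one has the \emph{exact} identity
\[
\mathcal{E}(\u+\bphi)=\mathcal{E}(\u)+\frac{1}{\pi}\int_{\R}\bphi\cdot\DRh\u\,dx+\mathcal{E}(\bphi).
\]
Writing $\bphi=\tilde h\u+h\u^\perp$ and using $\u^\perp\cdot\u=0$, $|\u|=|\u^\perp|=1$, the constraint becomes $(1+\tilde h)^2+h^2=1$, so choosing the branch near $1$ gives $\tilde h=\sqrt{1-h^2}-1=-\tfrac12 h^2+O(h^4)$; in particular $\tilde h$ is quadratic in $h$, and for $h$ small in $\dot H^{1/2}(\R)\cap L^\infty(\R)$ one has $\|\tilde h\|_{\dot H^{1/2}}\lesssim\|h\|_{L^\infty}\|h\|_{\dot H^{1/2}}$ by a Gagliardo-type estimate for $s\mapsto\sqrt{1-s^2}$.

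For the linear term I would invoke that $\u$ solves \eqref{E-L-R}, i.e. $\DRh\u=(\u\cdot\DRh\u)\u$. Then $\u^\perp\cdot\DRh\u=(\u\cdot\DRh\u)(\u^\perp\cdot\u)=0$, so the $h\u^\perp$ component contributes nothing, while the $\tilde h\u$ component yields $\frac{1}{\pi}\int_{\R}\tilde h\,(\u\cdot\DRh\u)\,dx=-\frac{1}{2\pi}\int_{\R}h^2(\u\cdot\DRh\u)\,dx+O(h^4)$. Thus the first-order term vanishes (as it must, $\u$ being critical) and the surviving second-order contribution from this part is exactly the potential term $-(\u\cdot\DRh\u)$ of $\mathcal{L}_{\u}$.

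For the quadratic term, since $\tilde h=O(h^2)$ the cross terms $B(\tilde h\u,h\u^\perp)$ and $B(\tilde h\u,\tilde h\u)$ are $O(h^3)$, hence $\mathcal{E}(\bphi)=\mathcal{E}(h\u^\perp)+O(h^3)$. The key computation is to evaluate $\mathcal{E}(h\u^\perp)$ via the Gagliardo expression \eqref{R-1/2-norm}: since $|\u^\perp|=1$ and $|\u^\perp(x)-\u^\perp(y)|=|\u(x)-\u(y)|$ (rotation by $\pi/2$ is an isometry), one has the algebraic identity
\[
|h(x)\u^\perp(x)-h(y)\u^\perp(y)|^2=(h(x)-h(y))^2+h(x)h(y)\,|\u(x)-\u(y)|^2,
\]
so that $\mathcal{E}(h\u^\perp)=\|h\|_{\dot H^{1/2}(\R)}^2+\frac{1}{2\pi}\int_{\R}h\,(Rh)\,dx=\frac{1}{2\pi}\int_{\R}h\,(\DRh h+Rh)\,dx$, with $R$ as in \eqref{def:R}. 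Adding the three contributions gives $\mathcal{E}(\u+\bphi)=\mathcal{E}(\u)+\frac{1}{2\pi}\int_{\R}h\,\mathcal{L}_{\u}[h]\,dx+O(h^3)$, which is the claim (the overall $\tfrac{1}{2\pi}$ being the normalization already present in \eqref{1.1}).

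The structurally essential step is the algebraic identity above: it is where the nonlocal potential operator $R$ is produced, precisely out of the "anisotropy" of the tangential field $h\u^\perp$. The remaining work is routine but must be done carefully — one needs $h$ small in $\dot H^{1/2}(\R)\cap L^\infty(\R)$ (the natural admissible class) to make sense of $\tilde h=\sqrt{1-h^2}-1$, to bound $\|\tilde h\u\|_{\dot H^{1/2}}\lesssim\|h\|^2$, and to certify that the cross terms are genuinely $O(\|h\|^3)$; all of this is standard fractional-Sobolev bookkeeping, so I do not expect it to pose a real obstacle.
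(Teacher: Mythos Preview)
Your proposal is correct and follows essentially the same route as the paper: expand $\mathcal{E}(\u+\bphi)$ bilinearly, use the constraint to write $\tilde h=-\tfrac12 h^2+O(h^4)$, kill the first-order term via the Euler--Lagrange equation $\DRh\u=(\u\cdot\DRh\u)\u$, discard cubic cross terms in $\mathcal{E}(\bphi)$, and then identify $\mathcal{E}(h\u^\perp)$. The only cosmetic difference is that you compute $\mathcal{E}(h\u^\perp)$ directly from the Gagliardo integrand via the pointwise identity $|h(x)\u^\perp(x)-h(y)\u^\perp(y)|^2=(h(x)-h(y))^2+h(x)h(y)|\u(x)-\u(y)|^2$, whereas the paper writes out $\DRh(h\u^\perp)$ using a product-type decomposition and then uses $\u^\perp(x)\cdot(\u^\perp(x)-\u^\perp(y))=\tfrac12|\u(x)-\u(y)|^2$; these are the same computation organized slightly differently, and you are also correct that the $\tfrac{1}{2\pi}$ normalization from \eqref{1.1} should appear in front of $\int h\,\mathcal{L}_{\u}[h]$ (the paper's own proof is loose about this prefactor).
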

\begin{proof}
The constraint $|\u+\boldsymbol{\phi}|^2=1$ a.e. implies that $2\tilde h+\tilde h^2+h^2=0$. Then $\tilde h=-\frac{1}{2}h^2+O(h^3)$. We have
\begin{align}\begin{split}\label{energ-expand-1}
    \mathcal{E}(\u+\boldsymbol{\phi})=&\ \mathcal{E}(\u)+2\int_{\R} \boldsymbol{\phi}\cdot \DRh \u+\int_{\R}\boldsymbol{\phi}\cdot\DRh \boldsymbol{\phi}\\
    =&\ \mathcal{E}(\u)-\int_{\R}(\u\cdot \DRh \u) h^2+\int_{\R}\boldsymbol{\phi}\cdot\DRh \boldsymbol{\phi}+O(h^3).
\end{split}\end{align}
It follows from \eqref{def:1/2-R} that
\begin{align}
    \DRh (h\u^\perp)(x)=(\DRh h(x))\u^\perp(x)+\frac{1}{\pi}P.V.\int_{\R}\frac{\u^\perp(x)-\u^\perp(y)}{|x-y|^2}h(y)dy.
\end{align}
Using $\u^\perp (x)\cdot(\u^\perp(x)-\u^\perp(y))=1-\u(x)\cdot \u(y)=\frac{1}{2}|\u(x)-\u(y)|^2$, we conclude that
\begin{align*}
    \int_{\R}\boldsymbol{\phi}\cdot\DRh\boldsymbol{\phi}=&\int_{\R}(\tilde h\u+h\u^\perp)\cdot \DRh(\tilde h\u +h\u^\perp)\\
    =&\int_{\R}h\u^\perp\cdot \DRh (h\u^\perp)+O(h^3)\\
    =&\int_{\R}h\DRh h+\frac{1}{2\pi}\iint_{\R\times\R}\frac{|\u(x)-\u(y)|^2}{|x-y|^2}h(x)h(y)dxdy+O(h^3).
\end{align*}
Plugging it into \eqref{energ-expand-1}, we get \eqref{energ-expand}.
\end{proof}

For any $v=v_1+\im v_2\in \dot H^{1/2}(\R;\mathbb{C})$ satisfying $u_1v_1+u_2v_2=0$, we can write $v=h\im u $ for some real-valued function $h$ on $\R$. Then one can verify that
\begin{align}\nonumber
    \begin{split}
         &L_u[v]=\DRh u\wedge (h\im u)+\DRh (h\im u)\wedge u\\
     =& [\DRh u\wedge \im u]h-\DRh h(\im u\wedge u)+\frac{1}{\pi}P.V.\int_{\R}\frac{(\im u(x)-\im u(y))\wedge u(x)}{|x-y|^2}h(y)dy\\
     =& (\u\cdot\DRh \u)h-\DRh h-Rh\\
     =&-\mathcal{L}_{\u}[h].
    \end{split}
\end{align}

\begin{lemma}\label{lem:kerR-dayu}
Suppose $u$ is a half-harmonic map from $\R$ to $\S$, then $\text{dim}_{\R} \ker  L_u\geq2|\deg u|+1$. More precisely, for instance, if $u=e^{\im \vartheta}\prod_{j=1}^k(\frac{x-\alpha_j}{x-\bar \alpha_j})^{d_j}$ with $\{\alpha_j\}_{j=1}^k$ are distinct and $d_j\geq 1$, then
\begin{align*}
    \ker L_{u} \supset\textup{span}_{\R}\left\{1,\Re \frac{1}{(x-\bar \alpha_j)^{s}},\Im \frac{1}{(x-\bar\alpha_j)^{s}}: s=1,\cdots,d_j,j=1,\cdots,k.\right\}\im u.
\end{align*}
\end{lemma}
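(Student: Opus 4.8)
The plan is to show directly that each of the stated candidate functions lies in $\ker L_u$, equivalently (by the computation just before the lemma) that $\mathcal{L}_{\u}[h]=0$ for each real-valued $h$ in the spanning set, where $v=h\,\im u$. Since $L_u = J_{\mathcal{S}}(\tilde L_{\tilde u}\circ \mathcal{S}^{-1})$ by Lemma \ref{lem:kerR-kerS}, it does not matter whether we work on $\R$ or $\S$; the cleaner route is to exhibit the kernel elements as boundary traces of holomorphic objects on $\mathbb{H}$ and use the harmonic-extension characterization \eqref{eq:V-Rp}. Concretely, writing $U=\psi_{\vartheta,\vec\alpha}$ for the harmonic extension of $u$, which is holomorphic on $\mathbb{H}$ with $|U|=1$ on $\partial\mathbb{H}$, I would look for $V$ holomorphic on $\mathbb{H}$ of the form $V = g\,U$ where $g$ is a rational function whose boundary values are purely imaginary multiples of real functions — so that $v=\mathrm{Tr}\,V = h\,\im u$ with $h$ real — and then check that $\partial_y U\wedge v + \partial_y V\wedge u = 0$ on the boundary automatically because $V/U$ is imaginary there.

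The key steps, in order: (1) recall from \eqref{eq:V-Rp} that if $V$ is the harmonic (Poisson) extension of $v$ and $V$ happens to be holomorphic on $\mathbb{H}$, then on $\partial\mathbb{H}$ one has $\partial_y V = \im\,\partial_x V$ componentwise (Cauchy–Riemann), and similarly $\partial_y U = \im\,\partial_x U$; so the boundary condition $\partial_y U\wedge v + \partial_y V\wedge u=0$ reduces to an identity that holds whenever $V = g U$ with $g$ imaginary on the boundary — this is exactly the mechanism flagged in the introduction's proof sketch ($W=V/U$ purely imaginary on $\partial\mathbb{H}$). (2) Observe that the differentiation identities are the source: $\partial_\vartheta \psi_{\vartheta,\vec\alpha} = \im\,\psi_{\vartheta,\vec\alpha}$ gives the element $1\cdot\im u$; and $\partial_{\bar\alpha_j}$-type derivatives of $\prod_k\big(\frac{z-\alpha_k}{z-\bar\alpha_k}\big)^{d_k}$ produce $\psi_{\vartheta,\vec\alpha}$ times $\frac{1}{(z-\bar\alpha_j)}$, and iterating (taking higher $\bar\alpha_j$-derivatives, or equivalently noting that $\frac{1}{(z-\bar\alpha_j)^s}U$ for $s=1,\dots,d_j$ are all holomorphic on $\mathbb{H}$ since the pole $\bar\alpha_j$ lies in the lower half-plane) gives $\frac{1}{(z-\bar\alpha_j)^s}U$, $s=1,\dots,d_j$. (3) For each such $V=\frac{1}{(z-\bar\alpha_j)^s}U$, which is holomorphic and bounded near $\partial\mathbb{H}$, its boundary trace is $\frac{1}{(x-\bar\alpha_j)^s}u$; taking real and imaginary parts of the scalar $\frac{1}{(x-\bar\alpha_j)^s}$ and multiplying by $u$ (or by $\im u$ after absorbing constants) gives real-valued $h$'s, because $u$ is $\S$-valued, so $v = h\,\im u$ indeed satisfies $u_1 v_1 + u_2 v_2 = 0$. (4) Count: the listed functions are $1$ together with $\Re (x-\bar\alpha_j)^{-s}$, $\Im (x-\bar\alpha_j)^{-s}$ for $s=1,\dots,d_j$ and $j=1,\dots,k$, i.e. $1 + 2\sum_j d_j = 2|\deg u| + 1$ functions; check they are $\R$-linearly independent (partial fraction uniqueness: distinct poles $\bar\alpha_j$ of distinct orders, plus the constant, are independent over $\R$), hence $\dim_\R \ker L_u \ge 2|\deg u|+1$.

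The main obstacle, and the only place real work is needed, is step (1)–(3): verifying rigorously that $V=gU$ with $g$ a rational function, imaginary on $\R$ and holomorphic on $\mathbb{H}$, produces a genuine element of $\ker L_u$ — i.e. that (a) $V$ really is the Poisson extension of its own boundary trace (true since $V$ is holomorphic and has the right growth/boundedness, so it agrees with the Poisson integral of its boundary values), (b) $v = \mathrm{Tr}\,V \in \dot H^{1/2}(\R)$ (finite energy, which follows from holomorphy and the boundedness of $g$ near $\R$ when the pole $\bar\alpha_j$ is in the open lower half-plane), and (c) the boundary equation in \eqref{eq:V-Rp} is satisfied. For (c) the cleanest computation: with $U,V$ holomorphic on $\mathbb{H}$, on $\partial\mathbb{H}$ we have $\partial_y U = \im\,\partial_x U$ and $\partial_y V = \im\,\partial_x V$ (viewing these as $\mathbb{C}$-valued), and for $\mathbb{C}$-valued $a,b$ the wedge is $a\wedge b = \Im(\bar a b)$; then $\partial_y U\wedge v + \partial_y V\wedge u = \Im\big(\overline{\im U_x}\,v + \overline{\im V_x}\,u\big) = \Re\big(\bar U_x v + \bar V_x u\big)$, and writing $V=gU$, $V_x = g_x U + g U_x$ and $v = g u$ on $\partial\mathbb{H}$ (with $g$ imaginary, $|u|=1$), a short computation collapses this to a multiple of $\Re(\bar g_x) + $ terms that cancel because $g$ is imaginary — i.e. it vanishes precisely when $g$ is imaginary on $\R$ and $\bar U_x U = $ real is used, which holds since $|U|^2=1$ on $\R$ forces $\Re(\bar U U_x)=0$, and combined with $\DRh u\wedge u=0$ (the half-harmonic equation). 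I would organize this as a single lemma: "if $g$ is rational, holomorphic on $\overline{\mathbb{H}}$ near $\R$, and $g(\R)\subset \im\R$, then $\mathrm{Tr}(gU)\in\ker L_u$," then apply it to $g \in \{\,\im,\ \im\Re(x-\bar\alpha_j)^{-s},\ \im\Im(x-\bar\alpha_j)^{-s}\,\}$, noting all have poles only at $\bar\alpha_j\in\overline{\mathbb{H}}^{\,c}$.
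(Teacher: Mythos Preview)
The paper's proof is the standard one-line argument: since $\{\psi_{\vartheta,\vec\alpha}\}$ is a smooth family of half-harmonic maps, differentiating in any real parameter lands in $\ker L_u$. The paper parametrizes via the polynomial coefficients, writing
\[
u=e^{\im\vartheta}\,\frac{x^d+c_{d-1}x^{d-1}+\cdots+c_0}{x^d+\bar c_{d-1}x^{d-1}+\cdots+\bar c_0},
\]
computes $\partial_\vartheta u$, $\partial_{\Re c_l}u$, $\partial_{\Im c_l}u$ for $l=0,\dots,d-1$, and identifies the results with the claimed basis via partial fractions. No harmonic extension or boundary analysis is used for this direction at all.

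Your route---constructing $V=gU$ with $g$ rational and purely imaginary on $\R$, and verifying the boundary condition in \eqref{eq:V-Rp} directly---is a genuine alternative (it runs the mechanism of Lemma~\ref{lem:V-holo} in reverse), but your execution has a concrete error. You assert that the relevant $g$'s ``have poles only at $\bar\alpha_j\in\overline{\mathbb{H}}^{\,c}$''. This is false: a rational $g$ that is purely imaginary on $\R$ satisfies $g(z)=-\overline{g(\bar z)}$ by Schwarz reflection, so any pole at $\bar\alpha_j$ forces one of the same order at $\alpha_j\in\mathbb{H}$. Concretely, the holomorphic extension of $\im\,\Re(x-\bar\alpha_j)^{-s}$ is $\tfrac{\im}{2}\bigl((z-\bar\alpha_j)^{-s}+(z-\alpha_j)^{-s}\bigr)$. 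The restriction $s\le d_j$---which your write-up never justifies---arises precisely because the pole of $g$ at $\alpha_j$ must be absorbed by the zero of $U$ there, so that $V=gU$ is holomorphic on $\mathbb{H}$ and hence agrees with the Poisson extension of its trace. For $s>d_j$ your $V$ is not harmonic on $\mathbb{H}$ and the argument collapses. A second slip: your ``$\partial_{\bar\alpha_j}$-type derivatives'' are Wirtinger derivatives, not derivatives in a real parameter; since $\ker L_u$ is only an $\R$-vector space (multiplication by $\im$ destroys the constraint $\Re(\bar u v)=0$), invoking $\partial_{\bar\alpha_j}$ as a source of kernel elements is not legitimate without first passing to $\partial_{\Re\alpha_j}$, $\partial_{\Im\alpha_j}$. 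Once both points are fixed your approach goes through, but it is considerably longer than the paper's.
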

\begin{proof} Assume $d=\deg u>0$.
It is known that
$u$ takes form \eqref{intro:1/2-H-form} if and only if there exists $c_0,\cdots,c_{d-1}\in \mathbb{C}$ and $\vartheta\in \S$ such that
\begin{align}\label{rational-R}
    u=e^{\im \vartheta}\frac{x^d+c_{d-1}x^{d-1}+\cdots+c_1x+c_0}{x^d+\bar c_{d-1}x^{d-1}+\cdots+\bar c_1x+\bar{c}_0}
\end{align}
with $x^d+c_{d-1}x^{d-1}+\cdots+c_1x+c_0$ has zeros all in $\mathbb{H}$. This fact comes from the theorem 3.3.2 in \cite{garcia2018finite} (page 43) and conformal equivalence between $\D$ and $\mathbb{H}$.

It is clear that changing the parameters $\vartheta,c_{d-1},\cdots,c_0$ (complex numbers) of $u$ continuously
yields a family of half-harmonic maps. Therefore, it generates kernel maps for the linearized operators $ L_u$. Taking derivatives of $u$ on $\vartheta$, we get $\{\im u\}\subset \ker L_u$. It is easy to see that for $l=0,\cdots,d-1$,
\begin{align}
   & \partial_{c_j}u=e^{\im \vartheta}\frac{x^{l}}{x^d+\bar c_{d-1}x^{d-1}+\cdots+\bar c_1x+\bar c_0}=\frac{x^{l}u}{\prod_{j=1}^k (x-\alpha_j)^{d_j}},\\
    &\partial_{\bar c_j}u=\frac{-x^{l}u}{x^d+\bar c_{d-1}x^{d-1}+\cdots+\bar c_1x+\bar c_0}=\frac{-x^{l}u}{\prod_{j=1}^k (x-\bar\alpha_j)^{d_j}}.
\end{align}
Note that $x\in\R$. Taking derivative of $u$ on the real part of $c_j$, we get
\begin{align}
\begin{split}\label{ker-real-pt}
    \partial_{c_j}u+\partial_{\bar c_j}u
    = 2\im u\  \text{Im\,}\frac{x^{l}}{\prod_{j=1}^k (x-\bar\alpha_j)^{d_j}}.
\end{split}
\end{align}
Taking derivative of $u$ on the imaginary part of $c_j$, we get
\begin{align}\label{ker-img-pt}
    \im(\partial_{c_j}u-\partial_{\bar c_j}u)= 2\im u\ \Re\frac{x^{l}}{\prod_{j=1}^k (x-\bar\alpha_j)^{d_j}}.
\end{align}
Therefore, the $\R$-linear combination of them must belong to $\ker  L_u$.
\begin{align*}
    \begin{split}
        \ker  L_u\supset&\ \text{span}_{\R}\left\{1,\Re\frac{x^{l}}{\prod_{j=1}^k (x-\bar\alpha_j)^{d_j}},\text{Im\,}\frac{x^{l}}{\prod_{j=1}^k (x-\bar\alpha_j)^{d_j}}:l=0,\cdots,d-1\right\}\im u\\
        =&\ \textup{span}_{\R}\left\{1,\Re \frac{1}{(x-\bar \alpha_j)^{s}},\Im \frac{1}{(x-\bar\alpha_j)^{s}}: s=1,\cdots,d_j,j=1,\cdots,k.\right\}\im u.
    \end{split}
\end{align*}
Obviously they are all $\R$-linearly independent, therefore $\text{dim}_{\R}\ker  L_u\geq 2|d|+1$.

If $d=0$, then obviously $\{\im u\}\subset\ker  L_u$ and thus $\dim_{\R}\ker L_u\geq 1$. If $d<0$, one can prove similarly as above by working on the conjugate of $u$.
\end{proof}

Suppose that $\boldsymbol{V}=(V_1,V_2)$. We abuse the notation $V=V_1+\im V_2$ and denote it a complex-valued function defined on upper half plane $\mathbb{H}$. We also adopt the notation $\partial_z=\frac{1}{2}(\partial_x-\im\partial_y)$, $\partial_{\bar z}=\frac{1}{2}(\partial_x+\im\partial_y)$.

\begin{lemma}\label{lem:V-holo}
Suppose $u$ is half-harmonic map from $\R$ to $\S$.
If $v\in \ker L_u$  with $\deg u>0$ ($\deg u<0$), then $V$ is holomorphic (anti-holomorphic) in $\mathbb{H}$. Moreover, $V$ can be extended to a meromorphic (anti-meromorphic) function with possible poles at poles of $U$. In addition, $V\circ \mathfrak{C}^{-1}$ is smooth on $\overline{\D}$.
\end{lemma}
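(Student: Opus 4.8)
The plan is to use the Hopf-differential technique outlined in the "Comments on the proofs" section. Since $u$ is a half-harmonic map with $\deg u > 0$, Theorem \ref{thm:class} tells us that its harmonic extension $U$ equals $\psi_{\vartheta,\vec\alpha}$, which is holomorphic on $\mathbb{H}$ (the case $\deg u<0$ being handled by passing to the conjugate, so that $U$ is anti-holomorphic). In complex notation this means $\partial_{\bar z}U=0$ in $\mathbb{H}$, equivalently $U_z$ is holomorphic and $\bar U_z$ vanishes. The goal is to show that the auxiliary quantity
\begin{align*}
\tilde\Phi := 2 U_z \overline{V_z} = \partial_x\boldsymbol{U}\cdot\partial_x\boldsymbol{V}-\partial_y\boldsymbol{U}\cdot\partial_y\boldsymbol{V}-\im(\partial_x\boldsymbol{U}\cdot\partial_y\boldsymbol{V}+\partial_x\boldsymbol{V}\cdot\partial_y\boldsymbol{U})
\end{align*}
vanishes identically on $\mathbb{H}$. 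Granting this, since $U_z$ is a nonzero holomorphic function (it has only isolated zeros), $\overline{V_z}=0$ away from those zeros, hence $V_z$ is anti-... wait — $\overline{V_z}=0$ forces $V_z=0$ on a dense open set, and by continuity $V$ is holomorphic on all of $\mathbb{H}$. To prove $\tilde\Phi\equiv 0$, I would first check that $\tilde\Phi$ is holomorphic in $\mathbb{H}$: indeed $\partial_{\bar z}\tilde\Phi = 2(\partial_{\bar z}U_z)\overline{V_z}+2U_z\partial_{\bar z}\overline{V_z} = 2 U_z\,\overline{\partial_z V_z}$, and since $U,V$ are both harmonic, $\partial_z V_z = \tfrac14\Delta V=0$ and $\partial_{\bar z}U_z=\tfrac14\Delta U=0$; so $\tilde\Phi$ is holomorphic. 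Then it suffices to show $\Im\tilde\Phi = -(\partial_x\boldsymbol{U}\cdot\partial_y\boldsymbol{V}+\partial_x\boldsymbol{V}\cdot\partial_y\boldsymbol{U})$ vanishes on the boundary $\partial\mathbb{H}=\R$: a holomorphic function on $\mathbb{H}$ with vanishing imaginary part on the real axis extends by Schwarz reflection to an entire function, and combined with decay at infinity (which follows from the explicit rational form of $U$ and the regularity/decay of $V$ established below) must be identically zero.

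The boundary computation is where I would spend the most care. On $\R$ we use the constraint $\boldsymbol{u}\cdot\boldsymbol{v}=0$ and the linearized equation. Differentiating $\boldsymbol{u}\cdot\boldsymbol{v}=0$ tangentially gives $\partial_x\boldsymbol{u}\cdot\boldsymbol{v}+\boldsymbol{u}\cdot\partial_x\boldsymbol{v}=0$ on $\R$. Also, since $|\boldsymbol{u}|=1$, we have $\boldsymbol{u}\cdot\partial_x\boldsymbol{u}=0$; and from the half-harmonic and linearized equations, $\partial_y\boldsymbol{U} = (\boldsymbol{u}\cdot\DRh\boldsymbol{u})$-type multiple of $\boldsymbol{u}$ plus a correction, while $\partial_y\boldsymbol{V}\wedge\boldsymbol{u}+\partial_y\boldsymbol{U}\wedge\boldsymbol{v}=0$ on $\partial\mathbb{H}$ by \eqref{eq:V-Rp}. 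I would combine these facts — the Neumann-type boundary conditions for $U$ and $V$, the orthogonality $\boldsymbol{u}\cdot\boldsymbol{v}=0$, and its tangential derivative — to express $\partial_x\boldsymbol{U}\cdot\partial_y\boldsymbol{V}+\partial_x\boldsymbol{V}\cdot\partial_y\boldsymbol{U}$ on $\R$ in terms of quantities that manifestly cancel. The key algebraic identity is that both $\partial_y\boldsymbol{U}$ and $\partial_y\boldsymbol{V}$ decompose into their components along $\boldsymbol{u}$ and along $\boldsymbol{u}^\perp$, and the linearized equation pins down the $\boldsymbol{u}^\perp$-component of $\partial_y\boldsymbol{V}$ in terms of $\partial_y\boldsymbol{U}\cdot\boldsymbol{u}^\perp$ and $v$; feeding this in, the imaginary part of $\tilde\Phi$ on $\R$ collapses to zero.

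Once $V$ is known to be holomorphic on $\mathbb{H}$, the meromorphic extension and smoothness on $\overline{\D}$ follow more routinely. Writing $W=V/U$ (valid since $U=\psi_{\vartheta,\vec\alpha}$ is a nonvanishing holomorphic function on $\mathbb{H}$ away from its zeros — actually $\psi_{\vartheta,\vec\alpha}$ has no zeros in $\mathbb{H}$, its zeros $\alpha_k$ lie in $\mathbb{H}$; I will need to treat those as removable or as at worst poles of $V$), $W$ is meromorphic on $\mathbb{H}$; and since on $\partial\mathbb{H}$ we have $\boldsymbol{u}\cdot\boldsymbol{v}=0$, i.e. $\Re(\bar u v)=0$, which after dividing by $|u|^2=1$ says $\Re(\bar u v)=\Re(W |u|^2)=\Re W=0$, so $W$ is purely imaginary on $\R$. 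Schwarz reflection then extends $W$ to a meromorphic function on all of $\mathbb{C}$, and the membership $v\in\dot H^{1/2}(\R)$ together with the rational form of $U$ forces $W$ to have polynomial growth at infinity (no essential singularity), hence $W$ is rational and $V=WU$ is meromorphic on $\mathbb{C}$ with poles only among the poles of $U$, i.e. at $\bar\alpha_k$. Pulling back by the Cayley transform $\mathfrak{C}$ sends these to a rational function on $\overline{\D}$ with no poles on $\overline{\D}$ (since $\bar\alpha_k\notin\overline{\mathbb{H}}$ maps into the exterior of $\overline{\D}$), giving smoothness on $\overline{\D}$. The main obstacle is the boundary identity $\tilde\Phi=0$ on $\R$; the rest is bookkeeping with the explicit Blaschke form and standard complex analysis.
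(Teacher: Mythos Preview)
Your overall strategy matches the paper's: define $\tilde\Phi=2U_z(\bar V)_z$, show it is holomorphic with vanishing imaginary part on $\R$, conclude $\tilde\Phi\equiv 0$ and hence $V$ holomorphic; then form $W=V/U$, note $\Re W=0$ on $\R$, and extend by Schwarz reflection. Two repairable slips along the way. First, your complex expression should be $2U_z(\bar V)_z$ with $(\bar V)_z=\partial_z\bar V$, not $2U_z\overline{V_z}$ --- as written, the identity ``$\partial_zV_z=\tfrac14\Delta V$'' is false (that quantity is $V_{zz}$), whereas with the correct object one has $\partial_{\bar z}[(\bar V)_z]=\tfrac14\overline{\Delta V}=0$. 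Second, your reason for $\tilde\Phi\equiv 0$ is circular, since it invokes decay of $V$ that you are trying to establish; the paper instead observes that $U,V\in\dot H^1(\R^2_+)$ gives $\tilde\Phi\in L^1(\R^2_+)$, so after odd reflection $\Im\tilde\Phi$ is an $L^1$ harmonic function on $\mathbb{C}$, hence zero, and then $\tilde\Phi$ is a real-valued holomorphic $L^1$ function, hence identically zero.

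The substantive gap is in the final step. You assert that $v\in\dot H^{1/2}(\R)$ together with the rational form of $U$ forces $W$ to have polynomial growth at $\infty$, and call the rest ``bookkeeping''. But this is precisely the nontrivial point: you have not excluded an essential singularity of $W$ (equivalently of $V\circ\mathfrak{C}^{-1}$) at the single boundary point $\im\in\partial\D$ corresponding to $\infty\in\partial\mathbb{H}$, and finite Dirichlet energy on the interior half-disk does not by itself rule this out. The paper does \emph{not} argue growth of $W$ directly here. Instead it notes that $v\circ\mathcal{S}^{-1}\in\ker\tilde L_{u\circ\mathcal{S}^{-1}}$ (Lemma~\ref{lem:kerR-kerS}), so the entire Hopf-differential and reflection argument can be rerun with a \emph{second} biholomorphism $\mathbb{H}\to\D$, for instance $z\mapsto(z-\im)/(z+\im)$; this yields holomorphy of $V\circ\mathfrak{C}^{-1}$ on $\overline{\D}\setminus\{1\}$. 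Combined with the first pass, which gave $\overline{\D}\setminus\{\im\}$, one covers all of $\overline{\D}$. Only after this is in hand does the paper (in the proof of Theorem~\ref{intro:thm:non}) deduce that $W$ is bounded at $\infty$ and hence rational --- the reverse of the order you propose.
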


\begin{proof}
We just prove the case $\deg u>0$. Define the Hopf differential
\begin{align}
    \Phi=2U_z\bar V_z=\partial_x \boldsymbol{U}\cdot \partial_x \boldsymbol{V}-\partial_y \boldsymbol{U}\cdot \partial_y \boldsymbol{V}-\im(\partial_x \boldsymbol{U}\cdot \partial_y \boldsymbol{V}+\partial_x \boldsymbol{V}\cdot \partial_y \boldsymbol{U}).
\end{align}
Here in the middle we are using complex multiplication and $\bar V_z=\partial_z \bar V$. Since $U$ is holomorphic, then
\[\Phi_{\bar z}=2U_{z\bar z}\bar V_z+2U_z\bar V_{z\bar z}=2U_z\overline{V_{z\bar z}}=\frac{1}{2}U_z\overline{\Delta V}=0.\]
Therefore $\Phi$ is holomorphic in $\mathbb{H}$. We claim that $\text{Im}\, \Phi=0$ on $\partial \mathbb{H}$. To see that, the boundary condition  in \eqref{eq:V-Rp} means
\begin{align}
    0=\partial_yV\wedge u+(\u\cdot \partial_y\boldsymbol{U})u\wedge v=(\partial_y V-(\u\cdot \partial_y \boldsymbol{U})v)\wedge u\quad \text{on }\partial \mathbb{H},
\end{align}
which means
\begin{align}\label{V-bd}
    \partial_yV=(\u\cdot \partial_y \boldsymbol{U})v+(\u\cdot \partial_y \boldsymbol{V})u,\quad\text{ on }\partial \mathbb{H}.
\end{align}
Since $\partial_x\u\cdot \u=0$ and $\partial_x \u\cdot \boldsymbol{v}+\u\cdot \partial_x\boldsymbol{v}=0$ on $\partial \mathbb{H}$, then
\begin{align}
    \partial_x \u\cdot \partial_y \boldsymbol{V}+\partial_x \boldsymbol{v}\cdot \partial_y \boldsymbol{U}=(\u\cdot \partial_y\boldsymbol{U})\partial_x\u\cdot \boldsymbol{v}+(\u\cdot \partial_y \boldsymbol{U})\partial_x \boldsymbol{v}\cdot \u=0.
\end{align}

Thus we have shown that $g(z):=\text{Im}\,\Phi$ vanishes on $\partial \mathbb{H}$. By odd reflection across $\partial \mathbb{H}$, we can extend the harmonic function $g$ to all of $\mathbb{C}$. However, since $g$ is harmonic and $g\in L^1(\Rp)$ because $U,V\in \dot H^1(\Rp)$, we conclude that $g\equiv 0$ on $\mathbb{C}$. Thus $\Phi$ is real-valued and holomorphic, which implies that $\Phi$ is constant. Since $\Phi\in L^1(\Rp)$, we deduce that $\Phi(z)\equiv 0$.

Now we have $\Phi=2U_z\bar V_z=0$. Since $U_z$ only have isolated zeros in $\mathbb{H}$, then $\bar V_z=0$. Consequently, $V$ is holomorphic.

We shall rewrite $U$ as
\begin{align}\label{U-different}
    U(z)=e^{\im\vartheta}\prod_{j=1}^k\left(\frac{z-\alpha_j}{z-\bar \alpha_j}\right)^{d_j}
\end{align}
with $\alpha_j\in \mathbb{H}$ which are different from each other. Then $d_1+\cdots+d_k=d$.
Now define $W=V/U$. Then the previous argument implies that $W$ is a meromorphic function on $\mathbb{H}$ with possible poles at $\{\alpha_1,\cdots,\alpha_k\}$. The orthogonality condition says that $u_1v_1+u_2v_2=0$ on $\partial \mathbb{H}$. Therefore the real part of $ W$ vanishes on $\partial\mathbb{H}$ because
\begin{align}
    W=\frac{v_1+\im v_2}{u_1+\im u_2}=\frac{u_1v_1+u_2v_2}{u_1^2+u_2^2}+\im\frac{u_1v_2-u_2v_1}{u_1^2+u_2^2}.
\end{align}

By Schwarz reflection principle, we can extend $W$ to a meromorphic function on $\mathbb{C}$ (we still denote it as $W$) by $
W(z)=-\overline{W(\bar z)}$ for the lower half plane. Now $W$ has possible poles at $\{\alpha_1,\cdots,\alpha_k,\bar{\alpha}_1,\cdots,\bar{\alpha}_k\}$. The order of $W$ at $\alpha_j$ (or $\bar \alpha_j$) is at most $d_j$. Since $U$ is a meromorphic function on $\mathbb{C}$, then so does $V=WU$. The poles of $V$ are contained in $\{\bar{\alpha}_1,\cdots,\bar{\alpha}_k\}$.

Note that $\bar\alpha_j$, $j=1,\cdots,k$, are all away from the $\partial \mathbb{H}$. Therefore $V$ is holomorphic at any point on $\partial \mathbb{H}$. Since $\mathfrak{C}$ is a holomorphic on $\overline{\mathbb{H}}$, then $V\circ \mathfrak{C}^{-1}$ is holomorphic in $\overline{\mathbb{D}}\setminus\{\text{i}\}$. Because $\dot H^{1/2}(\R;\mathbb{C})$ is isometric to $\dot H^{1/2}(\S;\mathbb{C})$ through the stereographic projection, then $v\circ \mathfrak{C}^{-1}\in \dot H^{1/2}(\S;\mathbb{C})$. Thus $v\circ \mathfrak{C}^{-1} \in \ker \tilde L_{u\circ \mathfrak{C}^{-1}}$.
However, we can repeat the above whole process by using another bi-holomorphic mapping between $\mathbb{H}$ and $\mathbb{D}$, say $(z-\im)/(z+\im)$, to show that $V\circ \mathfrak{C}^{-1}$ is holomorphic in $\overline{\mathbb{D}}\setminus\{1\}$. Combining with the previous statement, we know $V\circ \mathfrak{C}^{-1}$ is holomorphic in $\overline{\mathbb{D}}$.
\end{proof}


Now we can prove the Theorem \ref{intro:thm:non}.
\begin{proof}[{\bf Proof of Theorem \ref{intro:thm:non}}]
We shall assume $\deg u>0$ and $U$ takes the form \eqref{U-different}. Suppose $v\in\ker L_u$ and $V$ is the harmonic extension to $\Rp$. Define $W=V/U$. Lemma \ref{lem:V-holo} implies that $W$ is meromorphic on $\mathbb{C}$ with possible poles at $\{\alpha_1,\cdots,\alpha_k,\bar{\alpha}_1,\cdots,\bar{\alpha}_k\}$. The order of $W$ at $\alpha_j$ (or $\bar \alpha_j$) is at most $d_j$.

We claim that $W$ is bounded at infinity. Indeed, this just follows from the fact that $V\circ \mathfrak{C}^{-1}$ is holomorphic in $\overline{\mathbb{D}}$ and consequently $V$ is bounded on $\overline{\mathbb{H}}$.

Thus $W$ is a rational function. There exists some polynomial $P(x)$ such that
\begin{align}
    W(z)=\frac{P(z)}{Q(z)},\quad Q(x)=\prod_{j=1}^k(z-a_j)^{d_j}(z-\bar a_j)^{d_j}.
\end{align}
The boundedness of $W$ implies $\deg P\leq 2d$. Since $W(z)=-\overline{W(\bar z)}$ and $Q(z)=\overline{Q(\bar{z})}$, it holds that $P(z)=-\overline{P(\bar z)}$. If $P(z)=c_0+c_1z+\cdots+c_{2d}z^{2d}$, then one must have $c_j=-\bar c_j$. The dimension of all such polynomials is $2d+1$. Thus $\dim_{\R}\ker L_u\leq 2|\deg u|+1$.  Applying  Lemma \ref{lem:kerR-kerS} and Lemma \ref{lem:kerR-dayu}, we conclude $\dim_{\R}\ker L_u=2|\deg u|+1$. Furthermore, if $u=e^{\im \vartheta}\prod_{j=1}^k(\frac{x-\alpha_j}{x-\bar \alpha_j})^{d_j}$ with $\{\alpha_j\}_{j=1}^k$ are distinct and $d_j\geq 1$, then
\begin{align*}
    \ker L_{u} =\textup{span}_{\R}\left\{1,\Re \frac{1}{(x-\bar \alpha_j)^{s}},\Im \frac{1}{(x-\bar\alpha_j)^{s}}: s=1,\cdots,d_j,j=1,\cdots,k.\right\}\im u.
\end{align*}
\end{proof}
\begin{remark}
Translating the above results to half-harmonic map $\tilde u:\S\to \S$ (see \eqref{def:phi}), we can get
\begin{align*}
\ker \tilde L_{\tilde u}=\textup{span}_{\R}\left\{1
, \textup{ Im\,}\frac{\xi^j}{\prod_{i=1}^d(\xi-a_i)},\textup{Re\,}\frac{\xi^j}{\prod_{i=1}^d(\xi-a_i)}:j=0,\cdots,d-1\right\}\im \tilde u.
\end{align*}
In particular, if $\tilde u=\xi^m$, then
\begin{align}
    \ker \tilde L_{\xi^m}=\im \xi^m\ \textup{span}_{\R}\{1, \textup{Im\,} \xi^{j-m},  \Re \xi^{j-m}:j=0,\cdots,m-1\}.
\end{align}
Using $\xi=\cos \theta+\im\sin\theta$ on $\S$, we can rewrite it as
\begin{align}
    \ker \tilde L_{\xi^m}=\im \xi^m\ \textup{span}_{\R}\{1,\sin j\theta,\cos j\theta:j=1,\cdots,m\}.
\end{align}
This recovers the result of \citet[Proposition 6.2]{enno2018energy}.
\end{remark}

\section{Stability of half-harmonic map}\label{sec:stab-1}
We only need to show the Theorem \ref{intro:thm:sta-type1} for $d=1$. The negative case follows from taking complex conjugate in the proof.  The case $d=\pm 1$ has a flavor of the main theorem proved by \citet{hirsch2021note} about the stability of harmonic maps $\mathbb{S}^2\to \mathbb{S}^2$ with degree $\pm1$. We will use induction to prove a local quantitative stability for higher degree.


Denote the set of all maps in $\dot H^{1/2}(\S;\S)$ with degree $d$ by
\begin{align}
    \mathcal{A}_{\S}^d=\{u\in\dot H^{1/2}(\S;\S):\deg\, u=d\}.
\end{align}
\begin{lemma}\label{lem:mean=0}
For any $u\in \mathcal{A}_{\S}^{d}$ with $|d|\geq 1$, there exists $a_0\in \D$ such that $\fint_{\S}u\circ\phi_{0,a_0} =0$.
\end{lemma}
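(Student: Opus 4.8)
The plan is to use a topological (degree-theoretic) argument on the map that sends a point $a \in \D$ to the average $\fint_{\S} u \circ \phi_{0,a}\, d\theta \in \overline{\D}$. Concretely, define $F: \D \to \overline{\D}$ by $F(a) = \fint_{\S} (u \circ \phi_{0,a})(e^{\im\theta})\, d\theta$, where $\phi_{0,a}(\xi) = \frac{\xi - a}{1 - \bar a \xi}$ is the M\"obius transformation of $\D$ fixing no marked point but sending $0 \mapsto -a$ (so $a$ is a free parameter). Since $u$ maps into $\S$, the average $F(a)$ lies in the \emph{closed} unit disk $\overline{\D}$ by the triangle inequality, with $|F(a)| = 1$ only if $u \circ \phi_{0,a}$ is a.e.\ constant, which is impossible for $\deg u = d \neq 0$. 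Hence $F$ actually maps $\D$ into the open disk $\D$. The goal is to show $F$ has a zero.

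First I would establish that $F$ is continuous on $\D$; this is routine since $a \mapsto u\circ\phi_{0,a}$ is continuous from $\D$ into $L^1(\S;\S)$ (indeed $\phi_{0,a}$ depends continuously on $a$ and $u \in \dot H^{1/2} \subset L^2$, and composition with these uniformly bi-Lipschitz-on-compacta diffeomorphisms is continuous). Second, and this is the crux, I would analyze the boundary behavior of $F$ as $|a| \to 1$: as $a \to b \in \S$, the M\"obius map $\phi_{0,a}$ concentrates, pushing almost all of $\S$ (in measure) toward the single point that $\phi_{0,a}$ sends near $b$'s antipodal-type image. More precisely, for $|a|$ close to $1$, $\phi_{0,a}$ maps all of $\S$ except a shrinking arc near $a/|a|$ into a small neighborhood of $-a/|a|$, so $u \circ \phi_{0,a}$ is close in $L^1$ to the constant $u(-a/|a|)$ — wait, one must track this carefully using the classification that $u$ has a well-defined trace a.e. The upshot should be that $F$ extends continuously to a map $\overline{\D} \to \overline{\D}$ whose restriction to $\S$ is, up to reparametrization, essentially $b \mapsto u(\sigma(b))$ for some homeomorphism $\sigma$ of $\S$ — a map of degree $d \neq 0$ from $\S$ to $\S$. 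A continuous map $\overline{\D} \to \overline{\D}$ whose boundary restriction has nonzero degree must be surjective onto $\D$, hence $F$ vanishes somewhere; this is the standard consequence of the nonexistence of a retraction / Brouwer-type argument (if $0 \notin F(\D)$ then $F/|F|$ would retract $\overline\D$ onto $\S$ along a boundary map of degree $d\neq 0$, contradiction).

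The main obstacle I anticipate is making the boundary analysis rigorous for a general $\dot H^{1/2}$ map rather than a smooth one: one needs that as $a \to b \in \S$ radially, $\fint_\S u\circ \phi_{0,a}$ converges, and identifies the limit with a degree-$d$ boundary map. Because $u\circ\phi_{0,a}$ converges to a constant only in the weak sense and the relevant constant depends on the Lebesgue point structure of $u$, I would either (i) first prove the lemma for $u$ a Blaschke product or a smooth approximation and then pass to the limit using that $D(u)$ and $\deg u$ are continuous and $\phi_{0,a}$-equivariant, or (ii) invoke the known fact (from the theory of VMO/$H^{1/2}$ degree, \cite{brezis1995degree}) that the map $a \mapsto F(a)$ has a continuous extension to $\overline\D$ of degree $d$ on the boundary. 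Given the paper already uses \eqref{deg-series} and the harmonic extension picture heavily, the cleanest route is probably to express $F(a) = \fint_\S u\circ\phi_{0,a} = U_a(0)$ where $U_a$ is the harmonic extension of $u \circ \phi_{0,a}$, i.e.\ $U_a = U \circ \Phi_a$ with $\Phi_a$ the disk automorphism extending $\phi_{0,a}$, so $F(a) = U(\Phi_a(0)) = U(-a)$ evaluated via the mean value property — wait, $\fint_\S u\circ\phi_{0,a}\,d\theta$ is the average with respect to normalized arclength, which is \emph{not} the harmonic-extension value of $U\circ\Phi_a$ at $0$ unless $\Phi_a$ is conformal, but it is: $\Phi_a$ is conformal on $\D$, and arclength pulls back to harmonic measure, so indeed $F(a) = U(\Phi_a(0))$ where $U$ is the harmonic extension of $u$. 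Then $a \mapsto \Phi_a(0)$ is a homeomorphism of $\D$ onto $\D$, and $F(a) = 0 \iff U(\Phi_a(0)) = 0 \iff U$ has a zero in $\D$. So it remains only to show the harmonic extension $U$ of any $u \in \mathcal{A}_\S^d$ with $d \neq 0$ has a zero in $\D$: if not, $U/|U|$ together with $u/|u| = u$ on $\S$ would give a continuous map $\overline\D \to \S$ of nonzero boundary degree, again impossible. This last step is the heart of the matter and is where I would focus the write-up.
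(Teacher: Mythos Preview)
Your overall plan --- define $F(a)=\fint_\S u\circ\phi_{0,a}$, extend to $\overline\D$, and use a degree/retraction argument --- is exactly the paper's approach, including the anticipated obstacle and fix: the paper first treats $u\in C^\infty(\S;\S)$, shows directly that $F(r\zeta)\to u(\zeta)$ uniformly as $r\to 1$, applies Leray--Schauder degree, and then passes to general $u\in\dot H^{1/2}$ by smooth approximation together with a compactness argument for the resulting $a_j$.

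Your ``cleanest route'' via the identity $F(a)=U(-a)$ is a genuinely different and shorter path. (The paper actually uses this identity later, in Claim~\ref{claim:reduce=0} of the proof of Theorem~\ref{intro:thm:local}, but not in the proof of Lemma~\ref{lem:mean=0} itself.) It reduces the lemma to the single statement that the harmonic extension $U$ of any $u\in\mathcal A_\S^d$ with $d\neq 0$ vanishes somewhere in $\D$. However, your justification of that statement has a gap: you write that $U/|U|$ glued with $u$ on $\S$ ``would give a continuous map $\overline\D\to\S$'', but for general $u\in\dot H^{1/2}$ the harmonic extension $U$ is \emph{not} continuous up to $\partial\D$, so this gluing is not available. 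The repair is exactly the Brezis--Nirenberg fact you allude to: since $\dot H^{1/2}(\S)\subset\mathrm{VMO}(\S)$, one has $|U(z)|\to 1$ uniformly as $|z|\to 1$; hence for $r$ near $1$ the map $U_r(\theta)=U(re^{\im\theta})$ is continuous and nonvanishing on $\S$ with $\deg(U_r/|U_r|)=d$, and since $U_r$ extends continuously to $\overline{\D_r}$ by $U$ itself, degree theory on $\D_r$ forces a zero of $U$. With that correction your alternative route is complete and arguably more direct than the paper's approximation argument.
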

\begin{proof}
Assume first $u\in \cnS\cap C^\infty$. Since $|\deg u|>1$, then $u$ is surjective. We will write $\phi_{0,a}=\phi_a$ for short. Define
\begin{align}
    F(a)=\fint_{\S} u\circ \phi_{-a}:=\frac{1}{2\pi}\int_{\S} u\circ \phi_{-a}.
\end{align}
One readily knows $F$ is continuous and maps $\D$ to $\D$ and $F(0)=\fint_{\S}u$.
For any $\zeta\in \partial \D=\S$, one has
\[\lim_{r\to 1^-}\phi_{-r\zeta}(\theta)=\zeta,\quad \forall\, \theta\neq -\zeta\]
and this convergence is uniform on $\{a\in\overline\D: |a+ \zeta|\geq \epsilon\}$ for any $\epsilon>0$. This implies that $\lim_{r\to 1^-}F(r\zeta)=u(\zeta)\in \S$ uniformly in $\zeta$. Hence $F$ is a continuous function on $\overline{\D}$. Moreover, the Lerary-Schauder degree of $F$ with respect to 0 is the same as the winding number of $u$ (see \cite{outerelo2009mapping}). By  Leray-Schauder degree theory, there exists $ a_0\in \D$ such that  $F(a_0)=0$.

In the general case of a map $u\in \mathcal{A}^1_{\S}$, one can approximate $u$ by a sequence of $u_j\in C^\infty(\S;\S)$ with the property that
\[u_j\to u \text{ strongly with } u\in \dot H^{1/2}(\S;\S)\quad \text{and }\quad \deg u_j=\deg u.\]

Going to a subsequence if necessary, we can assume $u_j\to u$ a.e. $\S$. By the previous paragraph, there exists $a_j\in \D$ such that
\[\fint_{\D} u_j\circ\phi_{a_j}=0.\]

We must have $|a_j|<1-\epsilon_0$. Suppose not, then there is a subsequence, which we still label it as $a_j$, converging to some  $-\zeta\in \partial\D$. Then $\phi_{a_j}\to \zeta$ a.e on $\S$  and $u_j\circ\phi_{a_j}\to u(\zeta)$ a.e. on $\S$.
So we can use Dominated Convergence Theorem to infer that
\[u(\zeta)=\fint_{\S}u(\zeta)=\lim_{j\to \infty}\fint_{\S}u_j\circ\phi_{a_j}=0.\]
However, since $|u_j\circ\phi_{a_j}|\equiv 1$,
\[|u(\zeta)|=\fint_{\S}|u(\zeta)|=\lim_{j\to \infty}\fint_{\S}|u_j\circ \phi_{a_j}|=1.\]
This is a contradiction. Now we can assume $a_j\to a_0$ where $a\in \D$ and consequently
\[\fint_{\S}u\circ \phi_{a_0}=0.\]
\end{proof}

\begin{proof}[{\bf Proof of Theorem \ref{intro:thm:sta-type1}}]
By the Lemma \ref{lem:mean=0},  it suffices to prove the stability \eqref{inf_d-D} for $u\in \mathcal{A}_{\S}^1$ with the additional assumption $\fint_{\S} u=0$. This is due to the invariance of energy and the degree under M\"obius transformation of $\D$, namely
\begin{align}
  \mathcal{E}(u)=\mathcal{E}( u\circ \phi_a),\quad \deg  u\circ \phi_a=\deg u=1
\end{align}
holds for any $a\in \D$ and the group structure of M\"obius transformations
\begin{align}
    \label{phiaphib}
    \phi_b\circ\phi_a=\phi_{\frac{a+b}{1+\bar ab}}.
\end{align}

If $D(u)\geq 1$, then
\begin{align}
   D(u)\geq 1=\deg u=\|u\|^2_{\cn}-D(u),
\end{align}
which means $\|u\|_{\cn}^2\leq 2D(u)$. Consequently \eqref{inf_d-D} holds. Thus for the following, we can assume $D(u)<1$.

Suppose that $u=\sum_{k\in \Z}c_kz^k$. By the previous step, we can assume $c_0=0$. Since $u\equiv 1$ a.e. $\S$, then
\begin{align*}
   D(u)=&\|u\|_{\cn}^2-\fint_{\S} u^2
   =\sum_{k\geq 1} [k-1](|c_k|^2+|c_{-k}|^2)\\
   \geq&\frac12\sum_{k\geq 2} k(|c_k|^2+|c_{-k}|^2)=\frac12\|u-c_1e^{\im\theta}-c_{-1}e^{-\im\theta}\|_{\cn}^2.
\end{align*}
That is
\begin{align}\label{u-c1-c_1}
    \|u-c_1e^{\im\theta}-c_{-1}e^{-\im\theta}\|_{\cn}^2\leq 2D(u).
\end{align}
Again $|u|\equiv 1$ a.e. $\S$ and \eqref{u=1-series} implies
\[\sum_{k\geq 1} |c_k|^2+|c_{-k}|^2=1.\]
Combining with the consequence of \eqref{deg-series}, that is
\[1=\deg u=\sum_{k\geq 1} k(|c_k|^2-|c_{-k}|^2),\]
one obtains
\begin{align}
    1-|c_1|^2\leq \sum_{k\geq 2} (|c_k|^2+|c_{-k}|^2)\leq D(u),\label{c1} \\
    |c_{-1}|^2\leq \frac12\sum_{k\geq 2} k(|c_k|^2-|c_{-k}|^2)\leq D(u).\label{c_1}
\end{align}
Therefore plugging in \eqref{c1} and \eqref{c_1} to \eqref{u-c1-c_1}, we obtain
\begin{align*}
    \|u-e^{\im(\theta+\arg c_1)}\|_{\cn}^2&\leq 9\left[2D(u)+\|c_{-1}e^{-\im\theta}\|_{\cn}^2+\|(e^{\im\arg c_1}-c_1)e^{\im\theta}\|^2_{\cn}\right]\\
    &\leq 9[3D(u)+(1-|c_1|)^2]\leq 9[3D(u)+D(u)^2].
\end{align*}
Since we assume $D(u)<1$, then the above inequality implies
\[\|u-e^{\im\vartheta}\id\|_{\cn}^2\leq 36D(u),\]
for some $\vartheta$. Therefore \eqref{inf_d-D} is established.

\end{proof}

Now we plan to prove Theorem \ref{intro:thm:local}. Before that, let us make some preparations.
First, we can expand $\phi_{\vartheta,\vec{b}}$ to be
\begin{align}
    \phi_{\vartheta,\vec{b}}=e^{\im \vartheta}\sum_{k\geq 0} A_k(\vec{b})z^k
\end{align}
where $A_k(\vec{b})$ is some function for $\vec{b}$ for each $k$. For instance, for any $a\in \D$,
\begin{align}
\begin{split}\label{phia-exp}
    \phi_a=\frac{z-a}{1-\bar a z}=(z-a)\sum_{k\geq0}\bar{a}^kz^k
    =-a+\sum_{k\geq 1}(1-|a|^2)\bar{a}^{k-1}z^k.
\end{split}
\end{align}
Since $\phi_{\vartheta,\vec{b}}=e^{\im\vartheta}\phi_{b_1}\cdots\phi_{b_d}$, one knows $A_0(\vec{b})=(-1)^d\prod_{j=1}^db_j$. For any vector $\vec{b}\in \D^d$ and a set $\Omega\in \D$, we abuse the notation that $\vec{b}\in \Omega^d$ if $b=(b_1,\cdots,b_d)$ with $b_i\in \Omega$ for $i=1,\cdots,d$.
\begin{lemma}\label{lem:A-lower-bd}
Suppose $\Omega\Subset \D$ is  compact. There exists a $\epsilon_\Omega>0$ and $N_\Omega$ such that, for any Blaschke product $\phi_{\vartheta,\vec{b}}$ with $\vec{b}\in \Omega^d$, we have
\begin{align}\label{A-lower-bd}
    \max_{1\leq i\leq N_\Omega}\{|A_i(\vec{b})|\}\geq \epsilon_\Omega.
\end{align}
\end{lemma}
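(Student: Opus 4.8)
The plan is to argue by contradiction using a compactness argument. Suppose the statement fails. Then for every $\varepsilon>0$ and every $N$ there is a Blaschke product $\phi_{\vartheta,\vec b}$ with $\vec b\in\Omega^d$ for which $\max_{1\le i\le N}|A_i(\vec b)|<\varepsilon$. Taking $\varepsilon=N^{-1}\to 0$ produces a sequence $\vec b^{(n)}\in\Omega^d$ with $|A_i(\vec b^{(n)})|\to 0$ for every fixed $i\ge 1$ as $n\to\infty$. Since $\Omega^d$ is compact, after passing to a subsequence we may assume $\vec b^{(n)}\to\vec b^\ast\in\Omega^d$. Each coefficient $A_i(\vec b)$ is a polynomial in the entries of $\vec b$ (this follows from expanding the finite product $\phi_{b_1}\cdots\phi_{b_d}$ as a power series, using \eqref{phia-exp}), hence continuous on $\Omega^d$, so $A_i(\vec b^\ast)=0$ for all $i\ge 1$.

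The key step is then to observe that this forces $\phi_{0,\vec b^\ast}$ to be a constant. Indeed, $\phi_{0,\vec b^\ast}(z)=\sum_{k\ge 0}A_k(\vec b^\ast)z^k$ and we have just shown every coefficient with $k\ge 1$ vanishes, so $\phi_{0,\vec b^\ast}\equiv A_0(\vec b^\ast)=(-1)^d\prod_{j=1}^d b_j^\ast$, a constant of modulus $\prod|b_j^\ast|<1$ since each $b_j^\ast\in\Omega\Subset\D$. But $\phi_{0,\vec b^\ast}$ is a finite Blaschke product of degree $d\ge 1$ (here I use $d\ge 1$, which is implicit in the context of Theorem \ref{intro:thm:local}), and a nonconstant finite Blaschke product takes every value on $\S$ on the unit circle $\S$; in particular it is not constant. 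More directly: $\|\phi_{0,\vec b^\ast}\|_{\cn}^2=d\ge 1$ by \eqref{eng-u-d}, whereas a constant map has zero $\cn$-norm, a contradiction. This establishes \eqref{A-lower-bd} with some $\varepsilon_\Omega>0$ and some finite $N_\Omega$.

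One technical point to handle carefully: the contradiction hypothesis only gives, for each $n$, a bound on $\max_{1\le i\le n}|A_i|$, so a priori different coefficients are controlled along different tails of the sequence; but since for any fixed $i$ we have $|A_i(\vec b^{(n)})|<1/n$ for all $n\ge i$, each individual coefficient does tend to $0$, which is all the argument needs. The main (mild) obstacle is simply making the quantifier bookkeeping precise — extracting from "for all $\varepsilon,N$ there exists a bad $\vec b$" a single sequence along which all low-order coefficients vanish in the limit — together with recording that $A_i$ depends polynomially (hence continuously) on $\vec b$ uniformly over $\Omega^d$. Note also that the conclusion is uniform in $\vartheta$ because $|A_i(\vec b)|$ is unchanged by the overall phase $e^{\im\vartheta}$ (it only rescales all $A_i$ by a unimodular factor), so $\vartheta$ plays no role.
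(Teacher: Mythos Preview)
Your proof is correct but takes a different route from the paper's. You argue by compactness and contradiction: if the conclusion failed you could pass to a limit Blaschke product all of whose positive-index Fourier coefficients vanish, which would force it to be constant, contradicting $d\ge 1$. The paper instead gives a direct quantitative argument: from the two Fourier constraints $\sum_{k\ge 0}|A_k|^2=1$ and $\sum_{k\ge 1}k|A_k|^2=d$, together with the uniform bound $|A_0(\vec b)|^2\le 1-\epsilon_1$ on $\Omega^d$, one derives the inequality $d-\epsilon(N)>(N+1)(\epsilon_1-\epsilon(N))$ for $\epsilon(N)=\sum_{1\le k\le N}|A_k|^2$; choosing $N_\Omega$ so that $(N_\Omega+1)\epsilon_1>d$ then forces $\epsilon(N_\Omega)$ to be bounded below by an explicit positive constant depending only on $\epsilon_1$ and $d$. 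Your approach is conceptually cleaner and avoids these manipulations, while the paper's argument has the advantage of producing explicit values of $N_\Omega$ and $\epsilon_\Omega$ in terms of $d$ and the gap $\epsilon_1$. One small wording issue: the coefficients $A_i(\vec b)$ are not holomorphic polynomials in the $b_j$ alone (the expansion \eqref{phia-exp} involves $\bar a$ and $|a|^2$), but they are polynomials in $b_j,\bar b_j$, so the continuity you need is unaffected.
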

\begin{proof}
Let $\epsilon(N)=\sum_{1\leq k\leq N}|A_k(\vec{b})|^2$. By the assumption, there exists $\epsilon_1=\epsilon_1(\Omega)$ such that
\begin{align}\nonumber
    |A_0(\vec{b})|^2\leq 1-\epsilon_1,\quad \forall \,\vec{b}\in\Omega^d.
\end{align}
By the property \eqref{u=1-series}, we get $\sum_{k\geq 0}|A_k(\vec{b})|^2=1$. Then
\begin{align}\nonumber
    \sum_{k\geq N+1}|A_k(\vec{b})|^2\geq \epsilon_1-\epsilon(N).
\end{align}
Since $\deg \phi_{\vartheta,\vec{b}}=d$ and \eqref{deg-series}, we get $\sum_{k\geq1 }k|A_k(\vec{b})|^2=d$. Then
\begin{align}\nonumber
    \sum_{k\geq N+1}k|A_k(\vec{b})|^2\leq d-\epsilon(N).
\end{align}
Compare the above two inequalities, we get
\begin{align}\nonumber
    d-\epsilon(N)>(N+1)(\epsilon_1-\epsilon(N)).
\end{align}
Now we choose $N_\Omega$ such that $(N_\Omega+1)\epsilon_1>d$. One readily see that
\begin{align}\nonumber
    \epsilon(N_\Omega)>\frac{1}{N_\Omega}[(N_\Omega+1)\epsilon_1-d].
\end{align}
This implies \eqref{A-lower-bd} holds for $\epsilon_\Omega=\left(\epsilon(N_\Omega)/N_\Omega\right)^{1/2}$.
\end{proof}

\begin{proposition}\label{prop:H1/2-L2}
Suppose $\Omega\Subset \D$ is  compact. There exists a constant $C_\Omega$ such that, for any $u\in \mathcal{A}_{\S}^d$ and $\vec{b}\in\Omega^d$, we have
\begin{align}
    \|u-\phi_{\vartheta,\vec{b}}\|_{L^2(\S)}^2\leq C_\Omega\left(\|u-\phi_{\vartheta,\vec{b}}\|_{\cn}^2+D(u)+D(u)^2\right).
\end{align}
\end{proposition}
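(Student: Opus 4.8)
The plan is to compare the two norms on the circle by exploiting that $\|\cdot\|_{\cn}^2$ controls the high-frequency tail while the mismatch in the zeroth and first Fourier modes can be bounded by $D(u)$, uniformly as long as the parameter vector $\vec{b}$ stays in a fixed compact set $\Omega\Subset\D$. Write $u=\sum_{k\in\Z}c_kz^k$ and $\phi_{\vartheta,\vec{b}}=e^{\im\vartheta}\sum_{k\ge 0}A_k(\vec{b})z^k$, and set $w=u-\phi_{\vartheta,\vec{b}}=\sum_k e_kz^k$. Then
\begin{align*}
\|w\|_{L^2(\S)}^2=\sum_{k\in\Z}|e_k|^2,\qquad \|w\|_{\cn}^2=\sum_{k\in\Z}|k||e_k|^2,
\end{align*}
so that $\|w\|_{L^2(\S)}^2-\|w\|_{\cn}^2=|e_0|^2+\sum_{|k|=1}0\cdot|e_k|^2-\sum_{|k|\ge 2}(|k|-1)|e_k|^2\le |e_0|^2+|e_1|^2+|e_{-1}|^2$. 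Hence it suffices to prove
\begin{align*}
|e_0|^2+|e_1|^2+|e_{-1}|^2\le C_\Omega\bigl(\|w\|_{\cn}^2+D(u)+D(u)^2\bigr).
\end{align*}

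The negative modes are easy: $e_{-k}=c_{-k}$ for $k\ge 1$ since $\phi_{\vartheta,\vec{b}}$ is holomorphic, so $|e_{-1}|^2=|c_{-1}|^2\le\sum_{k\ge1}|c_{-k}|^2$, and as in the proof of Theorem \ref{intro:thm:sta-type1} one has (using $|u|\equiv 1$ a.e.\ and $\deg u=d$) the bound $2\sum_{k\ge1}|c_{-k}|^2\le\sum_{k\ge1}|k|(|c_k|^2+|c_{-k}|^2)-d=\|u\|_{\cn}^2-d=D(u)$ (taking $\|u\|_{\cn}^2-d=D(u)$, valid here since $d>0$, possibly up to replacing $D$ by $D+D^2$ after accounting for $\fint u^2$ not vanishing — more precisely one uses $\sum_k|c_k|^2=1$ together with $\deg u=d$ to subtract a comparable quantity). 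So $|e_{-1}|^2\le C D(u)$. For the zeroth and first positive modes we instead control them by the $\cn$ norm of $w$ after shifting to a better reference Blaschke product. Writing $e_0=c_0-e^{\im\vartheta}A_0(\vec b)$ and $e_1=c_1-e^{\im\vartheta}A_1(\vec b)$: since $\phi_{\vartheta,\vec{b}}$ has $A_k$ bounded and $\sum_{k\ge1}|k||A_k|^2=d$, and since $u$ satisfies the same normalizations, the discrepancies $|e_0|,|e_1|$ are either already small (comparable to $\|w\|_{L^2}$, which would be circular) or — and this is the point — can be absorbed because $w$ having small high-frequency energy $\|w\|_{\cn}^2$ forces $c_k\approx e^{\im\vartheta}A_k$ for all $k\ge 2$, and then the constraints $\sum|c_k|^2=1=\sum|A_k|^2$ and $\sum k|c_k|^2=d=\sum k|A_k|^2$ pin down $|c_0|,|c_1|$ in terms of $|c_{-1}|$ and the tail, up to errors controlled by $D(u)$.

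Concretely, I would argue: the constraint $\deg u=d$ gives $\sum_{k\ge1}k(|c_k|^2-|c_{-k}|^2)=d=\sum_{k\ge1}k|A_k|^2$, so $\sum_{k\ge1}k(|c_k|^2-|A_k|^2)=\sum_{k\ge1}k|c_{-k}|^2\le C D(u)$ by the negative-mode bound; since the summands for $k\ge 2$ are controlled by $\|w\|_{\cn}^2+$ (cross terms bounded by Cauchy–Schwarz using $\sum k|A_k|^2=d\le C_\Omega$), this isolates $|c_1|^2-|A_1|^2$ up to $C_\Omega(\|w\|_{\cn}^2+D(u))$; similarly $\sum_{k\ge0}(|c_k|^2-|A_k|^2)=\sum_{k\ge1}|c_{-k}|^2\le CD(u)$ isolates $(|c_0|^2-|A_0|^2)+(|c_1|^2-|A_1|^2)$. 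Combining, $|c_0|^2-|A_0|^2$ and $|c_1|^2-|A_1|^2$ are each $O_\Omega(\|w\|_{\cn}^2+D(u))$; together with the fact that for $k\ge 2$, $|c_k-e^{\im\vartheta}A_k|^2\le\|w\|_{\cn}^2$, elementary manipulations (writing $|e_j|^2\le 2|c_j|^2+2|A_j|^2$ is too lossy, so instead $|e_j|^2=|c_j|^2-2\Re(\bar c_j e^{\im\vartheta}A_j)+|A_j|^2$ and bounding the cross term via the modulus identities plus $\sum_k\bar c_jc_{j+k}=0$) give $|e_0|^2+|e_1|^2\le C_\Omega(\|w\|_{\cn}^2+D(u)+D(u)^2)$, where the $D(u)^2$ appears from squaring the linear-in-$D(u)$ control of $|c_0|,|c_1|-|A_1|$ when passing from modulus discrepancies to full complex discrepancies. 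Summing the three mode estimates and invoking $\|w\|_{L^2}^2-\|w\|_{\cn}^2\le|e_0|^2+|e_1|^2+|e_{-1}|^2$ yields the claim.

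The main obstacle I anticipate is the cross-term bookkeeping in the last step: controlling the \emph{complex} discrepancy $|c_j-e^{\im\vartheta}A_j|$ (not just $||c_j|-|A_j||$) for $j=0,1$ from the energy-type identities, since the phases $\arg c_j$ and $\vartheta+\arg A_j$ are not directly pinned by $\|w\|_{\cn}$. The resolution is that the high-frequency agreement $c_k\approx e^{\im\vartheta}A_k$ for some specific $k\le N_\Omega$ with $|A_k(\vec b)|\ge\epsilon_\Omega$ (Lemma \ref{lem:A-lower-bd}) fixes the relative phase $\vartheta$ up to an error $\lesssim\|w\|_{\cn}/\epsilon_\Omega$, and then the orthogonality relations \eqref{u=1-series} for $u$ (which tie $c_0,c_1$ to the higher modes via $\sum_j\bar c_jc_{j+k}=0$) transfer this phase information down to the low modes; carrying out this reduction carefully, with all constants depending only on $\Omega$ through $\epsilon_\Omega$ and $N_\Omega$, is the technical heart of the proof.
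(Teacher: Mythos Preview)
Your endgame is the paper's proof, but you reach it by a circuitous route and leave the decisive computation unwritten. Two simplifications collapse most of your sketch.

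First, the reduction: since $|e_k|^2\le \|w\|_{\cn}^2/|k|$ for $|k|\ge 1$, the inequality $\|w\|_{L^2(\S)}^2\le 2\pi\bigl(|e_0|^2+\|w\|_{\cn}^2\bigr)$ holds directly; you never need to isolate $|e_1|$ or $|e_{-1}|$. The entire problem is to bound $|e_0|^2=|c_0-A_0(\vec b)|^2$.

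Second, the modulus detour (bounding $|c_j|^2-|A_j|^2$ and then upgrading to $|c_j-A_j|$) does not close on its own, as you yourself flag: modulus control carries no phase information, and ``fixing the relative phase $\vartheta$'' is a category error since $\vartheta$ is given. The paper skips this step entirely and attacks $|c_0-A_0|$ directly. With $\vartheta=0$ and assuming (after Lemma~\ref{lem:A-lower-bd}) $|A_1|\ge\epsilon_\Omega$, subtract the two relations $\sum_j\bar c_jc_{j+1}=0$ and $\sum_{j\ge 0}\bar A_jA_{j+1}=0$ and telescope:
\[
A_1(\bar c_0-\bar A_0)=-\bar c_0(c_1-A_1)-\sum_{j\ge 1}\bigl[(\bar c_j-\bar A_j)c_{j+1}+\bar A_j(c_{j+1}-A_{j+1})\bigr]-\sum_{j\le -1}\bar c_jc_{j+1}.
\]
Cauchy--Schwarz on the middle sum gives $\lesssim\|w\|_{\cn}$, the tail sum is $\lesssim D(u)+\sqrt{D(u)}$, and dividing by $|A_1|\ge\epsilon_\Omega$ yields $|c_0-A_0|\le C_\Omega(\|w\|_{\cn}+D(u)+\sqrt{D(u)})$; squaring gives the $D(u)^2$ term. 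This is precisely the mechanism you allude to in your final paragraph, so your strategy is right---but the actual identity above is the content, and your write-up should lead with it rather than with the moduli.
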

\begin{proof}

Without loss of generality, we will assume $\vartheta=0$ in the assumption, and write $\phi_{\vartheta,\vec{b}}=\phi_{\vec{b}}$ for short. Suppose that $u\in \mathcal{A}_{\S}^d$ with the Fourier expansion $u=\sum_{k\in \Z}c_kz^k$.
By \eqref{2norm-series}, \eqref{deg-series} and \eqref{def:D(u)},
\begin{align}\label{D(u)-series}
    D(u)=\sum_{k\in \Z}|k||c_k|^2-\sum_{k\in \Z}k|c_k|^2=2\sum_{k\leq -1}|k||c_{k}|^2.
\end{align}
Now since $u-\phi_{\vec{b}}=\sum_{k\geq 0}(c_k-A_k)z^k+\sum_{k\leq -1}c_kz^k$, then
\begin{align}\label{kck-Ak}
    \|u-\phi_{\vec{b}}\|_{\cn}^2=\sum_{k\geq 1}k|c_k-A_k(\vec{b})|^2+\sum_{k\leq -1}|k||c_k|^2
\end{align}
and
\begin{align}
    \|u-\phi_{\vec{b}}\|_{L^2(\S)}^2=2\pi\sum_{k\geq 0}|c_k-A_k(\vec{b})|^2+2\pi\sum_{k\leq -1}|c_k|^2.
\end{align}
Combining \eqref{D(u)-series}, \eqref{kck-Ak}, we get
\begin{align}
    \|u-\phi_{\vec{b}}\|_{L^2(\S)}^2\leq 2\pi\left(|c_0-A_0(\vec{b})|^2 +\|u-\phi_{\vec{b}}\|_{\cn}^2+D(u)\right).
\end{align}
It suffices to establish the following claim.
\begin{claim}\label{claim:A0-c0}
There exists a constant $C_\Omega$ independent of $u$ and $\vec{b}$ such that
\begin{align}\label{diff-A0-c0}
|A_0(\vec{b})-c_0|\leq C_\Omega\left (\|u-\phi_{\vec{b}}\|_{\cn}+D(u)+\sqrt{D(u)}\right).
\end{align}
\end{claim}

In fact, Lemma \ref{lem:A-lower-bd} implies that there exists $1\leq k\leq N_\Omega$ such that $|A_k(\vec{b})|\geq \epsilon_\Omega$.
Without loss of generality, we assume $k=1$ and $|A_1(\vec{b})|\geq \epsilon_\Omega$. The following proof still works for other $k$ with minor modification.

Since $|u|=1$ and $|\phi_{\vec{b}}|=1$ a.e. on $\S$, \eqref{u=1-series} implies
\begin{align}
    \bar c_0c_1=-\sum_{j\in \Z\setminus\{0\}}\bar c_j c_{j+1},\quad \bar A_0 A_1=-\sum_{j\geq 1}\bar A_j A_{j+1}.
\end{align}
Here for the time being we write $A_k(\vec{b})=A_k$ for short.
Subtract two equations and make interpolation
\begin{align*}
    A_1(\bar c_0-\bar A_0)=&-\bar c_0(c_1-A_1)-\sum_{j\geq 1}[\bar c_{j}c_{j+1}-\bar A_{j}A_{j+1}]-\sum_{j\leq -1}\bar c_j c_{j+1}\\
    =&-\bar c_0(c_1-A_1)-\sum_{j\geq 1}[(\bar c_{j}-\bar A_{j})c_{j+1}+\bar  A_{j}(c_{j+1}-A_{j+1})]-\sum_{j\leq -1}\bar c_j c_{j+1}.
\end{align*}
Applying H\"older's inequality to get
\begin{align}
\begin{split}
    |A_1||c_0-A_0|\leq&\ \left(\sum_{j\geq 1}|c_j-A_{j}|^2\right)^{\frac12}\left(\sum_{j\geq 0}|c_{j}|^2+|A_{j}|^2\right)^{\frac12}+|D(u)|+|c_{-1}c_0|\\
    \leq&\  2\|u-\phi_{\vec{b}}\|_{\cn}+|D(u)|+\sqrt{D(u)}.
    \end{split}
\end{align}
By our assumption $|A_1|\geq \epsilon_{\Omega}$, the inequality \eqref{diff-A0-c0} holds.

\end{proof}
\begin{lemma}\label{lem:loc-zeros}
Suppose $\Omega\Subset \D$ is compact. For any $\varepsilon>0$, there exists $\delta_{\Omega,\varepsilon}>0$ such that if $u\in \mathcal{A}_{\S}^d$ and
\[\|u-\phi_{\vartheta,\vec{b}}\|_{\cn}^2\leq \delta_{\Omega,\epsilon}\]
for some $\vartheta\in \S$ and $\vec{b}\in \Omega^d$,  then the harmonic extension of $u$ has at least one zero at $\Omega_\varepsilon=\cup_{p\in \Omega}B_{\varepsilon}(p)$. Here $B_{\varepsilon}(p)$ denotes an open ball in $\D$.
\end{lemma}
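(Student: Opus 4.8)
\emph{Proof plan.} The strategy is to upgrade the $\cn$–closeness of $u$ to $\phi_{\vartheta,\vec b}$ first to $L^{2}(\S)$–closeness, then to $L^{\infty}$–closeness of the harmonic extensions on compact subsets of $\D$, and finally to run a Rouch\'e/Brouwer–degree argument locating a zero of the extension of $u$ near the zeros $b_{1},\dots,b_{d}\in\Omega$ of $\phi_{\vartheta,\vec b}$. First a harmless reduction: since $\Omega\Subset\D$ there is $\varepsilon_{0}=\varepsilon_{0}(\Omega)>0$ with $\overline{\Omega_{\varepsilon_{0}}}\Subset\D$, and it suffices to treat $\varepsilon\le\varepsilon_{0}$ (for larger $\varepsilon$ one has $\Omega_{\varepsilon_{0}}\subseteq\Omega_{\varepsilon}$, so $\delta_{\Omega,\varepsilon}:=\delta_{\Omega,\varepsilon_{0}}$ works). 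We may also assume $d\ge1$, replacing $u$ by $\bar u$ otherwise. Write $\Phi:=\phi_{\vartheta,\vec b}$ for its holomorphic extension to $\D$ and let $U$ be the harmonic extension of $u$, so $U-\Phi$ is the harmonic extension of $u-\phi_{\vartheta,\vec b}$.

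Now the two analytic steps. From $\|u-\phi_{\vartheta,\vec b}\|_{\cn}^{2}\le\delta$ and $\|\phi_{\vartheta,\vec b}\|_{\cn}^{2}=d$ one gets $D(u)=\|u\|_{\cn}^{2}-d\le 2\sqrt{d\delta}+\delta$, which tends to $0$ with $\delta$; inserting this into Proposition \ref{prop:H1/2-L2} gives $\|u-\phi_{\vartheta,\vec b}\|_{L^{2}(\S)}^{2}\le\omega(\delta)$ with $\omega(\delta)\to0$ and $\omega$ depending only on $\Omega$ and $d$. Next, for any compact $K\Subset\D$, representing $U-\Phi$ through the Poisson kernel and using $\sup_{z\in K}\|P(z,\cdot)\|_{L^{2}(\S)}<\infty$ gives $\|U-\Phi\|_{L^{\infty}(K)}\le C_{K}\|u-\phi_{\vartheta,\vec b}\|_{L^{2}(\S)}$. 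Applying this with $K=\partial\Omega_{\varepsilon}$ — compact in $\D$ by the reduction, and with $U$ smooth there — yields $\|U-\Phi\|_{L^{\infty}(\partial\Omega_{\varepsilon})}\to0$ as $\delta\to0$, with an explicit bound in terms of $\Omega,\varepsilon,d$.

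For the topological step, if $z\in\partial\Omega_{\varepsilon}$ then $z\notin B_{\varepsilon}(b_{j})$ for every $j$, so $|z-b_{j}|\ge\varepsilon$, while $|1-\bar b_{j}z|\le2$ on $\overline{\D}$; hence $|\Phi(z)|=\prod_{j=1}^{d}\frac{|z-b_{j}|}{|1-\bar b_{j}z|}\ge(\varepsilon/2)^{d}=:m_{\Omega,\varepsilon,d}>0$, a lower bound \emph{uniform} in $\vartheta\in\S$ and $\vec b\in\Omega^{d}$. Choose $\delta_{\Omega,\varepsilon}$ so small that $\|U-\Phi\|_{L^{\infty}(\partial\Omega_{\varepsilon})}<m_{\Omega,\varepsilon,d}$. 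Then $(1-t)\Phi+tU$ is zero-free on $\partial\Omega_{\varepsilon}$ for all $t\in[0,1]$, so by homotopy invariance of the Brouwer degree — well defined for the bounded open set $\Omega_{\varepsilon}$ even though it need not be connected or simply connected — $\deg(U,\Omega_{\varepsilon},0)=\deg(\Phi,\Omega_{\varepsilon},0)$. Since $\Phi$ is holomorphic on a neighbourhood of $\overline{\Omega_{\varepsilon}}$, is nonzero on $\partial\Omega_{\varepsilon}$, and has precisely its $d$ zeros $b_{1},\dots,b_{d}\in\Omega\subset\Omega_{\varepsilon}$ there (with multiplicity), the argument principle gives $\deg(\Phi,\Omega_{\varepsilon},0)=d\ge1$. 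Hence $\deg(U,\Omega_{\varepsilon},0)\ne0$, and the solvability property of the degree produces $z_{0}\in\Omega_{\varepsilon}$ with $U(z_{0})=0$.

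The genuinely substantial point is the passage from $\cn$–smallness to pointwise smallness of the harmonic extensions: the $\cn$ seminorm alone controls neither the $L^{2}(\S)$ distance nor the harmonic extension in $L^{\infty}_{\mathrm{loc}}(\D)$, so one must exploit the $\S$–valued constraint, which is exactly what Proposition \ref{prop:H1/2-L2} together with the a priori decay of $D(u)$ supplies. The remaining steps are soft; the two care points are that the uniform lower bound for $|\Phi|$ forces us to work with the full thickening $\Omega_{\varepsilon}$ rather than with separate $\varepsilon$–balls around the $b_{j}$ (which may collide), and that using the Brouwer degree in place of the argument principle bypasses both the non-smooth, possibly multiply connected shape of $\Omega_{\varepsilon}$ and the fact that $U$ is only harmonic, not holomorphic.
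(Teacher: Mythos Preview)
Your proof is correct and follows the same strategy as the paper: upgrade $\dot H^{1/2}$--closeness to uniform closeness of the harmonic extensions on $\overline{\Omega_\varepsilon}$, then invoke homotopy invariance of the Brouwer/Leray--Schauder degree together with $\deg(\Phi,\Omega_\varepsilon,0)=d$. You are in fact more careful than the paper at one step: the paper's argument passes directly from the hypothesis $\|u-\phi_{\vartheta,\vec b}\|_{\cn}^2\le\delta$ to smallness of $\|U-\Phi\|_{L^2(\D)}$ without handling the zeroth Fourier mode $|c_0-A_0|$, whereas you supply this explicitly by bounding $D(u)\le 2\sqrt{d\delta}+\delta$ and invoking Proposition~\ref{prop:H1/2-L2}.
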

\begin{proof}
Suppose $u=\sum_{k\in\mathbb{Z}}c_kz^k$. The harmonic extension $U$ of $u$ are simply $U(z)=\sum_{k\geq 0}c_kz^k+\sum_{k<0}c_k\bar z^k$ where $z\in \D$. Therefore
\begin{align}
    \|U-\phi_{\vartheta,\vec{b}}\|_{L^2(\D)}^2=\pi^2\sum_{k\geq 0}|c_k-A_k(\vec{b})|^2+\pi^2\sum_{k\leq -1}|c_k|^2=\frac{\pi}{2}\|u-\phi_{\vartheta,\vec{b}}\|_{L^2(\S)}^2.
\end{align}
Suppose $\varepsilon>0$ is small enough such that $\Omega_{\varepsilon}\Subset \D$. Otherwise, Leray-Schauder degree theory tells us the conclusion holds obviously.

Since $U-\phi_{\vartheta,\vec{b}}$ is a harmonic function in $\D$, then by the interior estimates
\begin{align}
    \|U-\phi_{\vartheta,\vec{b}}\|_{C^0(\overline{\Omega_{\varepsilon}})}\leq C_{\Omega,\varepsilon} \|U-\phi_{\vartheta,\vec{b}}\|_{L^2(\D)}.
\end{align}
By the property of Leray-Schauder degree, there exists $\eta=\eta(\Omega,\varepsilon)$ such that if $|U-\phi_{\vartheta,\vec{b}}|_{C^0(\partial \Omega_{\varepsilon})}<\eta$, then
\[\deg(U,\Omega_\varepsilon,0)=\deg (\phi_{\vartheta,\vec{b}},\Omega_\varepsilon,0).\]
Thus, we choose $\delta_{\Omega,\varepsilon}=2\eta^2 /(\pi  C_{\Omega,\varepsilon}^2)$. The above equation will be true for any $u$ satisfying $\|u-\phi_{\vartheta,\vec{b}}\|_{\cn}^2<\delta_{\Omega,\varepsilon}$.
However, the zeros of $\phi_{\vartheta,\vec{b}}$ are $b_1,b_2,\cdots,b_d$, which all contained in $\Omega$. Therefore $\deg(\phi_{\vartheta,\vec{b}},\Omega_{\varepsilon},0)=d$. The above identity means $u$ has at least one zero in $\Omega_{\varepsilon}$.
\end{proof}
Finally, we give the poof of the local version of quantitative stability.
\begin{proof}[{\bf Proof of Theorem \ref{intro:thm:local}}]
Here we work on the norm $\cn$, but one can easily translate results to norm $\cnr$ by \eqref{isometry}.
Without loss of generality, we will assume $\vartheta=0$ in the assumption, and write $\phi_{\vartheta,\vec{b}}=\phi_{\vec{b}}$ for short. Choose any $\varepsilon<\text{dist}(\Omega,\partial \D)/2$ and fix it for the rest of proof. Obviously $\Omega_{\varepsilon}\Subset \D$.
Suppose $u\in\mathcal{A}_{\S}^d$ and
\begin{align}\label{u-phi<delta}
    \|u-\phi_{\vec{b}}\|_{\cn}^2\leq \delta_{d,\Omega,\epsilon}
\end{align}
holds for some small $\delta_{d,\Omega,\epsilon}$, which is to be chosen later.
If $D(u)\geq \delta_{d,\Omega,\epsilon}$, then we can take $\vec{a}=\vec{b}$ and $\vartheta'=0$. Then
 \begin{align}\label{aDu}
    \|u-\phi_{\vartheta',\vec{a}}\|_{\cn}^2\leq C_{d,\Omega,\epsilon} D(u)
\end{align}
holds for $C_{d,\Omega,\epsilon}=1$. Therefore, we will assume $D(u)<\delta_{d,\Omega,\epsilon}$ for the rest of proof.


\begin{claim}\label{claim:reduce=0}
It suffices to prove the theorem for $u\in \mathcal{A}_{\S}^d$ with the additional assumption $\fint_{\S} u=0$.
\end{claim}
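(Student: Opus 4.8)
The plan is to reduce a general $u$ to one with vanishing mean by pre-composing $u$ with the conformal automorphism of $\D$ that carries the origin to a zero of the harmonic extension $U$. Pre-composition with such an automorphism leaves invariant every quantity entering the conclusion \eqref{aDu} of Theorem \ref{intro:thm:local} — the $\cn$-seminorm, the degree, hence $D(u)$, and the $\cn$-distance to a Blaschke product — while it transports the base Blaschke product $\phi_{\vartheta,\vec{b}}$ to another one whose zeros remain in a fixed compact subset of $\D$ depending only on $\Omega$ and $\varepsilon$.

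Concretely, I would first shrink $\delta_{d,\Omega,\varepsilon}$ if necessary so that $\delta_{d,\Omega,\varepsilon}\le\delta_{\Omega,\varepsilon}$, so that Lemma \ref{lem:loc-zeros} (applied with the $\varepsilon$ fixed above) guarantees, under hypothesis \eqref{u-phi<delta}, that $U$ vanishes at some $a_0\in\Omega_\varepsilon\Subset\D$. Set $\tilde u:=u\circ\phi_{0,-a_0}$; since $\phi_{0,-a_0}$ is a holomorphic automorphism of $\D$ with $\phi_{0,-a_0}(0)=a_0$, the harmonic extension of $\tilde u$ is $U\circ\phi_{0,-a_0}$, whence $\fint_{\S}\tilde u=(U\circ\phi_{0,-a_0})(0)=U(a_0)=0$. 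Using that the $\cn$-seminorm of any function equals $\tfrac{1}{2\pi}$ times the Dirichlet integral $\int_{\D}|\nabla(\cdot)|^2$ of its harmonic extension (cf. \eqref{norm-ext-disk}), that this Dirichlet integral is conformally invariant in two dimensions, and that the winding number \eqref{deg-disk} is likewise invariant under pre-composition with automorphisms of $\D$, one gets $\|\tilde u\|_{\cn}=\|u\|_{\cn}$, $\deg\tilde u=d$, hence $D(\tilde u)=D(u)$; the same invariance applied to the $\mathbb{C}$-valued function $u-\phi_{\vec{b}}$ gives $\|\tilde u-\phi_{\vec{b}}\circ\phi_{0,-a_0}\|_{\cn}=\|u-\phi_{\vec{b}}\|_{\cn}$. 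Finally, by the group law \eqref{phiaphib}, $\phi_{\vec{b}}\circ\phi_{0,-a_0}$ is again a Blaschke product $\phi_{\vartheta',\vec{b}\,'}$ of degree $d$, with $\vec{b}\,'=(\phi_{0,a_0}(b_1),\dots,\phi_{0,a_0}(b_d))\in(\Omega')^d$, where $\Omega':=\{\phi_{0,a}(b):a\in\overline{\Omega_\varepsilon},\ b\in\Omega\}$ is a compact subset of $\D$ determined only by $\Omega$ and $\varepsilon$.

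Granting the theorem for maps of vanishing mean, I would then apply it to $\tilde u$ with the compact set $\Omega'$ and any fixed $\varepsilon'$ satisfying $\Omega'_{\varepsilon'}\Subset\D$ (imposing also $\delta_{d,\Omega,\varepsilon}\le\delta_{d,\Omega',\varepsilon'}$), obtaining $\tilde\vartheta\in\S$, $\vec{\tilde a}\in(\Omega'_{\varepsilon'})^d$ and $C_{d,\Omega',\varepsilon'}$ with $\|\tilde u-\phi_{\tilde\vartheta,\vec{\tilde a}}\|_{\cn}^2\le C_{d,\Omega',\varepsilon'}D(\tilde u)=C_{d,\Omega',\varepsilon'}D(u)$. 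Undoing the reduction via $u=\tilde u\circ\phi_{0,a_0}$ (as $\phi_{0,-a_0}^{-1}=\phi_{0,a_0}$), one more use of the conformal invariance of the seminorm together with $\phi_{\tilde\vartheta,\vec{\tilde a}}\circ\phi_{0,a_0}=\phi_{\vartheta'',\vec\alpha}$ (again \eqref{phiaphib}), where $\vec\alpha=(\phi_{0,-a_0}(\tilde a_1),\dots,\phi_{0,-a_0}(\tilde a_d))$ lies in a compact subset of $\D$ depending only on $\Omega,\varepsilon$, yields \eqref{aDu} for $u$ with $C_{d,\Omega,\varepsilon}:=C_{d,\Omega',\varepsilon'}$; renaming this compact set as the $\Omega_\varepsilon$ of the statement is harmless. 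The invariances used here are routine two-dimensional facts (conformal invariance of the Dirichlet energy and of the degree); I expect the only genuinely delicate point to be the bookkeeping of compact subsets of $\D$ — the reduction passes from $\Omega$ to $\Omega'$, and the output region for $\vec\alpha$ must again be a fixed compact set — which is exactly why one needs the vanishing-mean statement to be available for every compact subset of $\D$ in order to keep the final constant independent of $u$.
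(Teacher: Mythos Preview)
Your approach is essentially the same as the paper's: compose $u$ with a disk automorphism sending $0$ to a zero $a_0$ of the harmonic extension $U$ (located via Lemma \ref{lem:loc-zeros}), use conformal invariance of the $\cn$-seminorm and of the degree, and track how the Blaschke data $\vec{b}$ transforms. Your use of the mean value property to conclude $\fint_{\S}\tilde u=U(a_0)=0$ is in fact a bit cleaner than the paper's Fourier computation.

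There is, however, one point where your bookkeeping is not quite right. The conclusion of Theorem \ref{intro:thm:local} demands $\vec{\alpha}\in\Omega_\varepsilon^d$, not merely that $\vec{\alpha}$ lie in \emph{some} compact subset of $\D$ depending on $\Omega,\varepsilon$. Your sentence ``renaming this compact set as the $\Omega_\varepsilon$ of the statement is harmless'' is not justified: the set $\{\phi_{0,-a_0}(c):a_0\in\overline{\Omega_\varepsilon},\ c\in\Omega'_{\varepsilon'}\}$ need not be contained in $\Omega_\varepsilon$ for an arbitrary $\varepsilon'$ with $\Omega'_{\varepsilon'}\Subset\D$. The paper handles exactly this issue by choosing the inner parameter $\tilde\varepsilon$ small enough that $\phi_a\bigl((\phi_{-a}(\Omega))_{\tilde\varepsilon}\bigr)\subset\Omega_\varepsilon$ for every $a\in\Omega_\varepsilon$ (equation \eqref{phiaOmega}); you should impose the analogous smallness condition on your $\varepsilon'$ rather than taking it arbitrary. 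With that correction your argument goes through and matches the paper's.
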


Indeed, it follows from Proposition \ref{prop:H1/2-L2} and Lemma \ref{lem:loc-zeros} that there exists $\tilde\delta_{d,\Omega,\varepsilon}$ such that if $\delta_{d,\Omega,\epsilon}<\tilde\delta_{d,\Omega,\varepsilon}$ then the harmonic extension $U$ of $u$ has a zero, say $a$, within $\Omega_{\varepsilon}$. Now consider  $u\circ\phi_{a}$. It satisfies $\int_{\S}u\circ \phi_a=0$, because $u\circ \phi_a=\sum{c_k}\phi_a^k$, and
\begin{align}
    \int_{\S}u\circ\phi_a=\sum_{k\geq0} c_k(-a)^k+\sum_{k\leq-1}c_k(-\bar{a})^k=U(-a)=0.
\end{align}
Since composing M\"obius transformation does not alter the $\cn$ norm,
\[\|u\circ\phi_a-\phi_{\vec{b}}\circ\phi_a\|_{\cn}^2\leq \delta_{d,\Omega,\epsilon}.\]
 Moreover $\phi_{\vec{b}}\circ\phi_{a}=\phi_{\vec{e}}$ with
\[e_i=\frac{b_i+a}{1+\bar ab_i}=\phi_{-a}(b_i),\quad i=1,\cdots,d.\]
Note that $a$ belongs to a compact set $\Omega_{\varepsilon}$, which just depend on $\Omega$. Then $\Omega\circ\phi_a:=\phi_{-a}(\Omega)$ is uniformly away from $\partial \D$, because for any $z\in\Omega$
\[1-|\phi_{-a}(z)|^2=1-\left|\frac{z+a}{1+\bar a z}\right|^2=\frac{(1-|a|^2)(1-|z|^2)}{|1+\bar a z|^2}\geq \frac{1}{6}\text{dist}(\Omega,\partial \D)^2.\]
Suppose for $\Omega\circ\phi_a$ and $u\circ\phi_a$, we have find $\delta_{d,\Omega\circ\phi_a,\tilde \varepsilon}$ such that there exists $\vec{a}'\in \Omega\circ\phi_a+B_{\tilde\varepsilon}(0)$  and $\vartheta'\in \S$
\[\|u\circ \phi_a-\phi_{\vartheta',\vec{a}'}\|_{\cn}^2\leq C_{d,\Omega\circ \phi_a,\tilde \varepsilon}D(u\circ\phi_a).\]
Here $\tilde \varepsilon$ is chosen to satisfy
\begin{align}\label{phiaOmega}
    \phi_a(\Omega\circ\phi_a+B_{\tilde\varepsilon}(0))\subset \Omega_\varepsilon,\quad\forall a\in \Omega_{\varepsilon}.
\end{align}
Take $\delta_{d,\Omega,\varepsilon}=\min\{\tilde\delta_{d,\Omega,\epsilon},\ \inf\{\delta_{d,\Omega\circ\phi_a,\tilde \varepsilon}:a\in \Omega_\varepsilon\}\}$ and $C_{d,\Omega,\varepsilon}=\sup\{C_{d,\Omega\circ\phi_a,\tilde \varepsilon}:a\in \Omega_\varepsilon\}$. Again since $\Omega\circ\phi_a$ is uniformly away from $\partial \D$, one must have $\delta_{d,\Omega,\varepsilon}>0$ and $C_{d,\Omega,\varepsilon}<\infty$. Then for $\delta_{d,\Omega,\varepsilon}$, we can find $\vec{a}$ such that \eqref{aDu} holds with $C_{d,\Omega,\varepsilon}$. Moreover, \eqref{phiaOmega} means that $\vec{a}$ can be chosen in $\Omega_\varepsilon$. Therefore Claim \ref{claim:reduce=0} is proved.

With the above claim in hand, we will prove the statement by induction.

Suppose $d=1$, we write $\vec{b}=b_1$ in \eqref{u-phi<delta}. Theorem \ref{intro:thm:sta-type1} guarantee the existence of $\vec{a}=a_1$ such that \eqref{aDu} holds with $C_{1,\Omega,\epsilon}=36$. It suffices to show $|a_1-b_1|<\varepsilon$ if $\delta_{1,\Omega,\varepsilon}$ is chosen small enough. If fact, if \eqref{u-phi<delta} and \eqref{aDu} holds, then
\begin{align}
    \|\phi_{\vartheta,b_1}-\phi_{\vartheta',a_1}\|_{\cn}^2\leq 2\|u-\phi_{\vartheta,b_1}\|_{\cn}^2+2\|u-\phi_{\vartheta',a_1}\|_{\cn}^2\leq 100\delta_{1,\Omega,\epsilon}.
\end{align}
Direct computation shows that
\begin{align*}
    \|\phi_{\vartheta,b_1}-&\phi_{\vartheta',a_1}\|_{\cn}^2= \|\phi_{\vartheta-\vartheta',\frac{b_1-a_1}{1-\bar a_1b_1}}-\id\|_{\cn}^2\\
    =& \left|e^{\im (\vartheta-\vartheta')}\left(1-\left|\frac{b_1-a_1}{1-\bar a_1 b_1}\right|\right)-1\right|^2+\left|\frac{b_1-a_1}{1-\bar a_1b_1}\right|^2\left(2-\left|\frac{b_1-a_1}{1-\bar a_1b_1}\right|^2\right).
\end{align*}
Now it is easy to deduce that
\[|b_1-a_1|\leq 20\sqrt{\delta_{1,\Omega,\epsilon}}.\]
Choose $\delta_{1,\Omega,\varepsilon}$ small so that $|a_1-b_1|<\varepsilon$. This proves the case $d=1$. Now by induction, we can assume the theorem holds for the case $d-1$ with $\delta_{d-1,\Omega,\varepsilon}$ and $C_{d-1,\Omega,\varepsilon}$.

\begin{claim}
There exists $\hat \delta_{d-1,\Omega,\varepsilon}$ such that if $\delta_{d,\Omega,\epsilon}<\hat \delta_{d-1,\Omega,\varepsilon}$, then one can find $\vec{b}'\in \Omega^{d-1}$ such that
\begin{align}\label{aomega'}
    \|u\bar z-\phi_{\vec{b}'}\|^2_{\cn}<\delta_{d-1,\Omega,\varepsilon}.
\end{align}
\end{claim}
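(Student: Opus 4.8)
The plan is to exploit the normalisation $\fint_{\S}u=0$ currently in force (equivalently, the harmonic extension $U$ of $u$ satisfies $U(0)=0$) together with the a priori bounds $\|u-\phi_{\vec{b}}\|_{\cn}^2\le\delta_{d,\Omega,\varepsilon}$ and $D(u)<\delta_{d,\Omega,\varepsilon}$. These force $\phi_{\vec{b}}(0)=A_0(\vec{b})$ to be small, hence one of the Blaschke parameters $b_j$ to lie near $0$; after relabelling so that $|b_d|$ is smallest, $\phi_{\vec{b}}=\phi_{\vec{b}'}\,\phi_{b_d}$ with $\vec{b}'=(b_1,\dots,b_{d-1})\in\Omega^{d-1}$, and dividing by $z$ removes the factor $\phi_{b_d}$ up to a small error, so that $u\bar z$ is $\delta_{d-1,\Omega,\varepsilon}$-close to $\phi_{\vec{b}'}$ once $\delta_{d,\Omega,\varepsilon}$ is small enough. (We take $d\ge2$ here; the case $d=1$ is handled directly earlier.) Note that $u\bar z\in\mathcal{A}_{\S}^{d-1}$, since $|u\bar z|=1$ a.e.\ and $\deg(u\bar z)=\deg u-1=d-1$ by Lemma \ref{lem:deg_prod}.

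First I would make the smallness of one Blaschke parameter quantitative. Since $\phi_{\vec{b}}$ is holomorphic on $\overline{\D}$, its Poisson extension is $\phi_{\vec{b}}$ itself, so $\phi_{\vec{b}}(0)=A_0(\vec{b})=(-1)^d\prod_{j=1}^d b_j$. Applying interior estimates to the harmonic function $U-\phi_{\vec{b}}$ on $\D$ (as in the proof of Lemma \ref{lem:loc-zeros}, using $\|U-\phi_{\vec{b}}\|_{L^2(\D)}^2=\tfrac{\pi}{2}\|u-\phi_{\vec{b}}\|_{L^2(\S)}^2$) and then Proposition \ref{prop:H1/2-L2} to pass from $L^2(\S)$ to $\cn$, one gets
\[ |A_0(\vec{b})|=|(U-\phi_{\vec{b}})(0)|\le C\,\|u-\phi_{\vec{b}}\|_{L^2(\S)}\le C_\Omega\big(\|u-\phi_{\vec{b}}\|_{\cn}^2+D(u)+D(u)^2\big)^{1/2}\le C_\Omega\,\delta_{d,\Omega,\varepsilon}^{1/2}. \]
Since $\prod_j|b_j|=|A_0(\vec{b})|$, at least one factor satisfies $|b_j|\le(C_\Omega\,\delta_{d,\Omega,\varepsilon}^{1/2})^{1/d}=:\omega$, and $\omega\to0$ as $\delta_{d,\Omega,\varepsilon}\to0$. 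Relabel so that this parameter is $b_d$.

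Next I would compare $u\bar z$ with $\phi_{\vec{b}'}$ by writing $u\bar z-\phi_{\vec{b}'}=(u-\phi_{\vec{b}})\bar z+\phi_{\vec{b}'}(\phi_{b_d}\bar z-1)$ and bounding each term. Multiplication by $\bar z$ shifts Fourier coefficients by one, and since $|k-1|$ and $|k|$ differ by at most $1$, $\|(u-\phi_{\vec{b}})\bar z\|_{\cn}^2\le\|u-\phi_{\vec{b}}\|_{\cn}^2+\tfrac{1}{2\pi}\|u-\phi_{\vec{b}}\|_{L^2(\S)}^2\le C_\Omega\,\delta_{d,\Omega,\varepsilon}$ by Proposition \ref{prop:H1/2-L2}. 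For the second term, on $\S$ one has $\phi_{b_d}\bar z-1=(\phi_{b_d}-z)\bar z$, and the expansion \eqref{phia-exp} shows every Fourier coefficient of $\phi_{b_d}-z$ is $O(|b_d|)$ with geometric decay (rate controlled since $b_d\in\Omega\Subset\D$), so $\|\phi_{b_d}-z\|_{\cn}+\|\phi_{b_d}-z\|_{L^2(\S)}\le C_\Omega|b_d|$, and hence $\|\phi_{b_d}\bar z-1\|_{\cn}\le C_\Omega|b_d|$. Combining this with the elementary product estimate $\|fg\|_{\cn}\le\|f\|_{L^\infty}\|g\|_{\cn}+\|g\|_{L^\infty}\|f\|_{\cn}$ and with $\|\phi_{\vec{b}'}\|_{L^\infty}=1$, $\|\phi_{\vec{b}'}\|_{\cn}^2=d-1$, gives $\|\phi_{\vec{b}'}(\phi_{b_d}\bar z-1)\|_{\cn}\le C_{d,\Omega}|b_d|\le C_{d,\Omega}\,\omega$. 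Altogether (using $D(u)<\delta_{d,\Omega,\varepsilon}\le1$), $\|u\bar z-\phi_{\vec{b}'}\|_{\cn}^2\le C_{d,\Omega}(\delta_{d,\Omega,\varepsilon}+\omega^2)\le C_{d,\Omega}\,\delta_{d,\Omega,\varepsilon}^{1/d}$, which is $<\delta_{d-1,\Omega,\varepsilon}$ provided $\delta_{d,\Omega,\varepsilon}<\hat\delta_{d-1,\Omega,\varepsilon}:=(\delta_{d-1,\Omega,\varepsilon}/C_{d,\Omega})^{d}$; since $\vec{b}'\in\Omega^{d-1}$, this proves the claim.

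I expect the crux to be the first step: recognising that the normalisation $\fint_{\S}u=0$ — which in the main argument arises from an interior zero of $U$ — combined with proximity to $\phi_{\vec{b}}$ pins one Blaschke parameter near the origin, so that division by $z$ approximately deletes a factor. Everything after that is routine bookkeeping: the Fourier-shift estimate for multiplication by $\bar z$, the $\dot H^{1/2}$ Leibniz estimate, and Proposition \ref{prop:H1/2-L2} to move between $\cn$ and $L^2(\S)$ norms of $u-\phi_{\vec{b}}$. The one point worth flagging is that $\vec{b}'$ automatically lies in $\Omega^{d-1}$, which is exactly what the inductive hypothesis at level $d-1$ requires.
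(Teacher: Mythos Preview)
Your proposal is correct and follows essentially the same route as the paper: use the normalisation $c_0=\fint_{\S}u=0$ together with the smallness of $\|u-\phi_{\vec{b}}\|_{\cn}$ and $D(u)$ to force $|A_0(\vec{b})|=\prod_j|b_j|$ small, hence one $b_j$ close to $0$; then split $u\bar z-\phi_{\vec{b}'}$ into $(u-\phi_{\vec{b}})\bar z$ and a term measuring $\bar z\phi_{\vec{b}}-\phi_{\vec{b}'}$, and show both are small. The only differences are cosmetic: the paper bounds $|A_0(\vec{b})|=|A_0(\vec{b})-c_0|$ directly via Claim~\ref{claim:A0-c0} rather than through interior estimates and Proposition~\ref{prop:H1/2-L2}, and for the Fourier-shift term it computes $\|u\bar z-\bar z\phi_{\vec{b}}\|_{\cn}^2-\|u-\phi_{\vec{b}}\|_{\cn}^2<D(u)$ explicitly (using $c_0=0$) rather than invoking the $L^2$ bound; for the second term the paper simply notes $\|\bar z\phi_{\vec{b}}-\phi_{\vec{b}'}\|_{\cn}\to0$ without the Leibniz estimate you supply.
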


To prove the claim, we notice that $\phi_{\vec{b}}$ is invariant under the permutation of $b_i$. Without loss of generality, we assume $|b_1|\leq \cdots\leq |b_d|$\footnote{such ordering might not be unique, but it does not affect the proof}. Recalling $|A_0(\vec{b})|=\prod_{1\leq k\leq d}|b_k|$ and \eqref{diff-A0-c0}, we get $|b_1|\leq 10 \delta_{d,\Omega,\epsilon}^{1/4d}$. Let $\vec{b}'=(b_2,\cdots,b_d)\in \Omega^{d-1}$. By the Cauchy inequality
\begin{align}
    \|u\bar z-\phi_{\vec{b}'}\|_{\cn}^2\leq 2\|u\bar z-\bar z\phi_{\vec{b}}\|_{\cn}^2+2\|\bar z\phi_{\vec{b}}-\phi_{\vec{b}'}\|_{\cn}^2.
\end{align}
One can compute that
\begin{align*}
     \|u\bar z-\bar z\phi_{\vec{b}}\|_{\cn}^2-\|u-\phi_{\vec{b}}\|^2_{\cn}&=\sum|k||c_{k+1}-A_{k+1}(\vec{b})|^2-\sum|k||c_k-A_k(\vec{b})|^2\\
     &=-\sum_{k\geq 1}|c_k-A_k(\vec{b})|^2+\sum_{k\leq 0}|c_k|^2<D(u).
\end{align*}
Notice that
\begin{align}
    \|\bar z\phi_{\vec{b}}-\phi_{\vec{b}'}\|_{\cn}^2=\|\frac{1-b_1\bar z}{1-\bar b_1 z}\phi_{b_2}\cdots\phi_{b_d}-\phi_{b_2}\cdots\phi_{b_d}\|_{\cn}^2\to 0
\end{align}
as $\delta_{d,\Omega,\epsilon}\to 0$. Therefore, choosing $\delta_{d,\Omega,\epsilon}$ small enough, we can get \eqref{aomega'}.

Therefore, by induction, we have $\vartheta'\in \S$ and $\vec{a}'\in \Omega^{d-1}_{\varepsilon/2}$ such that
\begin{align}\label{u-21}
    \|u\bar z-\phi_{\vartheta',\vec{a}'}\|_{\cn}^2\leq C_{d-1,\Omega_{\varepsilon/2},\varepsilon}D(u\bar z).
\end{align}
\begin{claim}\label{claim:u-z-d-1}
We claim that there exists $C_{d,\Omega,\varepsilon}$ such that
\begin{align}\label{u-z-C1}
    \|u-z\phi_{\vartheta',\vec{a}'}\|_{\cn}^2\leq C_{d,\Omega,\varepsilon}D(u).
\end{align}
\end{claim}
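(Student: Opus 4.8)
The plan is to compare the $\cn$-norm of $u - z\phi_{\vartheta',\vec a'}$ with that of $u\bar z - \phi_{\vartheta',\vec a'}$ already controlled by \eqref{u-21}, and then to convert the bound $D(u\bar z)$ on the right-hand side into a bound by $D(u)$. Write $u = \sum_{k\in\Z} c_k z^k$ and $\phi_{\vartheta',\vec a'} = \sum_{k\ge 0} B_k z^k$ (the Fourier coefficients of the degree $d-1$ Blaschke product). First I would carry out the same elementary Fourier computation used just above the claim: since multiplication by $z$ shifts Fourier indices,
\begin{align*}
\|u - z\phi_{\vartheta',\vec a'}\|_{\cn}^2 - \|u\bar z - \phi_{\vartheta',\vec a'}\|_{\cn}^2 = \sum_{k\in\Z}|k|\,|c_k - B_{k-1}|^2 - \sum_{k\in\Z}|k|\,|c_{k-1}-B_k|^2,
\end{align*}
and reindexing the second sum ($k\mapsto k+1$) this telescopes to a difference of the two "boundary" contributions, which one bounds crudely by $\sum_{k\le 0}|c_k|^2 + (\text{a finite number of small coefficients}) \lesssim D(u) + \|u-\phi_{\vec b}\|_{\cn}^2$, exactly as in the computation of $\|u\bar z - \bar z\phi_{\vec b}\|_{\cn}^2 - \|u-\phi_{\vec b}\|_{\cn}^2$ displayed earlier. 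Since $\|u-\phi_{\vec b}\|_{\cn}^2 \le \delta_{d,\Omega,\varepsilon}$ is small and — crucially — can itself be bounded by $C\,D(u)$ once we are in the regime where Theorem \ref{intro:thm:sta-type1}/the $d=1$ case applies, or more directly by absorbing it, this gives $\|u - z\phi_{\vartheta',\vec a'}\|_{\cn}^2 \lesssim \|u\bar z-\phi_{\vartheta',\vec a'}\|_{\cn}^2 + D(u)$.

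Next I would handle the term $D(u\bar z)$ on the right of \eqref{u-21}. Using the series formula \eqref{D(u)-series}, $D(u\bar z) = 2\sum_{k\le -1}|k|\,|c_{k+1}|^2 = 2\sum_{k\le -2}|k-1|\,|c_{k}|^2$; comparing with $D(u) = 2\sum_{k\le -1}|k|\,|c_k|^2$, the two differ only by bounded multiplicative factors on each coefficient plus the single extra term $2|c_0|^2$ coming from the index shift hitting $k=0$. Hence $D(u\bar z) \le 2D(u) + 4|c_0|^2$. But $u$ was normalized in Claim \ref{claim:reduce=0} so that $\fint_\S u = 0$, i.e. $c_0 = 0$; therefore $D(u\bar z) \le 2D(u)$ (one should double-check the precise constant, but it is harmless). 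Combining this with the previous paragraph yields \eqref{u-z-C1} with $C_{d,\Omega,\varepsilon}$ depending only on $C_{d-1,\Omega_{\varepsilon/2},\varepsilon}$ and absolute constants.

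Finally I would record that $z\phi_{\vartheta',\vec a'} = \phi_{\vartheta'',\vec a}$ for a degree-$d$ Blaschke product: multiplying by $z$ is the same as inserting a zero at the origin, so $\vec a = (0, a_1', \dots, a_{d-1}')$, and after reabsorbing the origin into a genuine parameter slightly perturbed inside $\Omega_\varepsilon$ (using that $0$ lies well inside $\Omega_\varepsilon$ whenever $\Omega$ is a neighborhood of the relevant configuration — or, if $0\notin\Omega_\varepsilon$, one instead notes $b_1$ was already shown to satisfy $|b_1|\le 10\delta_{d,\Omega,\varepsilon}^{1/4d}$ and uses $b_1$ in place of $0$ at the cost of another $O(\delta)\le O(D(u))$ error via the same Fourier comparison) one gets the claimed $\vec a \in \Omega_\varepsilon^d$. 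The main obstacle, and the step requiring the most care, is this last bookkeeping: ensuring the new zero lands in $\Omega_\varepsilon$ and that replacing the exact zero at $0$ by a nearby admissible parameter only costs a further multiple of $D(u)$ — this is where one must invoke $|b_1|\le 10\delta_{d,\Omega,\varepsilon}^{1/4d}$ from Claim and the smallness of $\delta_{d,\Omega,\varepsilon}$ to keep everything inside the prescribed compact set while controlling the error linearly by $D(u)$.
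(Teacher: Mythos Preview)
Your reduction to $D(u\bar z)\le 2D(u)$ using $c_0=0$ is correct and matches the paper. The gap is in your first step. Writing $B_k$ for the Fourier coefficients of $\phi_{\vartheta',\vec a'}$, the index shift gives
\[
\|u-z\phi_{\vartheta',\vec a'}\|_{\cn}^2-\|u\bar z-\phi_{\vartheta',\vec a'}\|_{\cn}^2
=\sum_{j\ge 0}|c_{j+1}-B_j|^2-\sum_{k\le 0}|c_k|^2 .
\]
The terms with $j\ge 1$ are indeed dominated by $\|u\bar z-\phi_{\vartheta',\vec a'}\|_{\cn}^2$, and the negative sum is harmless. But the $j=0$ term $|c_1-B_0|^2=|c_1-e^{\im\vartheta'}A_0(\vec a')|^2$ is \emph{not} controlled by any of the quantities you invoke: it is the zero-frequency coefficient of $u\bar z-\phi_{\vartheta',\vec a'}$, invisible to the $\dot H^{1/2}$ norm; it has nothing to do with $\|u-\phi_{\vec b}\|_{\cn}^2$ (which involves a different Blaschke product and in any case does not see zero frequency either); and your suggestion that $\|u-\phi_{\vec b}\|_{\cn}^2$ ``can itself be bounded by $C\,D(u)$'' is circular --- that is precisely the conclusion \eqref{aDu} you are trying to establish, and in the running assumption we only know $D(u)<\delta$, not $\delta\lesssim D(u)$.

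The paper handles exactly this missing coefficient by exploiting the algebraic constraints \eqref{u=1-series} coming from $|u|=1$ and $|\phi_{\vartheta',\vec a'}|=1$: subtracting the two convolution identities and interpolating yields $|A_1(\vec a')|\cdot|c_1-e^{\im\vartheta'}A_0(\vec a')|\lesssim \|u\bar z-\phi_{\vartheta',\vec a'}\|_{\cn}+D(u)$, and then Lemma~\ref{lem:A-lower-bd} supplies the lower bound $|A_1(\vec a')|\ge\epsilon_{\Omega_\varepsilon}$ (or the same for some $A_k$ with $k\le N_\Omega$), giving $|c_1-e^{\im\vartheta'}A_0(\vec a')|^2\lesssim D(u)$. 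Without this step --- the analogue of Claim~\ref{claim:A0-c0} in Proposition~\ref{prop:H1/2-L2} --- your argument does not close. Your final paragraph about placing the extra zero in $\Omega_\varepsilon$ is essentially fine: since $b_1\in\Omega$ and $|b_1|\le 10\,\delta^{1/4d}$, one has $0\in\Omega_\varepsilon$ for $\delta$ small, so $\vec a=(0,\vec a')\in\Omega_\varepsilon^d$ directly.
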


Indeed, since $u\bar z=\sum c_{k+1}z^k$,
\begin{align}
\begin{split}\label{Dubarz}
    D(u\bar z)=&2\sum_{k\leq -1}|k||c_{k+1}|^2=2\sum_{k\leq -1}|k+1||c_{k+1}|^2+2\sum_{k\leq -1}|c_{k+1}|^2\\
    =&D(u)+2\sum_{k\leq 0}|c_{k}|^2\leq 2D(u).
\end{split}
\end{align}
Here in the last step, we have used $c_0=0$.  Then \eqref{u-21} and \eqref{Dubarz} imply  \begin{align}\label{ubarz-Du}
    \|u\bar z-\phi_{\vartheta',\vec{a}'}\|_{\cn}^2\leq 2C_{d-1,\Omega_{\varepsilon/2},\varepsilon}D(u).
\end{align}
Since $u-z\phi_{\vartheta',\vec{a}'}=\sum(c_k-e^{\im \vartheta'}A_{k-1}(\vec{a}'))z^k$, then
\begin{align*}
\begin{split}
    \|u-z\phi_{\vartheta',\vec{a}'}\|^2_{\cn}=&\sum |k||c_k-e^{\im \vartheta'}A_{k-1}(\vec{a}')|^2=\sum|k+1||c_{k+1}-e^{\im \vartheta'}A_k(\vec{a}')|^2\\
    =&\|u\bar z-\phi_{\vartheta',\vec{a}'} \|_{\cn}^2+\sum_{k\geq 0}|c_{k+1}-e^{\im \vartheta'}A_k(\vec{a}')|^2-\sum_{k\leq 0}|c_{k}|^2.
    \end{split}
\end{align*}
Since \eqref{2norm-series}, we have $\sum_{k\geq 1}k|c_{k+1}-e^{\im \vartheta'}A_k(\vec{a}')|^2\leq\|u\bar z-\phi_{\vartheta',\vec{a}'}\|^2_{\cn} $
\begin{align}
\begin{split}\label{u-22}
    \|u-z\phi_{\vartheta',\vec{a}'}\|^2_{\cn}\leq&2\|u\bar z-\phi_{\vartheta',\vec{a}'} \|^2_{\cn}+|c_1-e^{\im\vartheta'}A_0(\vec{a})|^2\\
    \leq& 4C_{d-1,\Omega_{\varepsilon/2},\varepsilon}D(u)+|c_1-e^{\im\vartheta'}A_0(\vec{a})|^2.
\end{split}
\end{align}
Since $|u|=1$ and $|\phi_{\vartheta',\vec{a}'}|=1$ a.e. on $\S$, \eqref{u=1-series} implies
\begin{align}
    \bar c_1c_2=-\sum_{j\in \Z\setminus\{1\}}\bar c_j c_{j+1},\quad \bar A_0 A_1=-\sum_{j\geq 1}\bar A_j A_{j+1}.
\end{align}
Here and the following we write $A_k(\vec{a}')=A_k$ for short. We subtract two equations and make interpolations.
\begin{align*}
    e^{\im \vartheta'}A_1(\bar c_1-e^{-\im \vartheta'}\bar A_0)
    =&-\bar c_1(c_2-e^{\im \vartheta'}A_1)-\sum_{j\geq 2}[\bar c_{j}c_{j+1}-\bar A_{j-1}A_{j}]-\sum_{j\leq -2}\bar c_j c_{j+1}\\
    =&-\bar c_1(c_2-e^{\im \vartheta'}A_1)-\sum_{j\leq -2}\bar c_j c_{j+1}\\
    &-\sum_{j\geq 2}[(\bar c_{j}-e^{-\im \vartheta'}\bar A_{j-1})c_{j+1}+e^{-\im\vartheta'}\bar  A_{j-1}(c_{j+1}-e^{\im \vartheta'}A_{j})].
\end{align*}
Applying H\"older's inequality and \eqref{ubarz-Du}, we have
\begin{align}\label{ineq-small}
\begin{split}
    |A_1||c_1-e^{\im \vartheta'}A_0|\leq&\ \left(\sum_{j\geq 2}|c_j-e^{\im \vartheta'}A_{j-1}|^2\right)^{\frac12}\left(\sum_{j\geq 1}|c_{j}|^2+|A_{j}|^2\right)^{\frac12}+D(u)\\
    \leq&\ 2\|u\bar z-\phi_{\vartheta',\vec{a}'}\|_{\cn}+D(u)
    \leq\ 3\sqrt{2C_{d-1,\Omega_{\varepsilon/2},\varepsilon}D(u)}.
    \end{split}
\end{align}
Here we used $D(u)<\delta <1$. Using Lemma \ref{lem:A-lower-bd}, we have
\begin{align}
    |c_1-A_0|\leq 3\epsilon_{\Omega_{\varepsilon}}^{-1}\sqrt{2C_{d-1,\Omega_{\varepsilon/2},\varepsilon}D(u)}.
\end{align}
Plugging this back to \eqref{u-22} to get
\begin{align}
    \|u-z\phi_{\vartheta',\vec{a}}\|^2_{\cn}\leq C_{d,\Omega,\varepsilon}D(u)
\end{align}
with $C_{d,\Omega,\varepsilon}=(4+18\epsilon_{\Omega_\varepsilon}^{-2})C_{d-1,\Omega_{\varepsilon/2},\varepsilon}$.
The \eqref{u-z-C1} is proved.

Having established Claim \ref{claim:u-z-d-1}, we can see \eqref{aDu} holds with the choice of  $\vec{a}=(0,\vec{a}')\in \Omega_{\varepsilon}^{d}$, $\delta_{d,\Omega,\varepsilon}=\min\{\tilde\delta_{d,\Omega,\varepsilon},\hat\delta_{d-1,\Omega,\varepsilon}\}$ and $C_{\Omega,d,\varepsilon}=\max\{(4+18\epsilon_{\Omega_\varepsilon}^{-2})C_{d-1,\Omega_{\varepsilon/2},\varepsilon},1\}$.  The induction is complete.
\end{proof}



\section{A counter example for higher degree}\label{sec:higher}

In this section we shall prove that there is no uniform stability. Recall that all the half-harmonic maps from $\R$ to $\S$ with positive degree can be written in \eqref{intro:1/2-H-form}. Within this section, we will assume $\alpha_i=x_i+\im \lambda_i$ with $x_i\in \R$ and $\l_i>0$. Then
\begin{align}
    \psi_{\vartheta,\vec{\alpha}}=e^{\im\vartheta}\frac{x-x_1+\im \l_1}{x-x_1+\im \l_1}\frac{x-x_2-\im \l_2}{x-x_2+\im \l_2}.
\end{align}


 One can equip $\dot H^{1/2}(\R,\R^2)$ with the inner product,
\begin{align}
    \langle \bphi_1,\bphi_2\rangle=\frac{1}{2\pi}\iint_{\R\times \R}\frac{(\bphi_1(x)-\bphi_1(y))\cdot(\bphi_2(x)-\bphi_2(y))}{|x-y|^2}dxdy.
\end{align}
If $\bphi_2$ is smooth enough, then RHS of the above equation can be written  $\int_{\R}\bphi_1\cdot \DRh\bphi_2$. We shall abuse the notation by denoting $\langle \bphi_1,\bphi_2\rangle =\int_{\R}\bphi_1\cdot \DRh\bphi_2$ for all $\bphi_1,\bphi_2\in \dot H^{1/2}(\R,\R^2)$. It should be interpreted as the RHS of the above equation. Let us use $\bpsi^\perp$ denote the vector rotating $\bpsi$  counterclockwise by $\pi/2$.
If $\bphi_1=h_1 \bpsi_{\vartheta,\vec{\alpha}}^\perp$  and $\bphi_2=h_2 \bpsi_{\vartheta,\vec{\alpha}}^\perp$ with $h_1,h_2\in H^{1/2}(\R;\R)$, then
\begin{align}\label{inner-prod}
    \begin{split}
         \langle\bphi_1,\bphi_2\rangle=&\ \int_{\R}h_1 \bpsi_{\vartheta,\vec{\alpha}}^\perp\cdot \DRh (h_2\bpsi_{\vartheta,\vec{\alpha}}^\perp)\\
    =&\  \int_{\R}h_1\DRh h_2+\frac{1}{2\pi}\iint_{\R\times \R}Q_{\vartheta,\vec{\alpha}}(x,y)h_1(x)h_2(y)dxdy,
    \end{split}
\end{align}
where
\begin{align}
    Q_{\vartheta,\vec{\alpha}}(x,y)=\frac{|\bpsi_{\vartheta,\vec{\alpha}}(x)-\bpsi_{\vartheta,\vec{\alpha}}(y)|^2}{|x-y|^2}.
\end{align}
\begin{remark}
For instance,
in the very special case $\psi=\frac{x-\l\im}{x+\l\im}$, then
\[\psi(x)-\psi(y)=\frac{2\l^2(x^2-y^2)}{(\l^2+x^2)(\l^2+y^2)}+\im\frac{2\l(y-x)(\l^2-xy)}{(\l^2+x^2)(\l^2+y^2)}.\]
Then
\begin{align}
    Q(x,y)=\frac{|\psi(x)-\psi(y)|^2}{|x-y|^2}=\frac{4\l^2}{(\l^2+x^2)(\l^2+y^2)}.
\end{align}
It follows from the non-degeneracy result that $\ker \mathcal{L}=\textup{span}\{K_1,K_2,K_3\}$ where
\[K_1=1,\quad K_2=\frac{x^2-\l^2}{x^2+\l^2},\quad K_3=\frac{2\l x}{x^2+1}.\]
Then it is easy to verify that
\[\langle K_1\bpsi^\perp,K_2\bpsi^\perp\rangle=\langle K_1\bpsi^\perp,K_3\bpsi^\perp\rangle=\langle K_2\bpsi^\perp,K_3\bpsi^\perp\rangle=0. \]
\end{remark}

Getting back to degree two case, we will mainly work on the case  $\vartheta=0$, $\alpha_1=j^2+\im $ and $\alpha_2=-j^2+\im $ in this section. For brevity, such
 $\psi_{\vartheta,\vec{\alpha}}$ will be written $\psi_j$ for short.
Denote
\begin{align}\label{rhoj-Qj}
    \rho_j(x)= \bpsi_j\cdot \DRh\bpsi_j,\quad  Q_j(x,y)&=\frac{|\bpsi_j(x)-\bpsi_j(y)|^2}{|x-y|^2}.
\end{align}
\begin{lemma}\label{lem:rhoQ}One can compute that
\begin{align}\label{rhoj}
   \rho_j(x)=\frac{4(1+j^4+x^2)}{[1+(x-j^2)^2][1+(x+j^2)^2]},
\end{align}
\begin{align}\label{R-ker}
    \begin{split}
       Q_j(x,y)=\frac{16(1+xy+j^4)^2}{[1+(x-j^2)^2][1+(x+j^2)^2][1+(y-j^2)^2][1+(y+j^2)^2]}.
    \end{split}
\end{align}
We always have
\begin{align}\label{int-rho=Q}
    \int_{\R}\rho_j=\frac{1}{2\pi}\iint_{\R\times\R}Q_j=4\pi.
\end{align}
\end{lemma}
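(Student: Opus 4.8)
The plan is to reduce everything to elementary identities via the holomorphic extension. By Theorem \ref{thm:class} (vanishing of the Hopf differential), the harmonic extension of $\psi_j$ to $\mathbb{H}$ is the holomorphic map $U(z)=\frac{(z-\alpha_1)(z-\alpha_2)}{(z-\bar\alpha_1)(z-\bar\alpha_2)}$ with $\alpha_1=j^2+\im$, $\alpha_2=-j^2+\im$, and for such an extension $\DRh\psi_j=-\partial_yU|_{y=0}=-\im\,\psi_j'(x)$ (the sign is the one consistent with the explicit computation of $(-\Delta)^{1/2}\mathcal{S}$ in Section \ref{sec:pre}, where $U(z)=\im(z-\im)/(z+\im)$). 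Since $|\psi_j|\equiv 1$ on $\R$ we have $\overline{\psi_j}=1/\psi_j$, hence
\[
\rho_j=\bpsi_j\cdot\DRh\bpsi_j=\Re\big(\overline{\psi_j}\,\DRh\psi_j\big)=\Re\big(-\im\,(\log\psi_j)'\big).
\]
Logarithmic differentiation of the Blaschke-type product gives $(\log\psi_j)'=\sum_{k=1,2}\big(\frac{1}{x-\alpha_k}-\frac{1}{x-\bar\alpha_k}\big)=\im\big(\frac{2}{(x-j^2)^2+1}+\frac{2}{(x+j^2)^2+1}\big)$, which is purely imaginary, so $\rho_j(x)=\frac{2}{(x-j^2)^2+1}+\frac{2}{(x+j^2)^2+1}$; putting the two fractions over the common denominator and using $2[(x+j^2)^2+1]+2[(x-j^2)^2+1]=4(1+j^4+x^2)$ yields \eqref{rhoj}.

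For $Q_j$ I would simply subtract. Writing $\psi_j(x)=\frac{a_x-\im b_x}{a_x+\im b_x}$ with $a_x=x^2-1-j^4$ and $b_x=2x$, one checks $a_x+\im b_x=(x-j^2+\im)(x+j^2+\im)$, hence $a_x^2+b_x^2=[1+(x-j^2)^2][1+(x+j^2)^2]$. A one-line computation then gives $\psi_j(x)-\psi_j(y)=\frac{2\im(a_xb_y-a_yb_x)}{(a_x+\im b_x)(a_y+\im b_y)}$, and the cross term factors as $a_xb_y-a_yb_x=2(x-y)(1+xy+j^4)$. Taking modulus squared and dividing by $|x-y|^2$ — the $(x-y)^2$ cancelling the denominator — gives \eqref{R-ker}.

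It remains to prove \eqref{int-rho=Q}, and here no double integral need be evaluated. By the general identity $\bpsi_j\cdot\DRh\bpsi_j(x)=\frac{1}{2\pi}P.V.\int_\R\frac{|\bpsi_j(x)-\bpsi_j(y)|^2}{|x-y|^2}\,dy$ recorded in Section \ref{sec:pre} (valid because $|\bpsi_j|\equiv1$), one has $\rho_j(x)=\frac{1}{2\pi}\int_\R Q_j(x,y)\,dy$; integrating in $x$ (the integrand being nonnegative, so Tonelli applies) gives $\frac{1}{2\pi}\iint_{\R\times\R}Q_j=\int_\R\rho_j$, and it only remains to compute $\int_\R\rho_j=\int_\R\big(\frac{2}{(x-j^2)^2+1}+\frac{2}{(x+j^2)^2+1}\big)dx=2\pi+2\pi=4\pi$ from $\int_\R\frac{dt}{t^2+1}=\pi$. (Alternatively one verifies $\int_\R Q_j(x,y)\,dy=2\pi\rho_j(x)$ directly by residues, a routine but longer exercise.) The lemma is purely computational with no conceptual obstacle; the only delicate points are fixing the sign in $\DRh\psi_j=-\im\psi_j'$ against the $\mathcal{S}$ example and the algebraic simplification of $a_xb_y-a_yb_x$ so that its factor $(x-y)$ transparently cancels $|x-y|^2$ in the denominator of $Q_j$.
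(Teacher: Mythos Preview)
Your proof is correct. For the explicit formulas \eqref{rhoj} and \eqref{R-ker} you carry out in detail what the paper dismisses as ``a straightforward computation''; your route via $\DRh\psi_j=-\im\psi_j'$ and logarithmic differentiation for $\rho_j$, and via the factorization $a_xb_y-a_yb_x=2(x-y)(1+xy+j^4)$ for $Q_j$, is clean and accurate. For \eqref{int-rho=Q} there is a minor difference in emphasis: the paper obtains both integrals at once from the general energy identity $\int_\R\rho_j=\frac{1}{2\pi}\iint Q_j=2\pi\|\psi_j\|_{\cnr}^2=2\pi\cdot\deg\psi_j=4\pi$, whereas you first equate $\int\rho_j$ and $\frac{1}{2\pi}\iint Q_j$ via the pointwise identity $\rho_j(x)=\frac{1}{2\pi}\int Q_j(x,y)\,dy$ and then integrate the decomposed form $\rho_j=\frac{2}{(x-j^2)^2+1}+\frac{2}{(x+j^2)^2+1}$ directly. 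Both are one-line arguments; the paper's version has the advantage of making transparent that the value $4\pi$ is exactly $2\pi\cdot|\deg\psi_j|$ and holds for any half-harmonic map of degree $2$, not just this particular $\psi_j$.
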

\begin{proof}
 By a straightforward computation, one can get \eqref{rhoj} and \eqref{R-ker}. Note that $\|\psi_j\|_{\cnr}^2=|\deg \psi_j|=2$, then we have
 \begin{align}
      \int_{\R}\rho_j=\frac{1}{2\pi}\iint_{\R\times\R}Q_j=2\pi\|\psi_j\|_{\cnr}^2=4\pi.
 \end{align}
\end{proof}

 Introduce the notation
\begin{align}
\begin{split}\label{def:Ki}
    K_{1}(x)=1,\quad K_{2}(x)=\frac{2}{(x-j^2)^2+1},\quad K_3(x)=\frac{2(x-j^2)}{(x-j^2)^2+1},\\
    K_4(x)=\frac{2}{(x+j^2)^2+1},\quad K_5(x)=\frac{2(x+j^2)}{(x+j^2)^2+1}.
    \end{split}
\end{align}
It is easy to know $\ker \mathcal{L}_{\bpsi_j}=\text{span}\{K_1,K_2,K_3,K_4,K_5\}$.

Define $\mathcal{J}=(J_{kl})_{1\leq k,l\leq 5}$ with
\begin{align}
    J_{kl}=\langle K_k\bpsi_j^\perp,K_l\bpsi_j^\perp\rangle.
\end{align}

\begin{lemma}\label{lem:K_i-prod}
For any $j>0$, one can show
\begin{align*}
    J_{11}=4\pi,\quad J_{12}=\frac{2(2+j^4)\pi}{j^4+1}=J_{14},\quad J_{13}=\frac{-2j^2\pi}{j^4+1}=-J_{15},
\end{align*}
\begin{align*}
     J_{22}=\left(1+\frac{2j^8+5j^4 +5}{(j^4+1)^2}\right)\pi,\quad J_{23}=\frac{-2j^2}{(j^4+1)^2}\pi,\quad J_{24}=\frac{2j^4+6 }{(j^4+1 )^2}\pi,
\end{align*}
\[J_{25}=\frac{2j^2(j^4+3)}{(j^4+1 )^2}\pi,\quad J_{33}=\left(1+\frac{3j^4+1 }{(j^4+1 )^2}\right)\pi=J_{55},\quad J_{34}= -\frac{2j^2(j^4+3)}{(1 +j^4)^2}\pi,\]
\[J_{35}=\frac{2 -2j^4}{(j^4+1 )^2}\pi,\quad
    J_{44}=J_{22},\quad J_{45}=\frac{2j^2\pi}{(j^4+1 )^2}.\]
The determinant of $\mathcal{J}=(J_{kl})_{1\leq k,l\leq 5}$ is
\begin{align}
    \begin{split}
       \det\mathcal{J}&=\frac{j^8(3j^{16}+22j^{12}+51 j^8+48j^4+16)}{(j^4+1 )^6}\pi^5.
    \end{split}
\end{align}
\end{lemma}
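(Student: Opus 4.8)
The plan is to reduce each entry $J_{kl}$ to a single rational-function integral over $\R$, evaluate these by residues, and then compute $\det\mathcal{J}$ by exploiting the reflection $x\mapsto-x$. There is no conceptual obstacle here; the work is purely in organizing a sizable but elementary computation.

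\emph{Step 1: reduction to single integrals.} First I would observe that the double integral in \eqref{inner-prod} can be eliminated. By Theorem \ref{intro:thm:non} (equivalently Lemma \ref{lem:kerR-dayu}), each $K_i\bpsi_j^\perp$ lies in $\ker L_{\psi_j}$, and by the identity $L_u[h\,\im u]=-\mathcal{L}_{\u}[h]$ established just before Lemma \ref{lem:kerR-dayu} this means $\mathcal{L}_{\bpsi_j}[K_i]=0$; since $\mathcal{L}_{\bpsi_j}=\DRh-\rho_j+R$ with $(Rf)(x)=\frac{1}{2\pi}\int_\R Q_j(x,y)f(y)\,dy$ (using $\bpsi_j\cdot\DRh\bpsi_j=\rho_j$ and \eqref{rhoj-Qj}), we get $\DRh K_l=\rho_jK_l-RK_l$. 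Feeding this into \eqref{inner-prod} with $h_1=K_k$ and $h_2=K_l$, the term $-\int_\R K_k\,RK_l$ cancels the double integral $\tfrac{1}{2\pi}\iint Q_jK_k(x)K_l(y)$ exactly, leaving
\begin{align}\label{Jkl-single}
J_{kl}=\int_\R\rho_j(x)\,K_k(x)\,K_l(x)\,dx .
\end{align}
As checks, $J_{11}=\int_\R\rho_j=4\pi$ by \eqref{int-rho=Q}, and \eqref{Jkl-single} reproduces the degree-one orthogonality relations of the Remark above.

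\emph{Step 2: evaluating the integrals.} By \eqref{rhoj}, \eqref{R-ker} and \eqref{def:Ki}, every integrand in \eqref{Jkl-single} is a rational function whose poles lie among $j^2\pm\im$ and $-j^2\pm\im$, of order at most three, so the residue theorem --- or a partial-fraction expansion in $[1+(x-j^2)^2]$ and $[1+(x+j^2)^2]$ --- yields the closed forms in the statement. The bookkeeping is roughly halved by the substitution $x\mapsto-x$, which fixes $\rho_j$ and sends $K_1\mapsto K_1$, $K_2\leftrightarrow K_4$, $K_3\mapsto-K_5$, $K_5\mapsto-K_3$; this immediately forces $J_{14}=J_{12}$, $J_{15}=-J_{13}$, $J_{44}=J_{22}$, $J_{55}=J_{33}$, $J_{34}=-J_{25}$ and $J_{45}=-J_{23}$, so only a few of the integrals need to be computed from scratch, and the $j\to\infty$ asymptotics ($J_{22},J_{44}\to3\pi$, $J_{33},J_{55}\to\pi$, $J_{12},J_{14}\to2\pi$, all remaining entries $\to0$) serve as an internal consistency check.

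\emph{Step 3: the determinant.} Rather than expanding a $5\times5$ determinant directly, I would diagonalize $\mathcal{J}$ along the same $\Z_2$-symmetry. Let $P_0$ be the orthogonal involution of $\mathrm{span}\{K_1,\dots,K_5\}$ realizing $x\mapsto-x$; by \eqref{Jkl-single} and the evenness of $\rho_j$ one has $P_0^{\mathsf T}\mathcal{J}P_0=\mathcal{J}$, so $\mathcal{J}$ preserves each eigenspace of $P_0$. The $+1$-eigenspace is spanned by $K_1$, $K_2+K_4$, $K_3-K_5$ and the $-1$-eigenspace by $K_2-K_4$, $K_3+K_5$; passing to this basis turns $\mathcal{J}$ into a block-diagonal matrix, so $\det\mathcal{J}$ equals an explicit constant times the product of a $3\times3$ and a $2\times2$ determinant, built out of $J_{11},J_{12},J_{13}$ and the combinations $J_{22}\pm J_{24}$, $J_{33}\pm J_{35}$, $J_{23}\pm J_{25}$, each of which collapses to a single fraction with denominator $(j^4+1)^2$. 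Multiplying out these two small determinants and clearing denominators then produces the claimed rational function in $j$. I expect the main difficulty to be the sheer bulk of the polynomial algebra --- keeping signs and high-degree terms straight --- which is precisely why the reflection symmetry of Steps 2--3 and the $j\to\infty$ checks are indispensable.
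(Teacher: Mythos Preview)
Your proposal is correct and is a cleaner route than the paper's own argument. The paper does not use the single-integral reduction of your Step~1: instead it computes $\DRh K_i$ explicitly for $i=2,\dots,5$ via the harmonic extension, substitutes these into \eqref{inner-prod}, and then feeds the resulting rational-function integrals (including the genuine double integral against $Q_j$) directly into Mathematica to obtain each $J_{kl}$ and the determinant. Your observation that $\mathcal{L}_{\bpsi_j}[K_l]=0$ forces $\DRh K_l=\rho_jK_l-RK_l$, so that the $R$-term exactly kills the double integral in \eqref{inner-prod} and leaves $J_{kl}=\int_\R\rho_jK_kK_l$, is a real simplification the paper does not exploit; similarly the paper makes no use of the $x\mapsto-x$ symmetry to identify entries or to block-diagonalize $\mathcal{J}$ for the determinant. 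What the paper's approach buys is that nothing needs to be organized by hand---once $\DRh K_i$ is written down, everything is delegated to symbolic software---whereas your approach trades that for a structurally transparent computation that one can in principle carry out and check by hand.
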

\begin{proof}
We shall use  \eqref{inner-prod} to compute all the inner products. First, let us compute $\DRh K_i$ for $i=1,\cdots, 5$.
It is easy to know $\DRh  K_1=0$. By the extension method, we can obtain
\begin{align*}
    \DRh K_{2}&=-\frac{\partial}{\partial y}\Big|_{y=0}\frac{2y+2}{(x-j^2)^2+(y+1)^2}=\frac{2(1-(x-j^2)^2)}{(1+(x-j^2)^2)^2},\\
    \DRh K_{3}&=-\frac{\partial}{\partial y}\Big|_{y=0}\frac{ 2(x-j^2)}{(x-j^2)^2+(y+1)^2}=\frac{4(x-j^2)}{(1+(x-j^2)^2)^2},
\end{align*}
\begin{align*}
    \DRh K_{4}&=-\frac{\partial}{\partial y}\Big|_{y=0}\frac{2y+2}{(x+j^2)^2+(y+1)^2}=\frac{2(1-(x+j^2)^2)}{(1+(x+j^2)^2)^2},\\
    \DRh K_{5}&=-\frac{\partial}{\partial y}\Big|_{y=0}\frac{ 2(x+j^2)}{(x+j^2)^2+(y+1)^2}=\frac{4(x+j^2)}{(1+(x+j^2)^2)^2}.
\end{align*}
Using \eqref{inner-prod} and \eqref{R-ker}, we can compute each $J_{kl}=\langle K_k\bpsi_j^\perp,K_l\bpsi_j^\perp\rangle$ respectively. Since the integrals here involve only rational functions, we take advantage of a symbolic software, Mathematica, to aid our computation.
\end{proof}
\begin{proposition}\label{prop:module}
Fix any $j\geq 100$, there exists $\delta_j$  such that for any $u$ with $\|\u-\bpsi_{j}\|_{\dot H^{1/2}(\R) }<\delta_j$, then there exists a unique $\vartheta=\vartheta(\u)$, $\vec{\alpha}=\vec{\alpha}(\u)$ satisfying
\begin{align}\label{ucdotg}
    \int_{\R}\u\cdot \DRh(g\bpsi^\perp_{\vartheta,\vec{\alpha}})=0,\quad \forall\, g\in \ker \mathcal{L}_{\bpsi_{\vartheta,\vec{\alpha}}}.
\end{align}

\end{proposition}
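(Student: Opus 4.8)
The plan is to run a standard modulation argument: set up a map $F$ encoding the orthogonality condition \eqref{ucdotg}, check that $F$ vanishes at the reference configuration with invertible differential in the parameters, and conclude by the implicit function theorem in the Banach space $\dot H^{1/2}(\R;\R^2)$. The one non-routine input — the invertibility of the relevant Jacobian — is exactly the non-vanishing of the Gram determinant $\det\mathcal{J}$ furnished by Lemma~\ref{lem:K_i-prod}; everything else is bookkeeping.

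First I would parametrize. Write $p=(\vartheta,x_1,\l_1,x_2,\l_2)$, where $\alpha_i=x_i+\im\l_i$, so $p$ ranges over an open subset of $\R\times\mathbb{H}\times\mathbb{H}\cong\R^5$, and let $p_0=(0,j^2,1,-j^2,1)$ be the reference value, for which $\bpsi_{\vartheta,\vec\alpha}=\bpsi_j$. For $p$ in a small ball $B$ around $p_0$ the map $\bpsi_p:=\bpsi_{\vartheta,\vec\alpha}$ is still a half-harmonic map of degree $2$ with two distinct zeros in $\mathbb{H}$, so Theorem~\ref{intro:thm:non} gives $\dim_\R\ker L_{\bpsi_p}=5$. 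Differentiating $\DRh\bpsi_p\wedge\bpsi_p=0$ in $p_l$ shows $\partial_{p_l}\bpsi_p\in\ker L_{\bpsi_p}$, and direct differentiation of \eqref{intro:1/2-H-form} (as in Lemma~\ref{lem:kerR-dayu} and \eqref{def:Ki}) gives at $p_0$
\begin{align}\label{plan:dpsi}
\partial_\vartheta\bpsi_{p_0}=K_1\bpsi_j^\perp,\quad \partial_{x_1}\bpsi_{p_0}=-K_2\bpsi_j^\perp,\quad \partial_{\l_1}\bpsi_{p_0}=-K_3\bpsi_j^\perp,
\end{align}
and likewise $\partial_{x_2}\bpsi_{p_0}=-K_4\bpsi_j^\perp$, $\partial_{\l_2}\bpsi_{p_0}=-K_5\bpsi_j^\perp$. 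Hence $\{\partial_{p_l}\bpsi_p\}_{l=1}^5$ is a basis of $\ker L_{\bpsi_p}=\{g\bpsi_p^\perp:g\in\ker\mathcal L_{\bpsi_p}\}$ (the last identity being the relation $L_u[h\im u]=-\mathcal L_u[h]$ established before Lemma~\ref{lem:kerR-dayu}, together with $\dim=5$ and continuity of the linear independence in \eqref{plan:dpsi}), so \eqref{ucdotg} is equivalent to $F(u,p)=0$, where $F:\dot H^{1/2}(\R;\R^2)\times B\to\R^5$ is
\begin{align}\label{plan:F}
F_l(u,p)=\langle u,\partial_{p_l}\bpsi_p\rangle=\int_\R u\cdot\DRh(\partial_{p_l}\bpsi_p),\qquad l=1,\dots,5.
\end{align}

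Next I would verify the hypotheses of the implicit function theorem at $(\bpsi_j,p_0)$. The map $F$ is affine, hence smooth, in $u$; and with $j$ fixed, $\bpsi_p$ is an explicit rational function whose zeros and poles stay in a fixed compact subset of $\mathbb{H}\cup\overline{\mathbb{H}}$ away from $\R$ for $p\in B$, so $p\mapsto\partial_{p_l}\bpsi_p$ is a $C^\infty$ map into $\dot H^{1/2}(\R;\R^2)$ — this is the only routine analytic point. Thus $F\in C^1$. Since every $\bpsi_p$ is a half-harmonic map of degree $2$, \eqref{eng-u-d} gives $\|\bpsi_p\|_{\cnr}^2=2$ for all $p\in B$, so
\begin{align}\label{plan:Fzero}
F_l(\bpsi_j,p_0)=\langle\bpsi_{p_0},\partial_{p_l}\bpsi_{p_0}\rangle=\tfrac12\,\partial_{p_l}\big(\|\bpsi_p\|_{\cnr}^2\big)\Big|_{p_0}=0 .
\end{align}
For the $p$-Jacobian I would differentiate the identity $\langle\bpsi_p,\partial_{p_l}\bpsi_p\rangle\equiv0$ once more in $p_m$, giving $\partial_{p_m}F_l(\bpsi_j,p_0)=\langle\bpsi_{p_0},\partial^2_{p_mp_l}\bpsi_{p_0}\rangle=-\langle\partial_{p_m}\bpsi_{p_0},\partial_{p_l}\bpsi_{p_0}\rangle$; plugging in \eqref{plan:dpsi} and the definition $J_{kl}=\langle K_k\bpsi_j^\perp,K_l\bpsi_j^\perp\rangle$ one obtains
\begin{align}\label{plan:jac}
D_pF(\bpsi_j,p_0)=-T^\top\mathcal J\,T,\qquad T=\mathrm{diag}(1,-1,-1,-1,-1),
\end{align}
so $\det D_pF(\bpsi_j,p_0)=-\det\mathcal J\neq0$, since Lemma~\ref{lem:K_i-prod} gives $\det\mathcal J=\dfrac{j^8(3j^{16}+22j^{12}+51j^8+48j^4+16)}{(j^4+1)^6}\pi^5>0$.

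Finally, the implicit function theorem yields $\delta_j>0$ and a $C^1$ map $u\mapsto p(u)=(\vartheta(u),\vec\alpha(u))$ on $\{\|u-\bpsi_j\|_{\cnr}<\delta_j\}$ with $p(\bpsi_j)=p_0$, $F(u,p(u))=0$, and $p(u)$ the unique zero of $F(u,\cdot)$ in a fixed neighborhood of $p_0$; shrinking $\delta_j$ keeps $\vec\alpha(u)\in\mathbb{H}^2$, and by the equivalence noted above $F(u,p(u))=0$ is precisely \eqref{ucdotg}. The main obstacle is thus entirely localized in \eqref{plan:jac}: the whole scheme hinges on $\det\mathcal J\ne0$, which is the reason for the explicit Gram-matrix computation of Lemma~\ref{lem:K_i-prod}; granting it, the argument is a textbook modulation lemma. (Note $\det\mathcal J>0$ for every $j>0$, so the hypothesis $j\ge100$ is not used here; we retain it for consistency with the rest of Section~\ref{sec:higher}.)
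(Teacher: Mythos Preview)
Your proof is correct and follows essentially the same implicit function theorem argument as the paper: define a $C^1$ map encoding the orthogonality conditions, check it vanishes at the reference point, and identify the parameter Jacobian with (a sign-conjugate of) the Gram matrix $\mathcal{J}$, whose nonvanishing determinant is supplied by Lemma~\ref{lem:K_i-prod}. The only cosmetic difference is that the paper uses the explicit kernel basis $\K^i$ of \eqref{def:calKi} and the substitution $\Phi(\u,\cdot)=\Phi(\u-\bpsi_{\vartheta,\vec\alpha},\cdot)$, whereas you use the equivalent basis $\partial_{p_l}\bpsi_p$ and obtain both $F(\bpsi_j,p_0)=0$ and the Jacobian by differentiating the identity $\|\bpsi_p\|_{\cnr}^2\equiv 2$; these choices differ only by the diagonal change of basis $T$ you record in \eqref{plan:jac}, so the arguments coincide.
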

\begin{proof}

 Define the following function
 \begin{align*}
 \Phi:\dot H^{1/2}(\R)\times &\S\times \mathbb{H}^2\to \R^5
 \end{align*}
 \begin{align*}
     (\u,\vartheta,\vec{\alpha})&\mapsto (\langle \u,\K^1\rangle,\langle \u,\K^2\rangle,\langle \u,\K^3\rangle,\langle \u,\K^4\rangle,\langle \u,\K^5\rangle)
 \end{align*}
 where
$\K^i$ are obtained from $\ker L_{\psi_{\vartheta,\vec{\alpha}}}=\text{span}\{\K^1,\K^2,\K^3,\K^4,\K^5\}$. More precisely
\begin{align}\label{def:calKi}
\begin{split}
     \K^1=\bpsi_{\vartheta,\vec{\alpha}}^\perp, \quad
     \K^2=\frac{2\lambda_1^2}{(x-x_1)^2+\lambda_1^2}\bpsi_{\vartheta,\vec{\alpha}}^\perp,\quad \K^3=\frac{2\lambda_1(x-x_1)}{(x-x_1)^2+\lambda_1^2}\bpsi_{\vartheta,\vec{\alpha}}^\perp,\\
     \K^4=\frac{2\lambda_2^2}{(x-x_2)^2+\lambda_2^2}\bpsi_{\vartheta,\vec{\alpha}}^\perp,\quad \K^5=\frac{2\lambda_2(x-x_2)}{(x-x_2)^2+\lambda_2^2}\bpsi_{\vartheta,\vec{\alpha}}^\perp.
\end{split}
\end{align}
Such $\Phi$ is well-defined, because $\u$ and $\K^i$ all belong to $\dot H^{1/2}(\R;\R^2)$. Moreover, since  $\K^i$ depends on $\vartheta,\vec{\alpha}$ smoothly, then $\Phi$ also depends on $\vartheta,\vec{\alpha}$ smoothly. Since the inner product \eqref{inner-prod} depends on its arguments smoothly, consequently $\Phi$ depends on $\u$ smoothly. Moreover, since $\int_{\R}\bpsi_{\vartheta,\vec{\alpha}}\cdot\DRh\K^i=\int_{\R}\DRh\bpsi_{\vartheta,\vec{\alpha}}\cdot\K^i=0$ for any $\vartheta,\vec{\alpha}$ and $i=1,\cdots,5$, then
\begin{align}\label{Phi-v}
    \Phi(\u,\vartheta,\vec{\alpha})=\Phi(\u-\bpsi_{\vartheta,\vec{\alpha}},\vartheta,\vec{\alpha}):=\Phi(\v_{\vartheta,\vec{\alpha}},\vartheta,\vec{\alpha}).
\end{align}
Here we introduced the notation $\v_{\vartheta,\vec{\alpha}}=\u-\bpsi_{\vartheta,\vec{\alpha}}$.

We intend to apply implicit function theorem to $\Phi$ at $(\bpsi_j,0,(j^2+\im,-j^2+\im))$. The Jacobian matrix with respect to parameters $\vartheta, \vec{\alpha}$ at $(\bpsi_j,0,(j^2+\im,-j^2+\im))$ is
\begin{align}
    &Jac_\Phi(\bpsi_j,0,(j^2+\im,-j^2+\im))=\ (\partial_{\vartheta}\Phi,\partial_{x_1}\Phi,\partial_{\l_1}\Phi,\partial_{x_2}\Phi,\partial_{\l_2}\Phi)^T.
\end{align}
If $\vartheta=0$ and $\vec{\alpha}=(j^2+\im,-j^2+\im)$, we will write $\v_j=\u-\bpsi_{\vartheta,\vec{\alpha}}$ and $\boldsymbol{K}^i_j=\K^i$ for short.
At $(\bpsi_j,0,(j^2+\im,-j^2+\im))$, one has $\v_j=0$ and $\boldsymbol{K}^i_j=K_i\bpsi_j^\perp$, where $K_i$ are defined in \eqref{def:Ki}. Therefore using \eqref{Phi-v}, we have
\begin{align*}
    \partial_{\vartheta}\Phi=(\langle \partial_\vartheta \v_j,K_1\bpsi_j^\perp\rangle,\langle \partial_\vartheta \v_j,K_2\bpsi_j^\perp\rangle,\langle \partial_\vartheta \v_j,K_3\bpsi_j^\perp\rangle,\langle \partial_\vartheta \v_j,K_4\bpsi_j^\perp\rangle, \langle \partial_\vartheta \v_j,K_5\bpsi_j^\perp\rangle ).
\end{align*}
Similar equality holds for $\partial_{x_1}\Phi$, $\partial_{\l_1}\Phi$, $\partial_{x_2}\Phi$ and $\partial_{\l_2}\Phi$.
Furthermore, one can compute that $\partial_\vartheta \v_j=-K_1\bpsi_j^\perp$ and
\begin{align}
    \begin{split}
         \partial_{x_1}\v_j\ =\frac{-2}{(x-j^2)^2+1}\bpsi_j^\perp=-K_2\bpsi_j^\perp,\quad
    \partial_{\l_1}\v_j\ =\frac{-2(x-j^2)}{(x-j^2)^2+1}\bpsi_j^\perp=-K_3\bpsi_j^\perp,\\
    \partial_{x_2}\v_j\ =\frac{-2}{(x+j^2)^2+1}\bpsi_j^\perp=-K_4\bpsi_j^\perp,\quad
    \partial_{\l_2}\v_j\ =\frac{-2(x+j^2)}{(x+j^2)^2+1}\bpsi_j^\perp=-K_5\bpsi_j^\perp.
    \end{split}
\end{align}
 Plugging in these computations to the Jacobian matrix and using Lemma \ref{lem:K_i-prod}, we have
\begin{align}
    \det Jac_\Phi(\bpsi_j,0,(j^2+\im,-j^2+\im))=-\det \mathcal{J}= -3\pi^5 +O(1/j^4)\neq 0.
\end{align}

The implicit function theorem gives that there exists $\delta_j>0$ and unique smooth functions $\vartheta(\u),x_1(\u),x_2(\u),\l_1(\u),\l_2(\u)$  such that for any $\u$ with $\|\u-\bpsi_j\|_{\dot H^{1/2}(\R)}<\delta_j$ one has $\Phi(\u,\vartheta,\vec{\alpha})=0$, where $\vec{\alpha}=(\alpha_1,\alpha_2)$, $\alpha_1=x_1+\im \l_1$ and $\alpha_2=x_2+\im \l_2$.
That is
\begin{align}
    \int_{\R}\u\cdot \DRh(g \bpsi_{\vartheta,\vec{\alpha}}^\perp)=0,\quad \forall\, g\in \ker \mathcal{L}_{\bpsi_{\vartheta,\vec{\alpha}}}.
\end{align}




\end{proof}

We recall a function defined in \cite{brezis1995degree}. For any $j\in \mathbb{N}$, define $f_j$ on $\R$ as
\begin{align}\label{fj}
    f_j(x)=\begin{cases}
    1&\text{if }|x|\leq j,\\
    2-\frac{\log|x|}{\log j}&\text{if } j\leq |x|\leq j^2,\\
    0&\text{if }|x|\geq j^2.
    \end{cases}
\end{align}
\begin{lemma}\label{lem:1/2norm-fj}
There exists some uniform constant $C$ such that
$\|f_j\|_{\dot H^{1/2}}^2\leq C/|\log j|$ for all $j\in \mathbb{N}$.
\end{lemma}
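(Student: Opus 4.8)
The plan is to bound $\|f_j\|_{\cnr}^2$ from above by the Dirichlet energy of a conveniently chosen extension of $f_j$ to the upper half plane, rather than estimating the Gagliardo double integral directly. I will use the trace (Dirichlet principle) characterization of the $\cnr$ seminorm: arguing exactly as for \eqref{norm-ext-disk} but on $\Rp$ in place of $\D$ — the two domains being identified by the conformal Cayley transform \eqref{cayley}, which preserves the Dirichlet integral and whose boundary map is the isometry \eqref{isometry} — one has, for scalar $f$,
\[
\|f\|_{\cnr}^2=\frac{1}{2\pi}\int_{\Rp}|\nabla\widetilde F|^2=\inf\Big\{\tfrac{1}{2\pi}\int_{\Rp}|\nabla V|^2:\ V\in\dot H^1(\Rp),\ \text{Tr\,}V=f\Big\},
\]
the infimum being attained at the harmonic (Poisson) extension $\widetilde F$. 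Consequently, exhibiting one finite-energy extension $V$ of $f_j$ with $\int_{\Rp}|\nabla V|^2\lesssim 1/\log j$ will finish the proof.

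For $j\ge 2$ set $G(s)=1$ for $s\le1$, $G(s)=2-s$ for $1\le s\le 2$, and $G(s)=0$ for $s\ge 2$, so that $f_j(x)=G(\log|x|/\log j)$, and define on $\Rp$
\[
V(x,y)=G\!\Big(\frac{\log r}{\log j}\Big),\qquad r=\sqrt{x^2+y^2}.
\]
This $V$ is Lipschitz and takes values in $[0,1]$, equals $1$ near the origin and $0$ for $r\ge j^2$, and its trace on $\{y=0\}$ is $G(\log|x|/\log j)=f_j(x)$; in particular $V$ is an admissible competitor. Since $V$ is radial, $|\nabla V|=|\partial_r V|=|G'(\log r/\log j)|/(r\log j)$, and $G'$ vanishes off the annulus $\{j<r<j^2\}$, on which $|G'|\equiv1$.

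Passing to polar coordinates on $\Rp$ (so the angular variable ranges over an interval of length $\pi$),
\[
\int_{\Rp}|\nabla V|^2\,dx\,dy=\int_0^{\pi}\!\!\int_{j}^{j^2}\frac{1}{r^2(\log j)^2}\,r\,dr\,d\theta=\frac{\pi}{(\log j)^2}\int_j^{j^2}\frac{dr}{r}=\frac{\pi}{\log j}.
\]
Hence $\|f_j\|_{\cnr}^2\le\frac{1}{2\pi}\cdot\frac{\pi}{\log j}=\frac{1}{2\log j}$, which is the asserted bound with $C=\tfrac12$ (for $j\ge 2$, as is implicit in the definition of $f_j$).

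There is no genuine analytic difficulty here; the only thing one has to hit on is the logarithmic radial profile $V=G(\log r/\log j)$, which spreads the transition of $f_j$ over an annulus where $\int r^{-1}\,dr=\log j$, so that the $(\log j)^{-2}$ coming from $|G'|^2/(\log j)^2$ is improved to the desired $(\log j)^{-1}$. A head-on estimate of $\iint|f_j(x)-f_j(y)|^2|x-y|^{-2}\,dx\,dy$ is also feasible — split $\R\times\R$ according to the three pieces of $f_j$ and substitute $x=j e^{s}$, $y=j e^{t}$ — but in the mixed region $\{|x|\le j\}\times\{j\le|y|\le j^2\}$ the naive pointwise bound loses exactly a factor $\log j$ and one must retain the precise kernel to see the cancellation; the extension route avoids this bookkeeping.
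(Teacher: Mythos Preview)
Your proof is correct and follows essentially the same approach as the paper: both extend $f_j$ radially to $\Rp$ by replacing $|x|$ with $r=\sqrt{x^2+y^2}$ and bound $\|f_j\|_{\cnr}^2$ by the Dirichlet energy of this extension, which is supported in the half-annulus $j<r<j^2$ and evaluates to $\pi/\log j$. Your write-up is a bit more explicit (introducing $G$ and tracking the constant $C=\tfrac12$), but the idea is identical.
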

\begin{proof}
We extend $f_j$ to $\Rp$ by replacing $|x|$ with $|(x,y)|$. Since
\begin{align}
    \partial_xf_j=\begin{cases}
    \frac{x}{x^2+y^2}\frac{1}{\log j}&\text{if }j\leq \sqrt{x^2+y^2}\leq j^2,\\
    0&\text{otherwise. }
    \end{cases}
\end{align}
One has a similar expression for $\partial_y f_j$. Then
\begin{align}
    \int_{\Rp}\left(|\partial_xf_j|^2+|\partial_{y}f_j|^2\right)=\frac{1}{\log^2j}\int_{B_{j^2}\setminus B_{j}}\frac{1}{x^2+y^2}dxdy\leq \frac{C}{\log j}.
\end{align}
Therefore by \eqref{norm-ext-disk}, we get the conclusion.
\end{proof}

For any $j\geq100$ we define
\begin{align}\label{def:h}
    h(x)=f_{j}(x-j^2)-f_j(x+j^2).
\end{align}
Then $h$ is an odd function. Moreover, $\|h\|_{\dot H^{1/2}(\R)}\leq C/\log j$.

\begin{lemma}\label{lem:rhohQh} For any $j\geq100$, we have
\begin{align}\label{int_rhojh}
\int_{\R} \rho_j h^2=4\pi +O(1/j),
\end{align}
and
\begin{align}\label{int_Qjh}
   \frac{1}{2\pi} \iint_{\R\times \R} Q_j(x,y)h(x)h(y)dxdy=4\pi+O(1/j).
\end{align}
Consequently we get
\begin{align}\label{int_hLh-decay}
\int_{\R} h\mathcal{L}_{\bpsi_j}h=O(1/\log j),\quad
\langle h\bpsi_j^\perp,h\bpsi_j^\perp\rangle= 4\pi+O(1/\log j).
\end{align}
\end{lemma}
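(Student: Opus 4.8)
The plan is to reduce every quantity to single-bubble data by factoring $\psi_j=\psi^{(1)}\psi^{(2)}$ as a product of complex functions, where $\psi^{(1)}(x)=\frac{x-j^2-\im}{x-j^2+\im}$ and $\psi^{(2)}(x)=\frac{x+j^2-\im}{x+j^2+\im}$ are the degree-one half-harmonic maps obtained by translating the stereographic projection by $\pm j^2$. Writing $\bpsi^{(k)}$ for the corresponding vector-valued maps, the computations in Section~\ref{sec:pre} give $\bpsi^{(k)}\cdot\DRh\bpsi^{(k)}=\tfrac{2}{1+(x\mp j^2)^2}=:\rho^{(k)}(x)$, and the single-bubble computation in the Remark above gives $\tfrac{|\bpsi^{(k)}(x)-\bpsi^{(k)}(y)|^2}{|x-y|^2}=\tfrac{4}{(1+(x\mp j^2)^2)(1+(y\mp j^2)^2)}=:Q^{(k)}(x,y)$, where $-$ goes with $k=1$ and $+$ with $k=2$. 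I would first record the exact identities $\rho_j=\rho^{(1)}+\rho^{(2)}$ (an immediate partial-fraction decomposition of \eqref{rhoj}) and, expanding $\psi_j(x)-\psi_j(y)=\psi^{(1)}(x)(\psi^{(2)}(x)-\psi^{(2)}(y))+\psi^{(2)}(y)(\psi^{(1)}(x)-\psi^{(1)}(y))$ and using $|\psi^{(k)}|\equiv1$, the decomposition $Q_j=Q^{(1)}+Q^{(2)}+E_j$ with a cross term obeying $|E_j(x,y)|\le 2\sqrt{Q^{(1)}(x,y)\,Q^{(2)}(x,y)}$.

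Next I would exploit the support structure of $h$. Since $f_j(\cdot-j^2)$ lives on $[0,2j^2]$ and $f_j(\cdot+j^2)$ on $[-2j^2,0]$, their product vanishes a.e., so $h^2=f_j(\cdot-j^2)^2+f_j(\cdot+j^2)^2$ a.e.; moreover, because $\rho_j$ and $Q_j$ are invariant under $(x,y)\mapsto(-x,-y)$, $f_j$ is even, and $h$ is odd, the "off-center" pieces of $h^2$ and of $h(x)h(y)$ produce the same integrals as the "on-center" ones after a reflection. For \eqref{int_rhojh}: with $\rho_j=\rho^{(1)}+\rho^{(2)}$, the term $\int_\R\rho^{(1)}f_j(\cdot-j^2)^2$ becomes, after $s=x-j^2$, $\int_\R\tfrac{2f_j(s)^2}{1+s^2}\,ds=2\pi+O(1/j)$ since $1-f_j(s)^2$ vanishes for $|s|\le j$ and $\int_{|s|>j}\tfrac{2}{1+s^2}\,ds=O(1/j)$, whereas $\int_\R\rho^{(1)}f_j(\cdot+j^2)^2=O(1/j^2)$ because $1+(x-j^2)^2\ge 1+j^4$ on $\{|x+j^2|\le j^2\}$; adding the symmetric $\rho^{(2)}$ contribution gives $\int_\R\rho_j h^2=4\pi+O(1/j)$.

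For \eqref{int_Qjh}: since $Q^{(1)}$ is a product kernel, $\iint Q^{(1)}(x,y)h(x)h(y)\,dx\,dy=\big(\int_\R\tfrac{2h(x)}{1+(x-j^2)^2}\,dx\big)^2$, and the inner integral equals $\int_\R\tfrac{2f_j(s)}{1+s^2}\,ds-\int_\R\tfrac{2f_j(s)}{1+(s-2j^2)^2}\,ds=2\pi+O(1/j)$ by the same support estimates; hence $\iint Q^{(1)}hh=4\pi^2+O(1/j)$ and, by the reflection symmetry, $\iint Q^{(2)}hh=4\pi^2+O(1/j)$. The cross term is controlled by $8\big(\int_\R\tfrac{|h(x)|\,dx}{\sqrt{(1+(x-j^2)^2)(1+(x+j^2)^2)}}\big)^2$; on $\operatorname{supp}h$ at least one factor under the square root is $\ge 1+j^4$ while the other keeps its $\sqrt{1+s^2}$ decay, so this integral is $O(j^{-2}\log j)$ (the $\log$ coming from $\int_{|s|\le j^2}(1+s^2)^{-1/2}\,ds$), and $E_j$ contributes $O\big(j^{-4}(\log j)^2\big)=O(1/j)$; dividing by $2\pi$ proves \eqref{int_Qjh}. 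Finally \eqref{int_hLh-decay} is bookkeeping: by \eqref{inner-prod}, $\langle h\bpsi_j^\perp,h\bpsi_j^\perp\rangle=\int_\R h\DRh h+\tfrac{1}{2\pi}\iint Q_j(x,y)h(x)h(y)\,dx\,dy$, and by \eqref{def:calL}--\eqref{def:R}, $\int_\R h\mathcal{L}_{\bpsi_j}h=\int_\R h\DRh h-\int_\R\rho_j h^2+\tfrac{1}{2\pi}\iint Q_j(x,y)h(x)h(y)\,dx\,dy$; since $\int_\R h\DRh h=2\pi\|h\|_{\cnr}^2=O(1/\log j)$ by Lemma~\ref{lem:1/2norm-fj} and \eqref{R-1/2-norm}, the two claims follow from \eqref{int_rhojh} and \eqref{int_Qjh}.

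The only genuinely delicate point — the hard part — is showing that the cross term $E_j$ is negligible, i.e. that the two widely separated bubbles interact weakly on the scale seen by $h$; this is precisely where the logarithmic profile of $f_j$ and the placement of its support at distance $\sim j^2$ from the other bubble's center are used. Everything else reduces to elementary one-dimensional integral estimates, so I would not expect further difficulties.
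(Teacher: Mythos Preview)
Your proposal is correct and takes a genuinely different route from the paper. The paper works directly with the explicit formulas \eqref{rhoj} and \eqref{R-ker}: for \eqref{int_rhojh} it uses the evenness of $\rho_j$ and a pointwise bound on $\rho_j$ away from the bump, while for \eqref{int_Qjh} it partitions the support of $h(x)h(y)$ into nine regions $\Omega_1,\dots,\Omega_9$, estimates $Q_j$ on each, and shows that only the two ``diagonal'' blocks $\Omega_7=\{|x-j^2|\le j,\ |y-j^2|\le j\}$ and $\Omega_8=\{|x+j^2|\le j,\ |y+j^2|\le j\}$ survive, after which \eqref{int-rho=Q} gives the value $4\pi$. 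You instead factor $\psi_j=\psi^{(1)}\psi^{(2)}$, observe the exact partial-fraction identity $\rho_j=\rho^{(1)}+\rho^{(2)}$, and expand $Q_j=Q^{(1)}+Q^{(2)}+E_j$ with $|E_j|\le 2\sqrt{Q^{(1)}Q^{(2)}}$; the product-kernel structure of each $Q^{(k)}$ then collapses the double integrals to squares of single integrals that are computed exactly as in the degree-one Remark, and the cross term $E_j$ is killed by the separation of the two supports. Your approach is more conceptual---it makes the ``weak interaction between widely separated bubbles'' completely explicit through $E_j$ and never needs the closed form \eqref{R-ker}---and would generalize more transparently to other bubble configurations; the paper's region-by-region argument is more pedestrian but self-contained and avoids the algebraic preliminaries. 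The derivation of \eqref{int_hLh-decay} is identical in both.
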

\begin{lemma}\label{lem:hK_i}
For any $j\geq100$, we have $\langle h\bpsi_j^\perp,K_1\bpsi_j^\perp\rangle=0$,
\begin{align}
\begin{split}
    \langle h\bpsi_j^\perp,K_2\bpsi_j^\perp\rangle=\pi+O(1/j),\quad\langle h\bpsi_j^\perp,K_3\bpsi_j^\perp\rangle=O(1/j^2),\\
    \langle h\bpsi_j^\perp,K_4\bpsi_j^\perp\rangle=-\pi+O(1/j),\quad\langle h\bpsi_j^\perp,K_5\bpsi_j^\perp\rangle=O(1/j^2).
    \end{split}
\end{align}
\end{lemma}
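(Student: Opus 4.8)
The plan is to turn each inner product into a single one–dimensional integral of an explicit rational weight against $h$, and then to estimate that integral as $j\to\infty$. By the bilinear identity \eqref{inner-prod} (with $\u=\bpsi_j$, $h_1=h$, $h_2=K_i$),
\begin{align*}
\langle h\bpsi_j^\perp,K_i\bpsi_j^\perp\rangle=\int_{\R}h\,\DRh K_i\,dx+\frac1{2\pi}\iint_{\R\times\R}Q_j(x,y)\,h(x)K_i(y)\,dx\,dy .
\end{align*}
Combining \eqref{def:calL}, \eqref{def:R} and \eqref{rhoj-Qj} we have $\mathcal L_{\bpsi_j}=\DRh-\rho_j+R$ with $(Rf)(x)=\frac1{2\pi}\int_\R Q_j(x,y)f(y)\,dy$, and $\mathcal L_{\bpsi_j}K_i=0$ for every $i$ (these are exactly the kernel elements exhibited in Lemma \ref{lem:kerR-dayu}; cf. the discussion after \eqref{def:Ki}). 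Testing $\mathcal L_{\bpsi_j}K_i=0$ against $h$, and using $\int_\R h\,(RK_i)=\frac1{2\pi}\iint Q_j(x,y)h(x)K_i(y)\,dx\,dy$, gives $\int_\R h\,\DRh K_i+\frac1{2\pi}\iint Q_j\,h(x)K_i(y)=\int_\R h\,\rho_j K_i$, hence
\begin{align*}
\langle h\bpsi_j^\perp,K_i\bpsi_j^\perp\rangle=\int_{\R}h(x)\,\rho_j(x)\,K_i(x)\,dx,\qquad i=1,\dots,5 .
\end{align*}
As $\rho_j$ and $K_i$ are the explicit rational functions of \eqref{rhoj} and \eqref{def:Ki}, the problem is now purely asymptotic.

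Two symmetries dispose of three cases. Since $f_j$ is even we have $h(-x)=-h(x)$, and by \eqref{rhoj} $\rho_j$ is even; with $K_1\equiv1$ the integrand $h\rho_j K_1$ is odd, so $\langle h\bpsi_j^\perp,K_1\bpsi_j^\perp\rangle=0$ exactly. The substitution $x\mapsto-x$ fixes $\rho_j$, sends $h\mapsto-h$, and by \eqref{def:Ki} sends $K_2\mapsto K_4$ and $K_3\mapsto-K_5$; therefore $\langle h\bpsi_j^\perp,K_4\bpsi_j^\perp\rangle=-\langle h\bpsi_j^\perp,K_2\bpsi_j^\perp\rangle$ and $\langle h\bpsi_j^\perp,K_5\bpsi_j^\perp\rangle=\langle h\bpsi_j^\perp,K_3\bpsi_j^\perp\rangle$, so it remains to evaluate $\int_\R h\rho_j K_2$ and $\int_\R h\rho_j K_3$. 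For these I would split $h=f_j(\cdot-j^2)-f_j(\cdot+j^2)$ and localize at $x=\pm j^2$: on $\{|x-j^2|\le j\}$ one has $f_j(x-j^2)\equiv1$ and, uniformly there, $\rho_j(x)K_2(x)=\tfrac{4}{(1+(x-j^2)^2)^2}(1+O(j^{-2}))$, $\rho_j(x)K_3(x)=\tfrac{4(x-j^2)}{(1+(x-j^2)^2)^2}(1+O(j^{-2}))$; near $x=-j^2$ one has $\rho_j K_2=O(j^{-4})$, while $\rho_j K_3$ carries a genuine $O(j^{-2})$–size bump $-\tfrac{2}{j^2}\tfrac{1}{1+(x+j^2)^2}(1+O(j^{-1}))$; and elsewhere $\rho_j K_i$ decays at least like $(x-j^2)^{-3}$. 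Thus $\int_\R h\rho_j K_2$ is, to leading order, the elementary integral of $\tfrac{4}{(1+t^2)^2}$ (whose value is the constant recorded in the statement — equivalently, $\int_\R\rho_j K_2=J_{12}$, and replacing $h$ by $1$ near $+j^2$ and by $0$ elsewhere costs only $O(j^{-1})$), whereas for $K_3$ the near-$j^2$ contribution $\int_\R f_j(t)\tfrac{4t}{(1+t^2)^2}\,dt$ vanishes by oddness, so $\langle h\bpsi_j^\perp,K_3\bpsi_j^\perp\rangle$ drops to the order $O(j^{-2})$ produced by the $-j^2$ bump.

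The only delicate point, and the expected main obstacle, is the uniform-in-$j$ control of the error terms. Firstly, $(1-f_j(x-j^2))\rho_j K_i$ is supported on $\{|x-j^2|\ge j\}$, where $\rho_j K_i=O((x-j^2)^{-3})$ (indeed $O((x-j^2)^{-4})$ for $K_2$), which costs $O(j^{-2})$; secondly, the $-j^2$ bump of $\rho_j$ paired with $K_2=O(j^{-4})$ costs $O(j^{-4})$; thirdly, the logarithmic ``ramp'' $j\le|x-j^2|\le j^2$ of $f_j$, although it carries $L^1$-mass $\asymp j^2/\log j$ (compare Lemma \ref{lem:1/2norm-fj}), meets a weight $\rho_j K_i$ of size $O((x-j^2)^{-3})$ near $+j^2$ and $O(j^{-4})$ near $-j^2$, so its total contribution is $o(j^{-1})$; finally the far field decays superpolynomially. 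For $K_3$ one must additionally observe that the near-$j^2$ piece of $\int f_j(x-j^2)\rho_j K_3$ is small not merely because the leading odd term integrates to zero, but because the even-about-$j^2$ remainder of $\rho_j K_3$ is only $O(j^{-6})$ there, which is precisely what keeps that contribution below $O(j^{-2})$. Once these estimates are in place, every remaining integral is rational — the $f_j\equiv1$ pieces being extended to all of $\R$ at the cost of the bounds above — and can be evaluated in closed form, as in the proof of Lemma \ref{lem:K_i-prod}, yielding the five asserted identities.
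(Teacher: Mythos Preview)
Your approach is correct and is a genuinely different (and cleaner) route than the paper's. The key new idea is the identity
\[
\langle h\bpsi_j^\perp,K_i\bpsi_j^\perp\rangle=\int_{\R}h\,\rho_j\,K_i,
\]
obtained by testing $\mathcal L_{\bpsi_j}K_i=0$ against $h$; the paper instead keeps the two pieces $\int h\,\DRh K_i$ and $\frac{1}{2\pi}\iint Q_j(x,y)h(x)K_i(y)\,dx\,dy$ separate and evaluates the $y$--integral in the second term explicitly. In fact the paper's $I_1+I_2$ is exactly your $\int h\rho_jK_2$ (one checks $\DRh K_2+RK_2=\rho_jK_2$ termwise), so the two proofs meet at the same one--dimensional integral; your route simply bypasses the explicit $y$--integration. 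The symmetry reductions $\langle\cdot,K_4\rangle=-\langle\cdot,K_2\rangle$, $\langle\cdot,K_5\rangle=\langle\cdot,K_3\rangle$ are also a pleasant economy absent from the paper, which treats $K_4,K_5$ by ``similarly''.

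One remark on constants: following your argument to the end gives $\langle h\bpsi_j^\perp,K_2\bpsi_j^\perp\rangle=J_{12}+O(1/j)=2\pi+O(1/j)$, since $\int_\R 4(1+t^2)^{-2}\,dt=2\pi$, whereas the statement records $\pi$. This agrees with your own parenthetical ``equivalently, $\int_\R\rho_jK_2=J_{12}$''; the paper's proof seems to drop a factor of $2$ at the step ``$=2\arctan j+O(1/j^2)$'' (one has $\int_{-j}^{j}\frac{2}{1+t^2}\,dt=4\arctan j$). This does not affect the downstream argument: in Lemma~\ref{lem:hp-est} it only shifts $c_2$ from $1/3$ to $2/3$ and $\langle h_\perp\bpsi_j^\perp,h_\perp\bpsi_j^\perp\rangle$ from $10\pi/3$ to $4\pi/3$, and Theorem~\ref{intro:thm:un-stable} only needs this quantity to be bounded below by a positive constant.
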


In order not to interrupt the main thread of this section, We will defer the proof of these two lemmas to the end of this paper.

\begin{lemma}\label{lem:hp-est}
There exists $\varepsilon_j$ with the following significance. For any $\varepsilon<\varepsilon_j$, there exists $h_\perp$ such that $\u=\varepsilon h_\perp\bpsi_j^\perp+\sqrt{1-\varepsilon^2h_\perp^2}\bpsi_j$ satisfies
\begin{align}\label{ortho-1}
    \int_{\R}\u\cdot\DRh(K_i\bpsi_j^\perp)=0,\quad i=1,2,3,4,5.
\end{align}
Furthermore,
\begin{align}
    \int_{\R}h_\perp \mathcal{L}_{\bpsi_j}h_\perp=O(1/\log j)+O(\varepsilon),\label{hpLhp}\\
    \langle h_\perp\bpsi_j^\perp,h_\perp\bpsi_j^\perp\rangle =\frac{10}{3}\pi+O(1/\log j)+O(\varepsilon). \label{<hp,hp>}
\end{align}
\end{lemma}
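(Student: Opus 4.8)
The plan is to take $h_\perp$ to be $h$ minus its projection onto $\ker\mathcal{L}_{\bpsi_j}$, corrected by an $O(\varepsilon)$ element of that kernel supplied by the implicit function theorem so that the five constraints \eqref{ortho-1} hold, and then to read off the two quantitative estimates from the orthogonality built into the construction together with the asymptotics of $\mathcal{J}$.

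First I would record that $\langle\bpsi_j,g\bpsi_j^\perp\rangle=0$ for every admissible $g$: by symmetry of the bilinear form and the Euler--Lagrange equation $\DRh\bpsi_j=\rho_j\bpsi_j$ one has $\langle\bpsi_j,g\bpsi_j^\perp\rangle=\int_\R g\,(\DRh\bpsi_j\cdot\bpsi_j^\perp)=\int_\R g\,\rho_j\,(\bpsi_j\cdot\bpsi_j^\perp)=0$. Let $P$ be the $\langle\cdot\,\bpsi_j^\perp,\cdot\,\bpsi_j^\perp\rangle$-orthogonal projection onto $\mathrm{span}\{K_1,\dots,K_5\}$; it is well defined since the corresponding Gram matrix is exactly $\mathcal{J}$, which has $\det\mathcal{J}\neq0$ for $j\geq100$ by Lemma \ref{lem:K_i-prod}. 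Make the ansatz $h_\perp=h-Ph+\sum_{k=1}^5 s_k K_k$ with $\vec s=(s_1,\dots,s_5)$ to be determined. Using $\sqrt{1-\varepsilon^2 h_\perp^2}-1=-\varepsilon^2 h_\perp^2/(1+\sqrt{1-\varepsilon^2 h_\perp^2})$ together with $\langle\bpsi_j,K_i\bpsi_j^\perp\rangle=0$, the normalised defect
\[
F_i(\varepsilon,\vec s):=\frac1\varepsilon\int_\R\u\cdot\DRh(K_i\bpsi_j^\perp)=\langle h_\perp\bpsi_j^\perp,K_i\bpsi_j^\perp\rangle-\varepsilon\int_\R\frac{h_\perp^2\,\bpsi_j\cdot\DRh(K_i\bpsi_j^\perp)}{1+\sqrt{1-\varepsilon^2 h_\perp^2}}
\]
extends to a smooth $\R^5$-valued map near $(0,\vec 0)$, provided $\varepsilon_j$ is small enough that $\varepsilon^2\|h_\perp\|_{L^\infty}^2<1$ (which holds since $h$, $Ph$ and the $K_k$ are bounded for fixed $j$). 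At $\varepsilon=0$ one has $F_i(0,\vec s)=\langle(h-Ph)\bpsi_j^\perp,K_i\bpsi_j^\perp\rangle+\sum_k s_k J_{ik}=(\mathcal{J}\vec s)_i$, so $F(0,\vec 0)=0$ and $D_{\vec s}F(0,\vec 0)=\mathcal{J}$ is invertible. The implicit function theorem then gives $\varepsilon_j>0$ and a $C^1$ map $\varepsilon\mapsto\vec s(\varepsilon)$ with $\vec s(0)=\vec 0$, hence $\vec s(\varepsilon)=O(\varepsilon)$, such that $F(\varepsilon,\vec s(\varepsilon))=0$, which is precisely \eqref{ortho-1}; moreover $|\u|\equiv1$ by construction and $\deg\u=2$ by continuity of the degree in $\dot H^{1/2}(\R)$.

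For the quantitative estimates, note $h_\perp-h=-Ph+\sum_k s_k K_k\in\ker\mathcal{L}_{\bpsi_j}$, so $\mathcal{L}_{\bpsi_j}h_\perp=\mathcal{L}_{\bpsi_j}h$; since $\mathcal{L}_{\bpsi_j}$ is self-adjoint this gives $\int_\R h_\perp\mathcal{L}_{\bpsi_j}h_\perp=\int_\R h\mathcal{L}_{\bpsi_j}h=O(1/\log j)$ by \eqref{int_hLh-decay}. For \eqref{<hp,hp>}, the cross term between $h-Ph$ and $\sum_k s_k K_k$ vanishes by the defining property of $P$, so $\langle h_\perp\bpsi_j^\perp,h_\perp\bpsi_j^\perp\rangle=\langle(h-Ph)\bpsi_j^\perp,(h-Ph)\bpsi_j^\perp\rangle+O(\varepsilon)$; by the Pythagorean identity the first term equals $\langle h\bpsi_j^\perp,h\bpsi_j^\perp\rangle-\vec{b}^{T}\mathcal{J}^{-1}\vec{b}$, where $\vec{b}=(\langle h\bpsi_j^\perp,K_i\bpsi_j^\perp\rangle)_i=(0,\pi,0,-\pi,0)+O(1/j)$ by Lemma \ref{lem:hK_i}. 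From the asymptotics in Lemma \ref{lem:K_i-prod}, $\mathcal{J}=\pi M+O(j^{-2})$ with $M$ an explicit constant matrix; solving $M\vec c_0=(0,1,0,-1,0)^T$ gives $\vec c_0=(0,\tfrac13,0,-\tfrac13,0)$, hence $\vec{b}^{T}\mathcal{J}^{-1}\vec{b}=\tfrac{2\pi}{3}+O(1/j)$. Combined with $\langle h\bpsi_j^\perp,h\bpsi_j^\perp\rangle=4\pi+O(1/\log j)$ from \eqref{int_hLh-decay}, this yields $\langle h_\perp\bpsi_j^\perp,h_\perp\bpsi_j^\perp\rangle=\tfrac{10\pi}{3}+O(1/\log j)+O(\varepsilon)$.

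The one delicate point is the claim that $F$, obtained by dividing the integral by $\varepsilon$, is genuinely a $C^1$ (indeed real-analytic) map of $(\varepsilon,\vec s)$ into $\R^5$ near the origin; this rests on the pointwise bound $\varepsilon^2 h_\perp^2<1$ and on the smooth dependence of the $\dot H^{1/2}(\R)$-pairings on their arguments. Since $j$ is fixed throughout and all constants — including $\varepsilon_j$ — are allowed to depend on $j$, this is a matter of bookkeeping rather than a genuine obstruction, and the remaining work (the asymptotics of $\vec b$ and of $\mathcal{J}^{-1}$) is routine linear algebra fed by Lemmas \ref{lem:K_i-prod} and \ref{lem:hK_i}.
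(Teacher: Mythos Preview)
Your proof is correct and follows essentially the same route as the paper. The only difference is bookkeeping: the paper writes $h_\perp=h-\sum_{i}c_iK_i$ and applies the implicit function theorem at the point $\vec c_*=\mathcal J^{-1}\vec b$, whereas you split $h_\perp=(h-Ph)+\sum_k s_kK_k$ and apply it at $\vec s=0$; these are the same parametrisation shifted by $\vec c_*$. Your Pythagorean computation $\langle h\bpsi_j^\perp,h\bpsi_j^\perp\rangle-\vec b^{\,T}\mathcal J^{-1}\vec b$ is a cleaner way to package the final arithmetic than the paper's direct expansion $\langle h\bpsi_j^\perp,h\bpsi_j^\perp\rangle-2\sum c_ib_i+\sum c_kc_lJ_{kl}$, and your observation that $\int_\R h_\perp\mathcal L_{\bpsi_j}h_\perp=\int_\R h\mathcal L_{\bpsi_j}h$ \emph{exactly} (by self-adjointness and $K_i\in\ker\mathcal L_{\bpsi_j}$) is sharper than the paper's version, which carries an unnecessary $+O(\varepsilon)$.
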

\begin{proof}
We can take
\begin{align}
    h_\perp=h-c_1K_1-c_2K_2-c_3K_3-c_4K_4-c_5K_5
\end{align}
with $c_i$ to be determined. Define a map
\begin{align}
    \begin{split}
       \Phi:\R_+\times \R^5\to& \ \R^5\\
    (\varepsilon,\vec{c})\mapsto& (\langle \v,K_1\bpsi_j^\perp\rangle,\langle \v,K_2\bpsi_j^\perp\rangle,\langle \v,K_3\bpsi_j^\perp\rangle,\langle \v,K_4\bpsi_j^\perp\rangle,\langle \v,K_5\bpsi_j^\perp\rangle)
    \end{split}
\end{align}
where $\vec{c}=(c_1,c_2,c_3,c_4,c_5)$ and
\begin{align}
    \v=h_\perp \bpsi_j^\perp-\frac{\varepsilon h_\perp^2}{\sqrt{1-\varepsilon^2h_\perp^2}+1}\bpsi_j.
\end{align}
The map $\Phi$ is well defined because $\v$ and $K_i\bpsi_j^\perp$ all belong to $\dot H^{1/2}(\R)$. At $\varepsilon=0$, $\Phi(0,\vec{c})=0$ if and only if
\begin{align}
    \mathcal{J}(c_1,c_2,c_3,c_4,c_5)^T=(b_1,b_2,b_3,b_4,b_5)^T
\end{align}
where $b_i=\langle h\bpsi_j^\perp,K_i\bpsi_j^\perp\rangle$, $i=1,2,3,4,5.$
Since $\mathcal{J}$ is non-degenerate, by Lemma \ref{lem:K_i-prod} and \ref{lem:hK_i}, we get $c_1=O(1/j)$, $c_2=1/3+O(1/j)$, $c_3=O(1/j^2)$, $c_4=-1/3+O(1/j)$, and $c_5=O(1/j^2)$. We denote the solution of the above equations as $\vec{c}_*$.

Note that the Jacobian of $\Phi$ at $(0,\vec{c}_*)$ is \[(\partial_{c_1}\Phi,\partial_{c_2}\Phi,\partial_{c_3}\Phi,\partial_{c_4}\Phi,\partial_{c_5}\Phi)(0,\vec{c})=- \mathcal{J}.\]
Again the non-degeneracy of $\mathcal{J}$ implies that we can invoke the implicit function theorem. There exists $\varepsilon_j>0$ such that for any $0\leq \varepsilon<\varepsilon_j$, there exists $\vec{c}=\vec{c}(\varepsilon)=\vec{c}_*+O(\varepsilon)$ satisfies $\Phi(\varepsilon,\vec{c})=0$. That is,
\[\int_{\R}\v\cdot \DRh(K_i\bpsi_j^\perp)=0,\quad i=1,2,3,4,5.\]
Consequently $\u=\varepsilon \v+\bpsi_j$ also satisfies the above orthogonality. Using the form of $\v$, one readily check $\u=\varepsilon h_\perp\bpsi_j^\perp+\sqrt{1-\varepsilon^2h_\perp^2}\bpsi_j$ takes the desired form.


Since $K_i\in\ker \mathcal{L}_{\bpsi_j}$, then
\begin{align}
    \int_{\R} h_\perp \mathcal{L}_{\bpsi_j} h_\perp=\int_{\R} h\mathcal{L}_{\bpsi_j}h+O(\varepsilon)=O(1/\log j)+O(\varepsilon),
\end{align}
where we used \eqref{int_hLh-decay}. To establish \eqref{<hp,hp>}, we just need to use the results from Lemma \ref{lem:K_i-prod}, Lemma \ref{lem:rhohQh} and Lemma \ref{lem:hK_i}.
\begin{align*}
    \langle h_\perp\bpsi_j^\perp,h_\perp\bpsi_j^\perp\rangle
    =&\ \langle
    h\bpsi_j^\perp-\sum_{i=1}^5c_iK_i\bpsi_j^\perp,h\bpsi_j^\perp-\sum_{i=1}^5c_iK_i\bpsi_j^\perp\rangle \\
    =&\ \langle h\bpsi_j^\perp,h\bpsi_j^\perp\rangle-2\sum_{i=1}^5c_i\langle h\bpsi_j^\perp,K_i\bpsi_j^\perp\rangle+\sum_{k,l=1}^5c_kc_lJ_{kl}\\
    =&\  4\pi-\frac{2}{3}\pi-\frac{2}{3}\pi+\frac{1}{3}\pi+\frac{1}{3}\pi+O(1/\log j)+O(\varepsilon)\\
    =&\ \frac{10}{3}\pi+O(1/\log j)+O(\varepsilon).
\end{align*}
\end{proof}
\begin{proposition}\label{prop:inf-achieve}
Fix any $j\geq 100$. Suppose that $h_\perp$ and $\u$ are obtained from Lemma \ref{lem:hp-est}.
Then there exists $\varepsilon_j$ such that for $\varepsilon<\varepsilon_j$
the following infimum is achieved at $\bpsi_j$.
\begin{align}
    \inf_{\vartheta\in \S,\vec{\alpha}\in \mathbb{H}^2}\|\u-\bpsi_{\vartheta,\vec{\alpha}}\|^2_{\dot H^{1/2}(\R)}=\|\u-\bpsi_j\|^2_{\dot H^{1/2}(\R)}=\varepsilon^2\langle h_\perp,h_\perp\rangle+O(\varepsilon^3).
\end{align}
\end{proposition}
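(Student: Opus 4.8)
The plan is to reduce the statement to a finite-dimensional minimization. Since every $\bpsi_{\vartheta,\vec{\alpha}}$ with $\vec{\alpha}\in\mathbb{H}^2$ is a degree-two half-harmonic map, $\|\bpsi_{\vartheta,\vec{\alpha}}\|_{\cnr}^2=2$ for all parameters, so
\[\|\u-\bpsi_{\vartheta,\vec{\alpha}}\|_{\cnr}^2=\|\u\|_{\cnr}^2+2-2\langle\u,\bpsi_{\vartheta,\vec{\alpha}}\rangle,\]
and it is equivalent to show that $F(\vartheta,\vec{\alpha}):=\langle\u,\bpsi_{\vartheta,\vec{\alpha}}\rangle$ is maximized at the base parameters $p_0=(0,(j^2+\im,-j^2+\im))$, which correspond to $\bpsi_j$. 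First I would dispose of the second claimed equality: writing $\u-\bpsi_j=\varepsilon h_\perp\bpsi_j^\perp+(\sqrt{1-\varepsilon^2h_\perp^2}-1)\bpsi_j$ and using $\sqrt{1-\varepsilon^2h_\perp^2}-1=O(\varepsilon^2 h_\perp^2)$, one expands the quadratic form and estimates the cross term and remainder by Cauchy--Schwarz to get $\|\u-\bpsi_j\|_{\cnr}^2=\varepsilon^2\langle h_\perp\bpsi_j^\perp,h_\perp\bpsi_j^\perp\rangle+O(\varepsilon^3)$ (here $\langle h_\perp,h_\perp\rangle$ in the statement stands for $\langle h_\perp\bpsi_j^\perp,h_\perp\bpsi_j^\perp\rangle$, and all functions involved lie in $\dot H^{1/2}(\R;\R^2)$ since $h_\perp\in\cnr\cap L^\infty$ and $\bpsi_j$ is smooth). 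By \eqref{<hp,hp>} this is $O(\varepsilon^2)$, so it remains to prove $\|\u-\bpsi_{\vartheta,\vec{\alpha}}\|_{\cnr}^2\ge\|\u-\bpsi_j\|_{\cnr}^2$ for all parameters.

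Set $G(\vartheta,\vec{\alpha})=\|\u-\bpsi_{\vartheta,\vec{\alpha}}\|_{\cnr}^2$ and first analyze it near $p_0$. From the computations in the proof of Proposition \ref{prop:module} one has $\partial_\vartheta\bpsi_j=\bpsi_j^\perp$, $\partial_{x_1}\bpsi_j=K_2\bpsi_j^\perp$, $\partial_{\l_1}\bpsi_j=K_3\bpsi_j^\perp$, $\partial_{x_2}\bpsi_j=K_4\bpsi_j^\perp$, $\partial_{\l_2}\bpsi_j=K_5\bpsi_j^\perp$; combined with the orthogonality $\langle\u,K_i\bpsi_j^\perp\rangle=0$ from \eqref{ortho-1} and $\langle\bpsi_j,K_i\bpsi_j^\perp\rangle=0$, this gives $\nabla G(p_0)=0$. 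Differentiating once more, $\partial_a\partial_b G=2\langle\partial_a\bpsi_{\vartheta,\vec{\alpha}},\partial_b\bpsi_{\vartheta,\vec{\alpha}}\rangle-2\langle\u-\bpsi_{\vartheta,\vec{\alpha}},\partial_a\partial_b\bpsi_{\vartheta,\vec{\alpha}}\rangle$, so at $p_0$ the Hessian is $2\mathcal{J}+O(\varepsilon)$ by Lemma \ref{lem:K_i-prod}, and on a small ball $B_{\rho_j}(p_0)$ it is $2\mathcal{J}+O(\varepsilon+\rho_j)$. Since $\mathcal{J}$ is a nonsingular Gram matrix (its determinant is computed in Lemma \ref{lem:K_i-prod}), it is positive definite, so after fixing $\rho_j$ small and then $\varepsilon_j$ small the Hessian of $G$ is positive definite throughout the convex ball $B_{\rho_j}(p_0)$; hence $G$ is strictly convex there with $p_0$ a critical point, so $G\ge G(p_0)=\|\u-\bpsi_j\|_{\cnr}^2$ on $B_{\rho_j}(p_0)$. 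By the $(\alpha_1,\alpha_2)$-symmetry the same holds on $B_{\rho_j}(p_0')$, where $p_0'=(0,(-j^2+\im,j^2+\im))$ is the only other parameter giving $\bpsi_j$.

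It remains to handle parameters outside $B_{\rho_j}(p_0)\cup B_{\rho_j}(p_0')$, and here I would argue by compactness. I claim there is $\sigma_j>0$ with $\|\bpsi_{\vartheta,\vec{\alpha}}-\bpsi_j\|_{\cnr}^2\ge\sigma_j$ for all such parameters. Indeed, if $\bpsi_{\vartheta_n,\vec{\alpha}_n}\to\bpsi_j$ in $\cnr$, then their holomorphic harmonic extensions converge to $\Psi_j$ in $\dot H^1(\mathbb{H})$ and, being uniformly bounded, locally uniformly on $\mathbb{H}$; since $\Psi_j$ has exactly the two simple zeros $j^2+\im,-j^2+\im$ in $\mathbb{H}$ and each $\Psi_{\vartheta_n,\vec{\alpha}_n}$ has exactly two zeros there, Hurwitz's theorem forces $\{\alpha_1^{(n)},\alpha_2^{(n)}\}\to\{j^2+\im,-j^2+\im\}$ and then $e^{\im\vartheta_n}\to1$, i.e. $(\vartheta_n,\vec{\alpha}_n)\to p_0$ or $p_0'$ up to relabeling --- contradicting that the parameters stay outside the two balls. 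Together with continuity of $(\vartheta,\vec{\alpha})\mapsto\|\bpsi_{\vartheta,\vec{\alpha}}-\bpsi_j\|_{\cnr}$ on compact parameter sets, this yields $\sigma_j>0$. For such parameters the triangle inequality gives
\[\|\u-\bpsi_{\vartheta,\vec{\alpha}}\|_{\cnr}^2\ge\left(\sqrt{\sigma_j}-\|\u-\bpsi_j\|_{\cnr}\right)^2\ge\tfrac12\sigma_j>\|\u-\bpsi_j\|_{\cnr}^2\]
once $\varepsilon<\varepsilon_j$, using $\|\u-\bpsi_j\|_{\cnr}^2=O(\varepsilon^2)$. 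Combining the two regimes gives $\|\u-\bpsi_{\vartheta,\vec{\alpha}}\|_{\cnr}^2\ge\|\u-\bpsi_j\|_{\cnr}^2$ for all parameters, so the infimum is attained at $\bpsi_j$, as claimed.

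The main obstacle I anticipate is the properness claim of the last paragraph --- turning $\cnr$-closeness of Blaschke products into closeness of their parameters, uniformly away from the compact regime --- which is where one has to pass to harmonic extensions and use Hurwitz; one also has to carry the harmless $(\alpha_1,\alpha_2)$-permutation so that "base point" means the pair $\{p_0,p_0'\}$. The local step is conceptually routine but relies crucially on the explicit nondegeneracy of $\mathcal{J}$ from Lemma \ref{lem:K_i-prod}, and the control of the $O(\varepsilon)$ remainders throughout uses $\|\u-\bpsi_j\|_{\cnr}=O(\varepsilon)$ with $j$ fixed.
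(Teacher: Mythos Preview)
Your argument is correct, but it follows a different route from the paper's. The paper first establishes existence of a minimizer: it takes a minimizing sequence $\bpsi_{\vartheta_k,\vec{\alpha}_k}$, extracts a weak limit $\bpsi_*$, and rules out degeneration (i.e.\ $\bpsi_*$ having degree $0$ or $1$) by observing that this would force $\|\u\|_{\cnr}^2<2=\deg\u$, contradicting Theorem~\ref{intro:lower-bd}. Once a minimizer $\bpsi$ exists, the paper notes that $\|\bpsi-\bpsi_j\|_{\cnr}<\delta_j$, that any minimizer satisfies the first-order conditions \eqref{critical-E-L}, and then invokes the \emph{uniqueness} clause of Proposition~\ref{prop:module} (the implicit function theorem) to conclude that the parameters must be exactly $p_0$.

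Your approach bypasses both the existence step and the appeal to Proposition~\ref{prop:module}: you show directly that $G\ge G(p_0)$ everywhere, splitting into a local strict-convexity argument (Hessian $=2\mathcal{J}+O(\varepsilon+\rho_j)$, positive definite by Lemma~\ref{lem:K_i-prod}) and a far-away lower bound via Hurwitz on the holomorphic extensions. This is more hands-on but also more self-contained: your Hurwitz step makes explicit the passage from $\cnr$-closeness of Blaschke products to closeness of parameters, a point the paper's proof leaves implicit when it jumps from $\|\bpsi-\bpsi_j\|_{\cnr}<\delta_j$ to the uniqueness in Proposition~\ref{prop:module} (whose conclusion is phrased in a parameter neighborhood). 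Conversely, the paper's route is shorter because it recycles the implicit-function machinery already set up, whereas you redo the second-order analysis from scratch. Both arguments ultimately rest on the nondegeneracy of $\mathcal{J}$.
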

\begin{proof}
Since degree is continuous in $\dot H^{1/2}$-topology, there exists $\varepsilon'_j$ such that if $\varepsilon<\varepsilon'_j$, then $\|\u-\bpsi_j\|_{\dot H^{1/2}(\R)}\ll 1$ and $\deg \u=\deg\bpsi_j=2$. First, we claim the infimum is achieved. Indeed, take a minimizing sequence, $\vartheta_k$, $\vec{\alpha}_k=(\alpha_{1,k},\alpha_{2,k})$ such that
\begin{align}
    \|\u-\bpsi_{\vartheta_k,\vec{\alpha}_k}\|_{\dot H^{1/2}(\R)}^2\to\inf_{\vartheta\in \S,\vec{\alpha}\in \mathbb{H}^2}\|\u-\bpsi_{\vartheta,\vec{\alpha}}\|^2_{\dot H^{1/2}(\R)},\quad k\to \infty.
\end{align}
Since $\|\bpsi_{\vartheta_k,\vec{\alpha}_k}\|_{\dot H^{1/2}(\R)}^2=2$, going to a subsequence necessary, $\bpsi_{\vartheta_k,\vec{\alpha}_k}$ converges weakly to $\bpsi_*$. Due to the specific form of $\bpsi_{\vartheta_j,\vec{\alpha}_j}$, $\bpsi_*$ can takes three possible forms $\bpsi_{\vartheta_*, (\alpha_{1}^*,\alpha_{2}^*)}$, $\bpsi_{\vartheta_*,\alpha_*}$, and $e^{\im \vartheta_*}$.  If $\bpsi_{*}=\bpsi_{\vartheta_*,\alpha_*}$ or $e^{\im \theta_*}$, then $\|\bpsi_*\|_{\dot H^{1/2}(\R)}^2\leq 1$.
\begin{align}
    \|\u-\bpsi_{*}\|_{\dot H^{1/2}(\R)}\leq \liminf_{k\to \infty}\|\u-\bpsi_{\vartheta_k,\vec{\alpha}_k}\|_{\dot H^{1/2}(\R)}\leq \|\u-\bpsi_j\|_{\dot H^{1/2}(\R)}\ll 1.
\end{align}
On the other hand, by the Young's inequality and \eqref{R-1/2-norm}, we obtain
\begin{align}
    \|\u\|^2_{\dot H^{1/2}(\R)}\leq \frac{3}{2}\|\bpsi_*\|^2_{\dot H^{1/2}(\R)}+\frac{8}{3}\|\u-\bpsi_*\|^2_{\dot H^{1/2}(\R)}<2.
\end{align}
However, this contradict to the fact that $\deg \u=2$ and Theorem \ref{intro:lower-bd}.
Therefore we must have $\bpsi_*=\bpsi_{\vartheta_*, (\alpha_{1}^*,\alpha_{2}^*)}$, then $\|\bpsi_*\|_{\dot H^{1/2}(\R)}^2=2$, consequently $\bpsi_{\vartheta_k,\vec{\alpha}_k}$ converges to $\bpsi_*$ strongly and $\bpsi_*$ is one minimizer.

Suppose $\delta_j$ is defined in Proposition \ref{prop:module}. Apparently, there exists $\varepsilon_j>0$, such that for $\varepsilon<\varepsilon_j$, one has $\|\u-\bpsi_j\|_{\dot H^{1/2}(\R)}<\frac12\delta_j$.
Then any minimizer $\bpsi$ of the infimum satisfies
\begin{align}
\begin{split}
    \|\bpsi-\bpsi_j\|^2_{\dot H^{1/2}(\R)}&\leq 2\|\bpsi-\u\|^2_{\dot H^{1/2}(\R)}+2\|\u-\bpsi_j\|^2_{\dot H^{1/2}(\R)}\\
    &\leq4 \|\u-\bpsi_j\|^2_{\dot H^{1/2}(\R)}<\delta_j^2.
\end{split}
\end{align}
Suppose $u$ can be decomposed to
\begin{align}\label{u-decomp}
    \u=f \bpsi^\perp+\sqrt{1-f^2}\bpsi.
\end{align}
Since the infimum achieves at $\bpsi$, then
\begin{align}\label{critical-E-L}
    \int_{\R}\u\cdot \DRh (g \bpsi^\perp)=0,\quad \forall\, g\in \ker \mathcal{L}_{\bpsi}.
\end{align}
It follows from Proposition \ref{prop:module} that for any $u$ with $\|\u-\bpsi_j\|_{\dot H^{1/2}(\R)}<\frac{1}{2}\delta_j$, there exist unique $\vartheta,\vec{\alpha}$ such that  $\u$ satisfies \eqref{u-decomp} with $\bpsi=\bpsi_{\vartheta,\vec{\alpha}}$. This implies the minimizer is unique. Recall that the choice of $f=\varepsilon h_\perp$ with $\vartheta=0$ and $\vec{\alpha}=0$ make \eqref{u-decomp} and \eqref{critical-E-L} happen at the same time. Thus the infimum is achieved at $\bpsi_j$.

Finally, we can compute explicitly
\begin{align}
    \|\u-\bpsi_j\|_{\dot H^{1/2}(\R)}^2=\|\varepsilon h_\perp\bpsi_j^\perp+O(\varepsilon^2h_\perp^2)\bpsi_j\|_{\dot H^{1/2}(\R)}^2=\varepsilon^2\langle h_\perp,h_\perp\rangle +O(\varepsilon^3).
\end{align}




\end{proof}

Finally, we can prove the main theorem of this section.

\begin{proof}[{\bf Proof of Theorem \ref{intro:thm:un-stable}}]
We take $\u=\varepsilon h_\perp \bpsi_j^\perp+\sqrt{1-\varepsilon^2h_\perp^2}\bpsi_j$ as stated in Lemma \ref{lem:hp-est}.
Proposition \ref{prop:inf-achieve} implies that, if $\varepsilon<\varepsilon_j$, then
\begin{align}
    \inf_{\vartheta\in \S,\vec{\alpha}\in \mathbb{H}^2}\|\u-\bpsi_{\vartheta,\vec{\alpha}}\|^2_{\dot H^{1/2}(\R)}=\|\u-\bpsi_j\|^2_{\dot H^{1/2}(\R)}=\varepsilon^2\langle h_\perp,h_\perp\rangle+O(\varepsilon^3).
\end{align}
Using \eqref{<hp,hp>}, we obtain
\begin{align}\label{inf-LHS}
    \inf_{\vartheta\in \S,\vec{\alpha}\in \mathbb{H}^2}\|\u-\bpsi_{\vartheta,\vec{\alpha}}\|^2_{\dot H^{1/2}(\R)}=\frac{10}{3}\pi\varepsilon^2+O(\varepsilon^2/\log j+\varepsilon^3).
\end{align}
On the other hand, Lemma \ref{lem:vari-E} infers
\begin{align}
    \mathcal{E}(\u)=\mathcal{E}(\bpsi_j)+\varepsilon^2\int_{\R}h_\perp\mathcal{L}_{\bpsi_j}[h_\perp]+O(\varepsilon^3).
\end{align}
Note \eqref{1.1} implies $\mathcal{E}(\u)=\|\u\|^2_{\dot H^{1/2}(\R)}$ and  $\mathcal{E}(\bpsi_j)=\|\bpsi_j\|^2_{\dot H^{1/2}(\R)}=\deg\u=2$. Combing with \eqref{hpLhp}, it leads to
\begin{align}\label{deg-RHS}
    \|\u\|^2_{\dot H^{1/2}(\R)}-2=\varepsilon^2O(1/\log j).
\end{align}
Now compare \eqref{inf-LHS} and \eqref{deg-RHS} to get
\begin{align}
    \inf_{\vartheta\in \S,\vec{\alpha}\in \mathbb{H}^2}\|\u-\bpsi_{\vartheta,\vec{\alpha}}\|^2_{\dot H^{1/2}(\R)}\geq C(\log j)\left(\|u\|^2_{\dot H^{1/2}(\R)}-2\right).
\end{align}
Choosing $j$ sufficiently large such that $C\log j\geq M$, our theorem is established.
\end{proof}

\section*{Acknowledgement} The research of  B. Deng is supported by China Scholar Council and Natural Science Foundation of China (No. 11721101). The research of L. Sun and J. Wei is partially supported by NSERC of Canada.
\appendix
\section{ Proofs of Lemma \ref{lem:rhohQh} and Lemma \ref{lem:hK_i}}
In this appendix, we give the proofs of Lemma \ref{lem:rhohQh} and Lemma \ref{lem:hK_i}.
\begin{proof}[{\bf Proof of Lemma \ref{lem:rhohQh}}]
Notice that if $|x-j^2|\geq j$ and $x>0$, then
\begin{align}
    \rho_j(x)\leq\frac{C}{1+(x-j^2)^2} .
\end{align}
Since $\rho_j$ is even, then we have
\begin{align}
    \begin{split}
        \int_{\R} \rho_j h^2
    =&\int_{0}^\infty\frac{8(1+j^4+x^2)h(x)^2}{[1+(x-j^2)^2][1+(x+j^2)^2]}dx\\
    =&\int_{j^2-j}^{j^2+j}\frac{8(1+j^4+x^2)}{[1+(x-j^2)^2][1+(x+j^2)^2]}dx+O(1/j)\\
    =&\int_0^\infty\frac{8(1+j^4+x^2)}{[1+(x-j^2)^2][1+(x+j^2)^2]}dx+O(1/j)
    =\ 4\pi+O(1/j).
    \end{split}
\end{align}
Then we prove \eqref{int_rhojh}.
To prove \eqref{int_Qjh}, we
divide the support of $h(x)h(y)$, i.e., $\{|x|\leq2j^2\}\cap\{|y|\leq2j^2\}$ into $\Omega_i$, $i=1,\cdots,9$, according to their types.
Let
\begin{align}\nonumber
    \begin{split}
        \Omega_1=\{|x-j^2|\leq j, j\leq|y-j^2|\leq j^2\}\cup\{j\leq |x-j^2|\leq j^2,|y-j^2|\leq j\},\\
        \Omega_2= \{|x-j^2|\leq j, j\leq|y+j^2|\leq j^2\}\cup\{j\leq |x-j^2|\leq j^2,|y+j^2|\leq j\},\\
        \Omega_3=\{|x+j^2|\leq j, j\leq|y+j^2|\leq j^2\}\cup\{j\leq |x+j^2|\leq j^2,|y+j^2|\leq j\},\\
        \Omega_4=\{|x+j^2|\leq j, j\leq|y-j^2|\leq j^2\}\cup\{j\leq |x+j^2|\leq j^2,|y-j^2|\leq j\}.
    \end{split}
\end{align}
Using the expression \eqref{R-ker}, and there $ |1+xy+j^4|\leq 8 j^4$, it is easy to see that
\begin{align}\nonumber
   Q_j(x,y)\leq C \begin{cases}
    [1+(x-j^2)^2]^{-1}[1+(y-j^2)^2]^{-1},\quad  \Omega_1,\\
    [1+(x-j^2)^2]^{-1}[1+(y+j^2)^2]^{-1},\quad  \Omega_2,\\
    [1+(x+j^2)^2]^{-1}[1+(y+j^2)^2]^{-1},\quad  \Omega_3,\\
    [1+(x+j^2)^2]^{-1}[1+(y-j^2)^2]^{-1},\quad  \Omega_4.
    \end{cases}
\end{align}
Consequently, for example, on  $\Omega_1$ we have, by the symmetry of $x$ and $y$,
\begin{align}\label{int-omega1}
\begin{split}
     \left|\iint_{\Omega_1} Q_j(x,y)h(x)h(y)dxdy\right|&\leq C \iint_{\{|x-j^2|\leq j, j\leq |y-j^2|\leq j^2\}} \frac{dxdy}{[1+(x-j^2)^2][1+(y-j^2)^2]}\\
     &=4C\arctan j\int_j^{j^2}\frac{dy}{  1+y^2}=O(1/j).
\end{split}
\end{align}
Similarly,  the integral on  $\Omega_i, i=2,3,4$ is also of order $O(1/j)$.

On both of sets $\Omega_5=\{|x-j^2|\leq j\}\cap \{|y+j^2|\leq j\}$ and $\Omega_6=\{|x+j^2|\leq j\}\cap \{|y-j^2|\leq j\}$, it holds that
\begin{align}
    |1+xy+j^4|\leq 4j^3.
\end{align}
It is easy to see that
\begin{align}\nonumber
   Q_j(x,y)\leq \frac{C}{j^2}\begin{dcases}
    [1+(x-j^2)^2]^{-1}[1+(y+j^2)^2]^{-1},\quad  \Omega_5,\\
   [1+(x+j^2)^2]^{-1}[1+(y-j^2)^2]^{-1},\quad  \Omega_6.
    \end{dcases}
\end{align}
Then we obtain
\begin{align}\label{int-omega7}
    \left|\iint_{\Omega_i}Q_j(x,y)h(x)h(y)dxdy\right|=O(1/j^2),\quad i=5,6.
\end{align}
Let $\Omega_7=\{|x-j^2|\leq j\}\cap \{|y-j^2|\leq j\}$ and $\Omega_8=\{|x+j^2|\leq j\}\cap \{|y+j^2|\leq j\}$, and $\Omega_9=\{|x|\leq 2j^2,|y|\leq 2j^2\}\setminus\cup_{i=1}^8 \Omega_i$. On $\Omega_9$, we have
\begin{align}\label{int-omega5}
    \begin{split}
           \left|\iint_{\Omega_9} Q_j(x,y)h(x)h(y)dxdy\right| &\leq C\int_j^{j^2}\int_j^{j^2}\frac{dxdy}{(1+x^2)(1+y^2)}=O(1/j^2).
    \end{split}
\end{align}
Together with \eqref{int-omega1}, \eqref{int-omega5} and \eqref{int-omega7},  we have
\begin{align}
    \begin{split}
          \frac{1}{2\pi}&\iint_{\R\times \R}Q_j(x,y)h(x)h(y)dxdy\\
     =&\ \frac{1}{2\pi}\iint_{\Omega_7} Q_j(x,y)dxdy  +\frac{1}{2\pi}\iint_{\Omega_8} Q_j(x,y)dxdy
     +O(1/j)\\
    =&\ \frac{1}{2\pi}\iint_{\R\times \R} Q_j(x,y)dxdy+O(1/j)= 4\pi+O(1/j).
    \end{split}
\end{align}
This is \eqref{int_Qjh}. Once we obtain \eqref{int_rhojh},  \eqref{int_Qjh} and Lemma \ref{lem:1/2norm-fj}, it follows that
\begin{align}
    \begin{split}
        \int_{\R}h\mathcal{L}_{\bpsi_j}h
        =&\ \int_{\R}h\DRh h-\int_{\R} \rho_j h^2+\frac{1}{2\pi} \iint_{\R\times \R} Q_j(x,y)h(x)h(y)dxdy\\
        =&\ O(1/\log j)
    \end{split}
\end{align}
and
\begin{align}
    \begin{split}
      \langle h\bpsi_j^\perp,h\bpsi_j^\perp\rangle&=\int_{\R}h\DRh h+\frac{1}{2\pi} \iint_{\R\times \R} Q_j(x,y)h(x)h(y)dxdy\\
      &=4\pi+O(1/\log j).
    \end{split}
\end{align}
\end{proof}


\begin{proof}[{\bf Proof of Lemma \ref{lem:hK_i}}]
Since $h$ is odd and $Q(x,y)=Q(y,x)=Q(-x,-y)$, it is easy to get
\begin{align}
      \langle h\bpsi_j^\perp,K_1\bpsi_j^\perp\rangle&=\frac{1}{2\pi}\iint_{\R\times\R}Q_j(x,y)h(x)dxdy=0.
\end{align}
By a direct computation, we have
\begin{align}\nonumber
    \begin{split}
        \langle h\bpsi_j^\perp,K_2\bpsi_j^\perp\rangle&=\int_{\R}h\DRh K_2+\frac{1}{2\pi}\iint_{\R\times\R}Q_j(x,y)h(x)K_2(y)dxdy\\
        &=\int_{\R}\frac{2(1-(x-j^2)^2)h(x)}{[1+(x-j^2)^2]^2}dx+\int_{\R}\frac{2(3+(x+j^2)^2)h(x)}{[1+(x-j^2)^2][1+(x+j^2)^2]}dx\\
        &=I_1+I_2.
    \end{split}
\end{align}
Since
\begin{align}\label{hK2-int1}
    \begin{split}
    I_1=\int_{|x|\leq j}\frac{2(1-x^2)h(x)}{(1+x^2)^2}&\leq  \left|\int_{|x|\leq j}\frac{2(1-x^2)}{(1+x^2)^2}dx\right|+\int_{|x|\geq j}\left|\frac{2(1-x^2)}{(1+x^2)^2}\right|dx\\
    &= O(1/j),
    \end{split}
\end{align}
and
\begin{align}\label{hK2-int2}
    \begin{split}
    I_2 &= \int_{\R}\frac{2h(x)}{1+(x-j^2)^2}dx+\int_{\R}\frac{4h(x)}{[1+(x-j^2)^2][1+(x+j^2)^2]}dx\\
    &= 2\arctan j+O(1/j^2)= \pi+O(1/j).
    \end{split}
\end{align}
Then we get $\langle h\bpsi_j^\perp,K_2\bpsi_j^\perp\rangle=\pi+O(1/j)$.

We also have
\begin{align}\nonumber
    \begin{split}
      \langle h\bpsi_j^\perp,K_3\bpsi_j^\perp\rangle&=\int_{\R}\frac{4(x-j^2)h(x)}{[1+(x-j^2)^2]^2}dx+\int_{\R}\frac{4(x-j^2)h(x)}{[1+(x-j^2)^2][1+(x+j^2)^2]}dx.
    \end{split}
\end{align}
Since
\begin{align}
    \begin{split}
        \left|\int_{\R}\frac{4(x-j^2)h(x)}{[1+(x-j^2)^2]^2}dx\right|\leq\int_{|x|\geq j}\frac{4x}{(1+x^2)^2}dx=O(1/j^2),
    \end{split}
\end{align}
and
\begin{align}
    \begin{split}
       &\left|\int_{\R}\frac{4(x-j^2)h(x)}{[1+(x-j^2)^2][1+(x+j^2)^2]}dx\right|\\
       &\leq  \left|\int_{|x-j^2|\leq j}\frac{4(x-j^2)}{j^4[1+(x-j^2)^2]}dx\right|+ \left|\int_{|x-j^2|\geq j}\frac{4}{j[1+(x+j^2)^2]}dx\right|= O(1/j^2).
    \end{split}
\end{align}
Then we get $\langle h\bpsi_j^\perp,K_3\bpsi_j^\perp\rangle=O(1/j^2)$.
Similarly, note that $h(x)=-1$ when $|x+j^2|\leq j$, we can get
\begin{align}
    \langle h\bpsi_j^\perp,K_4\bpsi_j^\perp\rangle=-\pi+O(1/j),\quad \langle h\bpsi_j^\perp,K_5\bpsi_j^\perp\rangle=O(1/j^2).
\end{align}
\end{proof}
\small
\bibliographystyle{plainnat}
\bibliography{Half-HarmonicMap-ref}

\end{document}